\documentclass[a4paper,reqno]{amsart}

\usepackage{amsbsy}

\usepackage{amsmath}
\usepackage{paralist}
\usepackage{graphics}
\usepackage{epsfig}
\usepackage{amsfonts}
\usepackage{amssymb}
\usepackage{hyperref,cite}

\usepackage{amsthm}
\usepackage{enumerate}
\usepackage[mathscr]{eucal}
\usepackage{eqlist}
\usepackage{amsmath}

\def\poin{{Poincar$\acute{e}$ }}

\newtheorem{theorem}{Theorem}[section]
\newtheorem{lemma}[theorem]{Lemma}

\def\ifl{\iffalse }

\numberwithin{equation}{section}

\numberwithin{equation}{section}

\theoremstyle{remark}

\begin{document}
\baselineskip=1.2\baselineskip
\title[  ]
{Markov maps and quasi-conformal deformations associated to geometrically finite Kleinian groups }

\author{Shengjin Huo}
\address{Department of Mathematics, Tiangong University, Tianjin 300387, China} \email{huoshengjin@tiangong.edu.cn}

\author{Wanting Shao}
\address{Department of Mathematics, Tiangong University, Tianjin 300387, China} \email{2810140065@qq.com}


\thanks{This work was supported by Tianjin Natural Science Foundation of China (23JCYBJC00730).}

\subjclass[2010]{30F40, 32G05}

\keywords{Markov map, quasiconformal deformation,  Ruelle’s property}

\begin{abstract}
	It is known that the dynamical properties of Kleinian groups with parabolic elements are much more complex than those without parabolic elements. In this paper, we mainly study the quasi-conformal deformation properties of geometrically finite Kleinian groups with parabolic elements. We show that, for a Kleinian group $G$ with parabolic elements, there exists a subset $K$ of the limit set $\Lambda(G)$ such that the first return map $f_{K}$ is (eventually) expending. Furthermore, the expending property is invariant under quasiconformal deformations. As an application, we show that  the geometrically finite Kleinian groups have Ruelle's property. Finally, we use  eigenvalue algorithm  to
	calculate  the precise Hausdorff dimensions of Hecke groups, which serves as an example to support our results.
\end{abstract}

\maketitle

\section{Introductioin}

We say $G$ is a Kleinian group if it is a discrete isometry subgroup of the orientation preserving isometry group PSL$(2, \mathbb{C})$ which contains all the isometries of the hyperbolic $3$-space  $\mathbb{H}^{3}.$
Here  the hyperbolic $3$-space  $\mathbb{H}^{3} $ is  modeled as the upper half-space $\{(x_{1},x_{2},x_{3})\in \mathbb{R}^{3}:x_{3}>0\}$ in $\mathbb{R}^{3}$(It also can be modeled by the unit ball $\mathbb{B}^{3}$ in the subsequent sections). The boundary $\partial\mathbb{H}^{3}$ of $\mathbb{H}^{3}$ at infinity is the Riemann sphere $ \hat{\mathbb{C}}$ which is the one-point compactification of the complex plane $\mathbb{C}$  by adding the point $\infty$ at infinity.
The limit set $\Lambda (G)$ in the infinity boundary $\partial\mathbb{H}^{3}$ is the accumulation set  of the $G$-orbits of any point $z\in \mathbb{H}^{3}.$
A point $x\in \partial\mathbb{H}^{3}$ is called a conical limit point or radial limit point of $G$ if there is a sequence $(\gamma _{i})_{i\geq1}$ of elements $\gamma _{i}\in G$ such that for any $z\in \mathbb{H}^{3}$, there exists a hyperbolic line $L$ with endpoint $x$ such that the hyperbolic distance between $\gamma _{i}(z)$ and $L$ is bounded. Denote by $\Lambda_{c}(G)$ the set of all conical limit points of $G$.

An element $\gamma $ in $G$ is called parabolic if it fixes only one point $p$ in $\partial\mathbb{H}^{3}$  and the point $p$ is called a parabolic fixed point of $G.$
Denote by $\Lambda_{P}(G)$ the set of all parabolic points of $G.$ It is known that $\Lambda_{P}(G)$ is a subset of the limit set $\Lambda (G) .$
For each point $p\in \Lambda_{P}(G),$ the  stabilizer $G_{p} = \left\{ \gamma \in G : \gamma (p)=p \right\}$ is a parabolic subgroup of $G$. Similarly, an element $\gamma$ in $G$ is called loxodromic if it fixes two points in $\partial\mathbb{H}^{3}.$ In this paper, we suppose $G$ does not contain elements fixing points in $\mathbb{H}^{3}$, i.e. $G$ is torsion free.
It is known that, in a discrete subgroup, a parabolic element and a loxodromic element cannot share a common fixed point \cite{KS}. Thus, $G_{p}$ consists entirely of parabolic elements.
A parabolic fixed point $p$ is said to be a bounded parabolic fixed point if the quotient set $ (\Lambda \left ( G \right ) \setminus \left \{ p \right \})/G_{p} $  is compact. It is easy to see that $G_{p}$ is isomorphic to either $\mathbb{Z}$ or $\mathbb{Z} \oplus \mathbb{Z}.$ Taking $B$ to be any horoball about $p,$ if $G_{p}\cong \mathbb{Z},$ then $\partial B/G_{p}$ is a bi-infinite  cylinder which is homeomorphic to $\mathbb{S}^{1} \times \mathbb{R} \times (0,\infty).$ In this case $B/G_{p}$ is a $\mathbb{Z}$-cusp of rank 1.
If $G_{p} \cong \mathbb{Z}\oplus \mathbb{Z}$ then $\partial B/G_{p}$ is a torus which is homeomorphic to $\mathbb{S}^{1} \times \mathbb{S}^{1} \times (0,\infty),$ and in this case $\partial B/G_{p}$ is a $\mathbb{Z}\oplus \mathbb{Z}$-cusp of rank 2, see FIGURE 1.1.

\begin{figure}[htbp]
	\centering
	\raisebox{-10em}{\includegraphics[height=24.8em]{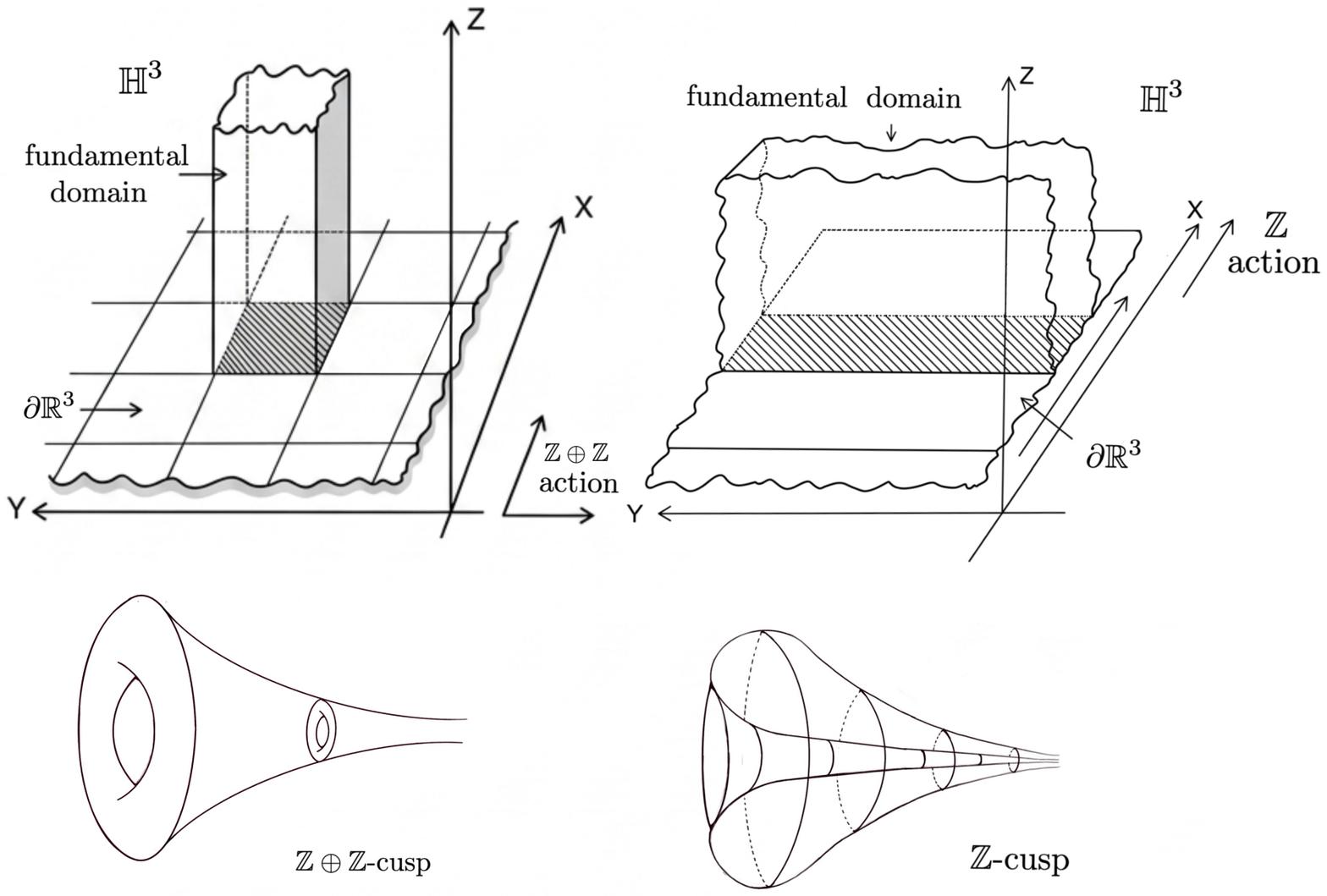}}
	\caption{1}
\end{figure}


We denote by $\text{Hull}\left (\Lambda \right )\subset \mathbb{H}^{3}$ the closed convex hull of $\Lambda \left ( G \right )  \subset \partial\mathbb{H}^{3}$, which is the smallest closed convex subset in  $\mathbb{H}^{3}$ whose accumulation set in $\partial\mathbb{H}^{3}$  is  $\Lambda \left ( G \right ) $.
The quotient $\text{Hull}\left (\Lambda \right )/G  $ is called the convex core of $G.$
A Kleinian group $G$ is convex co-compact if its convex core is compact and is co-finite if the quotient surface $\mathbb{H}^{3}/G $ has finite hyperbolic area.

The notion of geometrically finite Kleinian groups was originally introduced by Ahlfors \cite{A}, for subgroups of   PSL$(2, \mathbb{C})$ as the finiteness condition for the number of faces of a  convex fundamental polyhedron. Beardon and Maskit\cite{BM} prove that a Kleinian group $G$ of $\mathbb{H}^{3}$ is geometrically finite if and only if the limit set $\Lambda(G)$ consists entirely of conical limit points and bounded parabolic fixed points. Several equivalent definitions of geometrical finiteness have been work out, see Bowditch \cite{Bow} for more details.

%

Let $K$ be a subset of the Riemann sphere $\hat{\mathbb{C}}$. We say that a real analytic map $f$ is expanding if $|f'|_{\sigma}\geq \beta$ for any $x\in K$ and for some positive constant $\beta>1$,
where $|f'|_{\sigma}= |f'| \frac{\sigma(f)}{\sigma}$ called the conformal derivative of $f$  is measured in the spherical metric  $\sigma = 2|dx|/(1+|x|^{2})$. Similarly, we say that a  map $f$ is eventually expanding if there exists  a positive integer $N$ such that $|(f^{N})'|_{\sigma}\geq \beta>1$ for any $x\in K$.

Let $\Gamma$ be a Fuchsian group, i.e. a discrete isometry subgroup of the orientation preserving isometry group PSL$(2, \mathbb{R}).$  Let $f_{\Gamma}$ be a Markov map of $\Gamma$ (see $\S$2 for definition).  In 1979, Bowen and Series  \cite{BS} showed that  if $\Gamma$ has no parabolic elements, then the Markov map $f_{\Gamma}$ is eventually expanding. If $\Gamma$ has parabolic elements, then there is a subset $K$ of the unit circle $ \mathbb{S} ^{1} $ such that the first return map $f_{K}$ on $K$ is eventually expanding, where
$$f_{K}:K\rightarrow K, f_{K}(x)=f_{\Gamma}^{m(x)}(x), m(x)=\inf\{m>0: f_{\Gamma}^{m(x)}(x)\in K\}.$$
In 1997, Anderson and Rocha \cite{AR} extended Bowen and Series' results to co-compact Kleinian groups without parabolic elements. In this paper, we extend the results to  cases of geometrically finite Kleinian groups with parabolic elements as follows.

\begin{theorem}
	Let $G$ be a geometrically finite Kleinian group with parabolic elements and $f_{G}$ be the associated Markov map. Suppose $G$ has an even-cornered fundamental
	domain $\mathcal{F}$ and let $P$ denote the finite set of all parabolic vertices of $\mathcal{F},$ and define $\mathcal{P}:=\bigcup_{n=0}^{\infty} f_{G}^{-n}(P),$ which is the set of all parabolic fixed points of $G.$ Then for each $p\in P,$
	we can choose a sufficiently small neighborhood $U_p$ of $p$ such that the first return map $f_K$ induced by $f_{G}$ on $K$
	is eventually expanding, where
	$$K:=\Lambda(G)\setminus
	\left(
	\bigcup_{p\in P}U_p\;\cup\;\mathcal{P}
	\right)
	\subset \Lambda_c(G).$$
\end{theorem}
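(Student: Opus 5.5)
We follow the Bowen--Series scheme as adapted by Anderson and Rocha. The plan is to reduce the expansion of $f_K$ to a uniform growth estimate for the group elements that appear in long compositions of $f_G$, and to handle the parabolic vertices separately by excising the neighbourhoods $U_p$.

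Recall that on each Markov piece $I_j$ (a piece of $\Lambda(G)$ cut out by a side $s_j$ of the even-cornered domain $\mathcal{F}$), $f_G$ acts as the side-pairing $g_j$. The isometric sphere of $g_j$ is exactly the extended side $s_j$. Hence for $x\in I_j$ we have $|g_j'(x)|_\sigma\geq 1$, with equality only on the boundary of the isometric sphere. More generally, for an orbit segment $x, f_G(x),\dots,f_G^n(x)$ we have $f_G^n = g_{j_n}\cdots g_{j_1}$ near $x$.

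\textbf{Step 1: the even-corner property gives a cutting-sequence description.} This says that the word $g_{j_n}\cdots g_{j_1}$ is, up to a bounded number of letters, a geodesic in the Cayley graph coming from the tessellation by translates of $\mathcal{F}$. Equivalently, the hyperbolic geodesic from a basepoint $0\in\mathcal{F}$ toward $x$ crosses the translates $\mathcal{F}, g_{j_1}^{-1}\mathcal{F},\dots$ in a controlled way. From this we derive the standard formula
\[
|(f_G^n)'(x)|_\sigma \asymp \exp\bigl(d(0,\gamma_n^{-1}0)\bigr)\cdot(\text{bounded factor}),
\]
where $\gamma_n=f_G^n|_{\text{near }x}$, valid whenever $x$ lies in the shadow of a ball of fixed radius about $\gamma_n^{-1}0$. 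This is where conicality enters.

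\textbf{Step 2: points of $K$ return to the thick part.} By Beardon--Maskit, every point of $K\subset\Lambda(G)\setminus\mathcal{P}$ is conical. Choose the horoball neighbourhoods $B_p$ of the parabolic vertices $p\in P$ so small that their shadows are the $U_p$. By geometric finiteness, $\mathrm{Hull}(\Lambda)/G$ minus the cusp neighbourhoods is compact. For $x\in K$, each time the orbit lands back in $K$, the corresponding translate of $\mathcal{F}$ crossed by the ray to $x$ lies outside the horoballs. It therefore sits in a compact thick part, within bounded distance $D$ of the ray. Consequently the shadow condition of Step 1 holds uniformly at the return times $m(x)$, and likewise at the iterated return times.

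\textbf{Step 3: uniform expansion.} Because $G$ is discrete and only finitely many elements move $0$ less than any fixed distance, $d(0,\gamma^{-1}0)\to\infty$ along the return words. Picking $N$ with $N$ returns, the word length grows at least linearly in $N$. Since words are quasi-geodesic by the even-corner property, the displacement also grows linearly. Combining with Step 1 and the uniform bounded factor gives $|(f_K^N)'|_\sigma\geq\beta>1$ on $K$.

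\textbf{Main obstacle.} The delicate point is Step 2 near the excised parabolic regions. An orbit entering $U_p$ can spend arbitrarily long there, iterating the parabolic generator, which contracts derivatives toward $p$. We must show that on leaving $U_p$ (i.e., returning to $K$) the total derivative is still bounded below. We would check this directly in the upper half-space normalisation with $p=\infty$, where $G_p$ acts by translations. An excursion contributes a factor comparable to the ratio of heights at entry and exit of the horoball, which is bounded below by a constant depending only on the size of $U_p$. Shrinking $U_p$ makes that constant uniform. We then absorb it into the choice of $N$.
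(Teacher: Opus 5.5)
Your Step 1 formula is fine: it is the Poisson kernel, $|\gamma'(x)|_\sigma\asymp e^{d(0,\gamma^{-1}0)}$ when $x$ lies in the shadow of $B(\gamma^{-1}0,D)$. The genuine gap is Step 2, which carries the whole proof. You assert that at every return time $n$ to $K$ the point $x$ lies in the shadow of $B(\gamma_n^{-1}0,D)$, with $D$ uniform in $x$ and $n$. Nothing you write links the symbolic condition $f_G^n(x)\in K$ to the position of the tile $\gamma_n^{-1}\mathcal F$ relative to the ray $[0,x)$. The Bowen--Series coding is defined by nested half-spaces, not by the order in which the ray crosses tiles. The ray need only cross the bounding plane of the $n$-th half-space, which by the even-corner property is a union of many faces of the tessellation, and it can do so far from the face of $\gamma_n^{-1}\overline{\mathcal F}$ lying on it. Moreover $\mathcal F$ is non-compact, and $f_G^n(x)\notin\bigcup_p U_p$ constrains only the endpoint of the translated ray $[\gamma_n 0, f_G^n(x))$, not whether it meets the thick part of $\mathcal F$. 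Conicality of $x$ says that some orbit points lie near the ray, not that the Markov-coded ones at return times do. Your justification (even-corner words are quasi-geodesic in the Cayley graph, hence track the ray and have linearly growing displacement) is exactly what fails when $G$ has parabolics. For parabolic $T$ one has $d(0,T^k0)\approx 2\log k$ while the word length is $k$, so the orbit map is not a quasi-isometric embedding and word geodesics do not fellow-travel hyperbolic geodesics through cusp excursions. To close the gap you would need two things: a relatively hyperbolic Morse-type lemma (word geodesics track the ray at transition points outside horoballs), and a proof that return times to $K$ are such transition points with uniform constants. That is essentially the content of the theorem, and it is not supplied.

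Two smaller problems compound this. First, even granting the shadow condition at returns, finiteness of $\{\gamma : d(0,\gamma 0)\le T\}$ only gives divergence of the displacements for each fixed $x$. To get a single $N$ you must show that the $N$-th return displacement is large, which needs the points $\gamma_{n_i}^{-1}0$ to be distinct and to progress along a $D$-tube about the ray. Second, the opening claim that $s_j$ is the isometric sphere of $g_j$, so $|g_j'|_\sigma\ge 1$, holds for a Ford or Dirichlet domain but not for a general even-cornered one; this is why Bowen--Series obtain only eventual expansion. So you cannot use monotonicity of $|(f_G^n)'|_\sigma$ between returns. Your excursion estimate is right in conclusion, and in fact stronger than you state. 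Normalising $p=\infty$, a full excursion $T^k$ from $z$ with $|z|>R$ to $w=T^kz$ with $|w|\le R$ has spherical derivative $(1+|z|^2)/(1+|w|^2)\gtrsim |z|^2/R^2$. So the parabolic branches expand (weakly near $p$) rather than contract; this is the paper's Lemma 3.6. But you never combine it with Step 2 into a bound over $N$ consecutive returns. The paper avoids the tracking problem entirely and argues dynamically in Bowen's style:
\begin{itemize}
\item complete repelling-petal excursions give $|(f^{rm_V(z)})'(z)|_\sigma\gtrsim |z-p|_\sigma^{-1}$ via Fatou coordinates;
\item preperiodic boundary points are handled by the ratio bound of Lemma 3.5;
\item a compactness argument gives a uniform expansion time on the complement of these neighbourhoods;
\item finally it bounds how often the $f$-orbit meets $K$ within each expanding block.
\end{itemize}
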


In the theory of Kleinian groups, one of the main themes is to study the dimensional properties of the limit sets of the associated Kleinian groups under quasi-conformal deformations.
Ahlfors' finiteness Theorem implies that the limit set of a geometrically finite Kleinian group $G$ is either the whole Riemann sphere $\partial\mathbb{H}^{3}=\hat{\mathbb{C}}$  or a closed subset of $\hat{\mathbb{C}}$. This partitions the  Riemann sphere $\hat{\mathbb{C}}$ into two disjoint sets: the limit set and the regular set. Here the regular set $\Omega(G)$ is the largest open set in $\hat{\mathbb{C}}$ on which $G$ acts properly discontinuously. In this paper, we mainly focus on the case of the geometrically finite Kleinian groups with $\Omega(G)\neq\emptyset$.

Suppose $\mu$ is a bounded measurable function on $\Omega(G)$ with $\|\mu(z)\|_{\infty}<1.$
We say $\mu$ is a $G$-invariant Beltrami coefficient  of $G$ if $\mu(z)=\mu(\gamma(z))\overline{\gamma'(z)}/\gamma'(z)$ for every $\gamma\in G $ and a.e. $z\in \Omega \left ( G \right ) .$
We denote by $\mathrm {M} \left ( G \right )$ the set of all  $G$-invariant Beltrami coefficients. For each $\mu\in \mathrm {M}(G)$, there is exactly unique solution of the Beltrami equation $g_{\bar{z}}=\mu g_{z}$ fixing $0,$ $1$, and $\infty,$ denoted by $g^{\mu}.$ Such a quasiconformal mapping $g^{\mu}$ is called being compatible with the group $G.$ We call $G_{\mu}=g^{\mu}\circ G \circ (g^{\mu})^{-1}$ a quasiconformal deformation group of  Kleinian group $G$. Note that the condition $\mu(z)=\mu(\gamma(z))\overline{\gamma'(z)}/\gamma'(z)$ for every $\gamma\in G $  is exactly the condition needed to assert that $g^{\mu}\circ \gamma \circ (g^{\mu})^{-1}$ is again an element of  PSL$(2, \mathbb{C}),$ hence the deformation group $G_{\mu}$ is also a subgroup of PSL$(2, \mathbb{C})$.  Two elements $\mu$ and $\tau$ in $\mathrm {M} \left ( G \right )$ are called equivalent, denoted $\mu\sim \tau$, if $g^{\mu}$ and  $g^{\tau}$  are equal on the limit set $\Lambda(G).$ Denote the equivalence class of $\mu\in \mathrm {M} \left ( G \right )$ by $[\mu]$. The quotient space of $\mathrm {M} \left ( G \right )$
by this equivalence relation is the deformation space $\mathcal{T}(G)$ which is also called the Teichm\"{u}ller space of $G$.

By Theorem 1.1, we know that the map $f_{K}$ is (eventually) expanding. In the following, we will shown that, under quasiconformal conjugation-deformations, the map $f_{K}^{\mu } =g^{\mu}\circ f_{K} \circ (g^{\mu})^{-1} $ is also (eventually) expanding.

\begin{theorem}The map $f_{K}^{\mu } $ is (eventually) expanding.
	
\end{theorem}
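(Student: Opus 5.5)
We want to show that $f_K^{\mu}=g^{\mu}\circ f_K\circ(g^{\mu})^{-1}$ is eventually expanding on $K^{\mu}:=g^{\mu}(K)\subset\Lambda(G_\mu)$. The point is that $f_K^{\mu}$ is again a first return map of a Markov map built from elements of the deformed group $G_\mu$, so it is real analytic (piecewise Möbius).

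Concretely, on each Markov piece $f_G$ coincides with a generator $\gamma_i$ (a side-pairing of $\mathcal F$). Hence $f_K$ restricted to a cylinder of first-return time $m$ is a word $\gamma=\gamma_{i_m}\cdots\gamma_{i_1}\in G$. Consequently $f_K^{\mu}$ on the corresponding piece of $K^\mu$ equals $\gamma^{\mu}=g^{\mu}\gamma(g^{\mu})^{-1}\in G_\mu\subset \mathrm{PSL}(2,\mathbb C)$. So $(f_K^{\mu})^n$ is locally a Möbius map, and its conformal derivative $|((f_K^\mu)^n)'|_\sigma$ makes sense.

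The strategy is to turn the expansion statement into a statement about images of small sets, which quasiconformal maps respect. Quasiconformality does not control derivatives pointwise, but it does control diameters. The steps are as follows.

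First, I will use Theorem 1.1 and Koebe-type bounded distortion for Möbius maps. By eventual expansion, $|(f_K^{nN})'|_\sigma\ge\beta^n$ on $K$. Fix a Markov cylinder $I$ of generation $n$ for $f_K$, i.e. a maximal set on which $f_K^{nN}$ is a single Möbius map $\gamma_I$ mapping $I$ onto a first-generation set of definite size. Since $\gamma_I$ extends univalently to a definite spherical neighbourhood of $I$ (its pole lies outside, by the even-cornered/Markov structure and the fact that $K$ avoids $U_p$), Koebe distortion gives $\operatorname{diam}_\sigma(I)\asymp|\gamma_I'|_\sigma^{-1}\le C\beta^{-n}$. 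Thus the cylinder diameters decay exponentially.

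Second, I will transfer this to the deformed side. $g^\mu$ is quasiconformal on $\hat{\mathbb C}$, hence quasisymmetric and uniformly Hölder with exponent $\alpha=1/K(\mu)$ in the spherical metric (Mori's theorem). Therefore
\[
\operatorname{diam}_\sigma(g^\mu(I))\le C'\operatorname{diam}_\sigma(I)^{\alpha}\le C''\beta^{-\alpha n}.
\]
The sets $g^\mu(I)$ are exactly the cylinders of $f_K^{\mu}$. By the first step they are mapped by $\gamma_I^\mu$ onto images $g^\mu(\gamma_I(I))$ whose diameters are bounded below by a constant, since there are finitely many first-generation pieces. The Möbius map $\gamma^\mu_I$ again extends univalently to a definite neighbourhood $g^\mu(V)$ of $g^\mu(I)$, and quasisymmetry preserves the "definite modulus" of the annulus $V\setminus I$. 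So Koebe distortion applies on the deformed side and gives
\[
|(\gamma_I^\mu)'|_\sigma\asymp \frac{\operatorname{diam}_\sigma g^\mu(\gamma_I I)}{\operatorname{diam}_\sigma g^\mu(I)}\ge c\,\beta^{\alpha n}
\]
uniformly on $g^\mu(I)\cap K^\mu$. Choosing $n$ with $c\beta^{\alpha n}>1$ gives eventual expansion of $f_K^\mu$ with exponent $nN$.

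The main obstacle is the uniform Koebe/bounded-distortion step, on both sides. It requires that each inverse branch of $f_K^{nN}$ extends univalently to a neighbourhood of uniform modulus around every cylinder. Near parabolic vertices this can fail, because cylinders accumulating at $p$ have branches with long parabolic words. That is exactly why Theorem 1.1 removes $U_p$ and $\mathcal P$. I will check that after this removal the first-return branches are compositions whose poles stay a definite spherical distance from $K$, using that $K$ is compact inside $\Lambda_c(G)$ and the finiteness of the Markov partition. I will also check that the modulus bound survives $g^\mu$, which holds because quasiconformal maps distort moduli by at most the factor $K(\mu)$.
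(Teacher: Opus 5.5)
Your proposal is correct in outline and has the same architecture as the paper's proof. The branches of the iterates of $f_K^{\mu}$ are M\"obius elements of $G_\mu$. Their inverse branches $T^\mu_{j,n}$ are studied on fixed neighbourhoods of the finitely many terminal pieces $g^\mu(B_\omega)$, $\omega\in\Omega_K$. An analytic estimate turns $|(T^\mu_{j,n})'|$ into a diameter, and the shrinking of diameters is carried through $g^\mu$.

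The ingredients differ, however.
\begin{itemize}
\item The paper uses the one-sided Cauchy estimate $|(T^\mu_{j,n})'(y)|\le L_\mu\operatorname{diam}T^\mu_{j,n}(D^\mu_{t(j)})$, with $L_\mu<\infty$ because $\Omega_K$ is finite. It gets $\sup_j\operatorname{diam}T^\mu_{j,n}(D^\mu_{t(j)})\to 0$ from Rocha's uniform shrinking of cylinders ($\Delta_n\to0$) plus only the uniform continuity of $g^\mu$. Its proof does not actually use Theorem 1.1, and it gives no rate.
\item You instead use Theorem 1.1, two-sided Koebe distortion before and after the deformation, and Mori's H\"older bound in place of continuity. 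This buys an explicit rate $|((f_K^\mu)^{nN})'|_\sigma\gtrsim\beta^{n/K(\mu)}$.
\item The price is that you need uniform bounded distortion of the branches on both sides. By Koebe, this follows from the same uniform smallness of $T_{j,n}(D_\omega)$ that the paper needs.
\end{itemize}

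Two premises in your sketch of the ``main obstacle'' are false and should be replaced.
\begin{itemize}
\item The Markov partition $\mathcal A$ of $f_K$ is countably infinite (this is what Lemma 4.3 is about), so there are not finitely many first-generation pieces. What is finite is the family of images $f_K^n(E_j)=B_{t(j)}$ with $t(j)\in\Omega_K$. That is all your lower bound on $\operatorname{diam}_\sigma g^\mu(\gamma_I(I))$ requires.
\item $K$ is not compact, since $\mathcal P$ is dense in $\Lambda(G)$. So you cannot get the definite neighbourhoods from ``$K$ compact inside $\Lambda_c(G)$''.
\end{itemize}
The correct formulation is this: for fixed neighbourhoods $D_\omega$ of the finitely many sets $\overline{B_\omega}$, one has $\sup_{|j|=n}\operatorname{diam}T_{j,n}(D_{t(j)})\to0$. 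This must come from the nesting of the Bowen--Series/Rocha regions $I(\gamma_0,\dots,\gamma_m)\subset\hat{\mathbb C}$, which are regions of the sphere and not just subsets of $\Lambda(G)$. It is exactly the point where the paper also appeals to Rocha's result.
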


It is well known that whenever $\{\mu_{t}\}$ is an analytic family of Beltrami coefficients with respect to the parameter $t$, the family of quasiconformal maps $\{g^{\mu_{t}}\}$ is also  analytic with respect to $t.$ Hence, $G_{\mu_{t}}$ is an analytic family of quasiconformal deformations of $G.$ An element $\mu$ in $\mathrm {M}(G)$ also induces a discrete faithful representation $\rho ^{\mu }  : G \longrightarrow PSL_2(\mathbb{C}).$
The group representation $[\mu_{t}] \mapsto \rho^{\mu_{t}}$ is also real analytic on the deformation space $\mathcal{T}(G).$ We say that a discrete group $G$ has Ruelle's property if whenever $G_{\mu_{t}}$ is an analytic family of quasiconformal deformations of $G,$ then dim$_{H}\Lambda (G_{\mu_{t}})$ is an analytic function of $t,$ where dim$_{H}(\Lambda)$ denotes the Hausdorff dimension of  the set.

There is a long history to study Ruelle's property of Fuchsian groups, we do not recall here.  The readers can see \cite{HZ} and the references therein for more details.  In the theory of complex dynamic system, there are also many researchers to study Ruelle's property of analytic functions, see \cite{MA}.  For Kleinian groups, Anderson and Rocha \cite{AR} proved  that the convex cocompact Kleinian groups have Ruelle's property. Bridgeman et al. \cite{MR} showed that the Hausdorff dimension of the limit set varies analytically over analytic families of convex cocompact representations into any rank-one semisimple Lie group. Dey and Hee Oh \cite{HD} proved that, for any $\theta$-Anosov Zariski-dense subgroup of a higher-rank connected semisimple real algebraic group, the Hausdorff dimension of the limit set also varies analytically. In the presence of parabolic elements, the first author \cite{HZ} proved  that every finitely generated Fuchsian group has Ruelle's property. Bray et al. \cite{HRL} constructed a mapping class group-invariant pressure metric on the space $\mathrm{QF}(S)$ of quasiconformal deformations of a cofinite-area Fuchsian group uniformizing $S.$ Using this method, they also showed that for cofinite Fuchsian groups, the Hausdorff dimension of the limit set varies analytically. In this paper, we continue to study the Ruelle's property of Kleinian groups.

Recall that, for a Kleinian group G, a fundamental domain $\mathcal{F} $ for the action of $G$ on $\mathbb{H}^{3}$ is an open subset of $\mathbb{H}^{3}$ such that the various images of $\overline{\mathcal{F} }$ by the elements of $G$ cover the whole space and they are pairwise disjoint except for points on the boundary of $\overline{\mathcal{F} }$. A fundamental domain $\mathcal{F} $ is said to be an even-cornered fundamental polyhedron if $\cup_{\gamma \in G}^{}\gamma(\partial \mathcal{F} )$ is a union of geodesic planes.

By Theorem 1.1 and Theorem 1.2, we can extend Anderson and Rocha's results \cite{AR} to the case of geometrically finite Kleinian groups.

\begin{theorem}\label{main}
	Let $G$ be a geometrically finite Kleinian group which has an even-cornered fundamental polyhedron.  Then $G$ has Ruelle's property.
\end{theorem}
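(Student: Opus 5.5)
We follow the thermodynamic formalism strategy of Bowen–Series and Anderson–Rocha, replacing the Markov map $f_G$ (which fails to be expanding near parabolic points) by the induced first return map $f_K$ of Theorem 1.1.

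\textbf{Symbolic coding.} The even-cornered fundamental polyhedron $\mathcal{F}$ gives a Markov partition of $\Lambda(G)$. Restricting to $K$ and inducing, $f_K$ becomes a countable-state Markov map. Each branch of $f_K$ is a Möbius transformation of the form $\gamma$ or $\gamma\circ\pi^n$, with $\pi$ a parabolic generator at a vertex $p\in P$ and $n$ counting the excursions into $U_p$. These branches are coded by a countable alphabet, and the resulting shift has the big images and preimages property, because only finitely many Markov pieces occur. By Theorem 1.1 the map $f_K$ is eventually expanding. By Theorem 1.2 the conjugated map $f_K^{\mu_t}=g^{\mu_t}\circ f_K\circ(g^{\mu_t})^{-1}$ on $K_t=g^{\mu_t}(K)$ is also eventually expanding. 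The branches of $f_K^{\mu_t}$ are the Möbius maps $\rho^{\mu_t}(\gamma)$, and these depend real-analytically on $t$. Since the limit set differs from $K$ only by a countable set together with the $G$-images of neighborhoods of parabolic points, a standard argument gives $\dim_H\Lambda(G_{\mu_t})=\dim_H K_t$ in the geometrically finite case (Bishop–Jones, or Stratmann–Urbański).

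\textbf{Bowen's formula.} Define the potential $\phi_t=-\log|(f_K^{\mu_t})'|_\sigma\circ g^{\mu_t}$ on the common symbolic space, and set
$$P(t,s)=\lim_{n\to\infty}\frac1n\log\sum_{f_K^n(x)=x}|((f_K^{\mu_t})^n)'(g^{\mu_t}x)|^{-s}.$$
Eventual expansion makes $\phi_t$ Hölder with uniform constants. Using the graph directed Markov system theory of Mauldin–Urbański for countable alphabets, we would show that $\dim_H K_t$ is the unique zero $s=\delta(t)$ of $P(t,\cdot)$, and that $\partial_sP=-\int \log|(f^{\mu_t}_K)'|\,d\mu_{eq}<0$ by expansion.

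\textbf{Main obstacle: analyticity.} We expect the central difficulty to be proving that $P$ is real-analytic in $(t,s)$ near $(t_0,\delta(t_0))$. Because the alphabet is infinite, one must check that $\sum_a\|e^{s\phi_t}\|$ converges on a complex neighborhood. The branches $\gamma\pi^n$ have derivatives of order $n^{-2}$, so the series behaves like $\sum n^{-2s}$, which converges for $s>1/2$. For a parabolic subgroup of rank $k$ this threshold is $k/2$, and since $\delta(G)>k_{\max}/2$ for geometrically finite groups, $\delta(t_0)$ lies strictly inside the region of convergence. Analytic dependence of $\rho^{\mu_t}(\gamma)$ on $t$ makes each term holomorphic in complexified $(t,s)$, with bounds uniform in $n$ because $\rho^{\mu_t}(\pi)$ remains parabolic.

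\textbf{Conclusion.} Given these bounds, the Ruelle operator $\mathcal{L}_{t,s}$ is a holomorphic family of bounded operators on Hölder functions. It has a simple isolated leading eigenvalue $e^{P(t,s)}$, which persists under perturbation. Analytic perturbation theory (Kato) then makes $P$ analytic, and the implicit function theorem applied to $P(t,\delta(t))=0$ shows that $\delta(t)=\dim_H\Lambda(G_{\mu_t})$ is real-analytic in $t$. Hence $G$ has Ruelle's property.
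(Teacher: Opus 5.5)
Your proposal is correct and follows essentially the same route as the paper. Both arguments induce to the first-return map $f_K$ on $K$ and conjugate its countably many M\"obius inverse branches by $g^{\mu}$. Both identify the dimension as the zero of the Mauldin--Urba\'nski pressure, with the parabolic lattice sums giving the convergence threshold $k_{\max}/2$ exactly as in Lemma 4.3, and both conclude from the isolated leading eigenvalue of the transfer operator together with the implicit function theorem.

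The differences are minor. You place the zero strictly inside the convergence region using $\delta(G_\mu)>k_{\max}/2$, whereas the paper shows the system is regular via divergence of $\psi_1^\mu$ at $\theta_\mu$. You also state explicitly that $\partial_s P<0$, which the implicit function theorem requires and which the paper leaves implicit.
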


The rest of this paper is organized as follows. In section 2, we review some preliminary knowledge on Kleinian groups. In Section 3, we give  a number of results about Markov maps of the geometrically finite Kleinian groups with parabolic elements and construct a subset $K$ of $\Lambda(G)$ such that the first return map $f_{K}$ is (eventually) expanding. In Section 4, we give the proof of  Theorem 1.2 and  Theorem 1.3. Finally, in Section 5, we use  eigenvalue algorithm (see \cite{Mc2}) to
calculate the precise Hausdorff dimensions of Hecke groups, which serves as an example to support our results.

\section{Preliminaries}

\subsection{Kleinian group} Let $G$ be a Kleinian group. The \poin series of $G$ is defined  by

\begin{align}P_{s}(G,x) = \left\{\begin{matrix}\sum_{\gamma \in G}^{}e^{-sd_{\mathbb{H} } (x,\gamma x)},\quad \text{if } x\in \mathbb{H}^{3}, \\
		\\
		\textstyle \sum_{\gamma \in G} |\gamma'(x)|^{s}_{\sigma},
		\quad  \text{if } x\in \Omega(G). \end{matrix}\right.
\end{align}
where $|\gamma'|_{\sigma }$ is measured in the spherical metric $\sigma = \left | \mathrm{d}x \right | /\left ( 1+x^{2}  \right ) $ and $d_{\mathbb{H}}$ denotes the hyperbolic metric on $\mathbb{H}^3 .$ The \poin exponent (or critical exponent) is given by $$\delta(G) = \inf\{ s\ge 0 : P_{s}(G,x) < \infty \},$$ which is independent of the choice of $x$ in $\mathbb{H}^{3}\bigcup \Omega(G).$

If the Kleinian group $G$ has a fundamental domain with finitely many faces, then it is finitely generated. Furthermore, if a Fuchsian group or a Kleinian group is geometrically finite, then all of its fundamental domains  have finitely many faces, and in this case the group  is finitely generated , see \cite{AM}. For the case of Fuchsian groups, the converse is also true, i.e. every finitely generated Fuchsian group is geometrically finite. However this is false for Kleinian groups, see \cite{LG}.

Let $G$ be a geometrically finite Kleinian group and $\mathcal{F} $ be one of its fundamental domains.  Suppose $s$ is a face of $\mathcal{F} $, then there exists another face $s'$ of $F $ and an element $\gamma _{s}\in G$ with $\gamma _{s}(s)=s'.$ It is easy to see that  that $\gamma _{s'}= \gamma _{s}^{-1}.$
We call $\gamma _{s}$ is a face pairing transformation of $\mathcal{F}$,  and denoted by $G_{\mathcal{F}}$ the set of all face pairing transformations of $\mathcal{F}.$

The Poincare polyhedron Theorem (see \cite{FH})  says that a geometrically finite Kleinian group is generated by its face pairing transformations.
Hence, we can get a presentation $\langle G_{\mathcal{F} }\mid R_{\mathcal{F} }\rangle$ of the group $G,$ where $G_{\mathcal{F}}$ is a symmetric system of generators and $R_{\mathcal{F} }$ is the set of relations from each vertex and face cycles, see\cite{BM}.

\subsection{Markov map}

Just as Bourdon did in  \cite{MB},  we use the elements in $G_{\mathcal{F}}$ to label the faces of $G$ as follows.
For each $\gamma\in G_{\mathcal{F}}$, for the sake of simplicity in notation with no risk of confusion, we also  use $\gamma$ to denote the inside face of $\mathcal{F}$ associated to the element $\gamma\in G_{\mathcal{F}}$ pairing it to some other face, and denote $\bar{\gamma}$ the outside face of $\mathcal{F}$.
It is easy to see that $\bar{\gamma}$ is corresponding to $\gamma^{-1}\in G_{\mathcal{F}}.$
Let $H(\gamma)$ be the part of hyperbolic half-space in $\mathbb{H}^{3}$ with $\mathcal{F}$ in its exterior and the face $\gamma^{-1} $ of $\mathcal{F}$ on its geodesic boundary.
Then $\{ H(\gamma_{i}) : \gamma_{i}\in G_{\mathcal{F}}\}\cup \mathcal{F}$ partitions $\mathbb{H}^{3}$ into several parts.
Hence, we can use the associated faces-pairings to define a map $f$ that sends the point $x$ in $H(\gamma)$ to $\gamma^{-1}(x)$ and sends the points $x\in \mathcal{F}$  to itself.
We must point out that that this map is not well defined since  some of the half-spaces $H( \gamma _{i})$ can have a non-empty intersection for different $ \gamma _{i}\in G_{\mathcal{F}}$.
To overcome this confusion, we can  choose an order for the elements in the generator set $G_{\mathcal{F}}\cup \{Id\}$ as  $\{ \gamma _{0}=Id,  \gamma _{1},  \gamma _{2}, \cdot\cdot\cdot,  \gamma _{n}\},$  where $Id$ denotes the identity map of $\mathbb{H}^{3}.$
Then for a point $x\in \mathbb{H}^{3}$ belonging to $H( \gamma _{i_{1}})\cap H( \gamma _{i_{2}})\cap\dots\cap H( \gamma _{i_{k}}),$ we define $f(x)=  \gamma _{k}^{-1}x,$ where $ \gamma _{k}$ is the element in the set
$\{ \gamma _{i_{1}},  \gamma _{i_{2}}, \cdot\cdot\cdot,  \gamma _{i_{k}}\}$  with the least foot tag. Define, see FIGURE 2.1,
$$B( \gamma _{i}) = \{ x\in H( \gamma _{i}) : f(x)=  \gamma _{i}^{-1}x \},$$
$$I( \gamma _{i}) = \widehat{\mathbb{C}}\cap \overline{B( \gamma _{i})},$$
$$B( \gamma _{0},\dots, \gamma _{n}) = \bigcap_{r=0}^{n} f^{-r}B( \gamma _{r}),$$
$$I( \gamma _{0},\dots, \gamma _{n}) =\widehat{\mathbb{C}}\cap \overline{B( \gamma _{0},\dots, \gamma _{n})} = \bigcap_{r=0}^{n} f^{-r}I(\gamma_{r}).$$
Observe that
$$f^{n}\mid_{B(\gamma_{0}\gamma_{1}\dots \gamma_{n-1})}= \gamma_{n-1}^{-1}\dots \gamma_{0}^{-1},$$ then it is easy to see that
$$B(\gamma_{0}\dots \gamma_{n}) = (\gamma_{0}\dots \gamma_{n-1}B(\gamma_{n})) \cap B(\gamma_{0}\dots \gamma_{n-1}).$$
By Theorem 2.2 in \cite{R}, we know that the diameters $\operatorname{diam} I(\gamma_{0},\dots,\gamma_{n})$ tend $ 0$ as $n\to 0$, which implies that $I(\gamma_{0},\dots,\gamma_{n},\dots)$ contains only a single point. Here and the following, $\operatorname{diam}(\cdot)$ always denote the Euclidean diameters of the associated sets.

\begin{figure}[htbp]
	\centering
	\raisebox{-10em}{\includegraphics[height=24.8em]{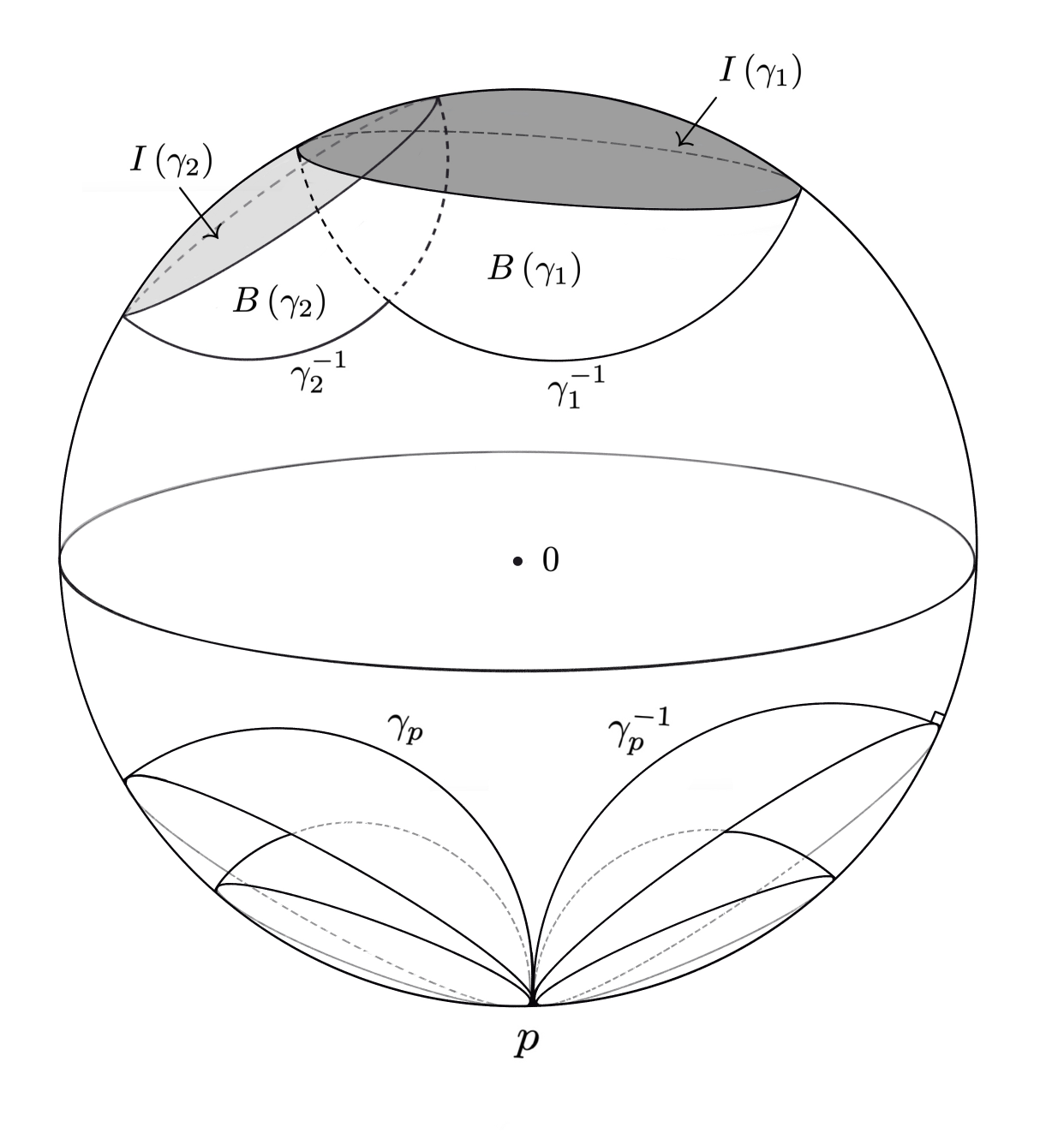}}
	\caption{1}
\end{figure}

For each word $\gamma_{i_{0}}\gamma_{i_{1}}\dots \gamma_{i_{n}}$ written by the generators in $G_{\mathcal{F}}$,  a family of planes $P(\gamma_{i_{0}}), P(\gamma_{i_{0}}\gamma_{i_{1}}),\dots,P(\gamma_{i_{0}}\dots \gamma_{i_{n}})$  can be defined as: $P(\gamma_{i_{0}})$ is the plane containing the face $\bar{\mathcal{F}}\cap \gamma_{i_{0}}\bar{\mathcal{F}}$ and, for $k=1,2,\dots,n,$ $P(\gamma_{i_{0}}\dots \gamma_{i_{n}})$ is the plane containing $\gamma_{i_{0}}\dots \gamma_{i_{k-1}}\bar{\mathcal{F}}\cap \gamma_{i_{0}}\dots \gamma_{i_{k}}\bar{\mathcal{F}}.$
All these planes give a decomposition of $\mathbb{H}^{3}$  and we can  use this decomposition to construct a Markov map on $\widehat{\mathbb{C}}$.

Consider the set $\mathcal{B}$ of all planes in $G(\partial \mathcal{F})$ that intersect $\mathcal{F}$ either on  a face or at a vertex of $F$. The intersection of each plane in $\mathbb{H}^{3}$ with $\widehat{\mathbb{C}}$ is a circle or a line, and the collection of all these regions whose boundaries are unions of arcs belonging to such circles covers $\widehat{\mathbb{C}}.$
These regions can intersect on the boundaries, however their interiors are disjoint.
Let $\left \{ c_{1},c_{2}...,c_{s} \right \} $ ($\left \{ c_{1},c_{2}...,c_{n},...  \right \} $) be a finite (countable) collection of (not necessarily disjoint) Jordan curves in $\mathbb{S}^{2},$ such that $\widehat{\mathbb{C}}- \left (c_{1}\cup c_{2}\cup ..   \right ) $ has finite or countable many components.
Let $R_{1} ,...,R_{s} $ or $R_{1} ,...,R_{n} ,...$ be the closures of the components which contain points of $\Lambda(G) $ in their interiors, and let $R_{0}$ be the union of the closures of the remaining components. So $R_{0}$ contains the closure of a fundamental domain for the action of $G$ on $\Omega \left ( G \right ).$  Note that $R_{i}$ is connected for all $i\ge 1$ . This gives us either a finite or countable partition $\mathfrak{R}= \left \{ R_{i}  \right \} _{i=0}^{\infty }  $ of $\widehat{\mathbb{C}}$ minus a set of Lebesgue measure zero. Furthermore, suppose the set of faces-pairings transformations $G_{\mathcal{F}}$ of $G$ contains some parabolic elements.
Then the partitions at a parabolic fixed point $p$ of $G$ are derived from the projection of all planes intersecting at the point $p$ within the set $G(\partial \mathcal{F})$ onto the ideal boundary.
It is evident that under the action of the stabilizer subgroup $G_{p},$ these planes with $p$ as a common vertex form the collection of images of the faces of the fundamental polygon.
Hence the parabolic fixed point $p$ corresponds to a countable number of partitions $\mathfrak{R}.$

It is not difficult to see that, for the partitions $\mathfrak{R},$ the map $f$ defined above  can be extended $C^{2}$ smoothly to the infinity boundary  $\partial \mathbb{H}^{3}=\widehat{\mathbb{C}}$  of $\mathbb{H}^{3}$. We call such $f$ a Markov map of $G$, denoted by $f_{G}$.  Rocha \cite{R} proved that  $f_{G}$  possesses the Markov property with respect to a countable partition $\mathfrak{R} = \{ R_{i} \}_{i=0}^{\infty}$ of $\widehat{\mathbb{C}}$.  The piecewise smooth map $f_{G}$ satisfies the following conditions:

$\left( A \right)$
For each $i\ge 1,$ there exists $\gamma _{i}  \in G$ so that $f_{G}\mid _{R_{i} } = \gamma _{i} \mid _{R_{i} }$,  and for $i=0$,  $f_{G}\mid _{R_{0} } $ is equal to the identity.

$\left( B \right)$ If $f_{G}(R_{i}) \cap R_{j} \ne \emptyset,$ then $f_{G}(R_{i}) \supset R_{j}.$

$\left( C \right)$ The map $f_{G}$ satisfies a transitivity condition: the image of any element $R_{i} \in \mathfrak{R} $ under $f_{G}$ is a union of elements in $ \mathfrak{R}.$

The idea of a Markov map is  to replace the action of the group $G$ on $\mathbb{S}^{2}$ by the single map $f_{G}.$ In addition, to avoid these intersections cause ambiguity, we assume that $\partial R_{i} \cap \Lambda _{G} $ is finite for all $i.$

A finite sequence $\left( r_{0}, r_{1}, \dots,  r_{n} \right)$ is called  admissible if $f_{G}\left( R_{r_{k}} \right) \supseteq R_{r_{k+1}}$ for \\ $0\le k\le n-1,$ so that $$f_{G}^{n}\left( R_{r_{0}} \right) \supseteq f_{G}^{n-1}\left( R_{r_{1}} \right) \supseteq \dots \supseteq R_{r_{n}}.$$
Following the standard procedures of symbolic dynamics, we  introduce a subshift of finite type on the space $\textstyle \sum_{f}$ of all admissible sequences
by $$\tau : {\textstyle \sum_{f}} \to {\textstyle \sum_{f}}, ~~\tau \left( r_{k} \right)_{k=0}^{\infty} = \left( r_{k+1} \right)_{k=0}^{\infty},$$
where $\textstyle \sum_{f}$ is defined as $${\textstyle \sum_{f}^{}} = \left\{ \left( r_{k} \right)_{k=0}^{\infty} : f_{G}\left( R_{r_{k}} \right) \supseteq R_{r_{k+1}},\; k=0,1,\dots \right\},$$ for more details see \cite{WM}.

In the following, we will use the Markov map $f_{G}$ to get a coding for the points in the limit set $\Lambda(G)$ of $G.$
Using admissible sequences, we consider the following set
$$\Omega = \left\{ \left( \gamma _{i} \right)_{i=0}^{\infty} : B\left( \gamma _{0}\gamma _{1}\dots \gamma _{i} \right) \ne \emptyset,\; \gamma _{i} \in G_{\mathcal{F}},\; i=0,1,\dots \right\}.$$
The Markov property of $f_{G}$ guarantees that the map $$\psi : {\textstyle \sum_{f_{G}}} \to \Omega,\:\: \psi\left( (r_{k})_{k=0}^{\infty} \right) = \left( \psi(r_{k}) \right)_{k=0}^{\infty}$$ is well defined,
where for each $r_{k}$,  $\psi(r_{k}) = \gamma_{k}\in G $ if $f_{G}\mid _{R_{r_{k} } }= \gamma_{k} .$

This map $\psi$ allows us to define a map from the shift space ${\textstyle \sum_{f}}$ to the limit set $\Lambda(G)$ of $G.$
Using the fact that
$$I\left( \gamma_{0}\gamma_{1}\dots \gamma_{n} \right) \supseteq I\left( \gamma_{0}\gamma_{1}\dots \gamma_{n+1} \right)$$
and $$\operatorname{diam} I\left( \gamma_{0}\gamma_{1}\dots \gamma_{n} \right) \to 0 ~~\text{ as}~~ n\to\infty,$$
we have that if $\bar{r} = (r_{0},r_{1},\dots) \in {\textstyle \sum_{f}},$ then
$$\pi(\bar{r}) = \bigcap_{k=0}^{\infty} I\left( \psi(r_{0})\psi(r_{1})\dots\psi(r_{k}) \right) $$ is a single point in $I\left( \psi(r_{0}) \right) \cap \Lambda(G).$

Observe that points on the boundary of each $R_{k}$ can have more than one $f$-expansion, and therefore the map $\pi : {\textstyle \sum_{f}} \to \Lambda(G)$ possibly fails to be a bijection precisely at these points. By Theorem 2.2 in \cite{R}, the map $\pi$ is a bijection except in a Lebesgue null set ${\textstyle \sum^{'}} \subset {\textstyle \sum_{f}}.$
Therefore, from the perspective of Markov partition, we can obtain the following definition.

If $(r_{0}, r_{1}, \dots,  r_{n})$ is admissible, define
$$R(r_{0}\dots r_{n}) = \bigcap_{i=0}^{n} f^{-i}\left( R_{r_{i}} \right).$$
As noted in the preceding text, if $(r_{0}, r_{1}, \dots, r_{n})$ is admissible, then the restriction of $f_{G}^{n}$ to the interior of $R(r_{0}\dots r_{n})$ is equal to the restriction of $\psi \left ( r_{n-1}  \right )^{-1}    ...\psi \left ( r_{0}  \right )^{-1} .$

\subsection{Iterated function system }  Let us recall the definition of the iterated function systems (IFS), see \cite{MU}.

Let $(X,\rho)$ be a non-empty compact metric space and let $I$ be a countable index set with at least two elements.  Let
$$S = \{ \phi_i : X \to X,\; i\in I \}$$
be a collection of injective contractions from $X$ to $X$ for which there exists $0<s<1$ such that
$$\rho(\phi_i(x),\phi_i(y)) \le s\,\rho(x,y),\qquad i\in I,\; x,y\in X.$$
Then $S$ is called an iterated function system.

Let $I_{n}$ denote the space of the words of length $n$ and  $I_{\infty}$ denote the space of infinite sequences of symbols in $I.$  Let $I^{*}= \bigcup_{n\ge 1} I_{n}.$ For $$\bar{r} = \left ( r_{1}, r_{2}, ..., r_{n}  \right )  \in I_{n},\; n\ge 1,$$
set
$$\phi _{\bar{r} } = \phi _{r_{1} }\circ  \phi _{r_{2} }\circ ...\circ  \phi _{r_{n}  }.$$
If $\bar{r}  \in I^{\ast }\cup I_{\infty } $ and $n$ does not exceed the length of $\bar{r} ,$ we denote by $\bar{r}  \mid _{n} $ the word $\left ( r_{1}, r_{2},..., r_{n}  \right ).$
For $\bar{r} \in I_{\infty } ,$ the set $$\pi(\bar{r} ) = \bigcap_{n=1}^{\infty} \phi_{\bar{r}  \mid n}(X)$$
is a singleton and therefore we can define a map $\pi : I_{\infty} \to X.$

The set
$$J = \pi(I_{\infty}) = \bigcup_{\bar{r} \in I_{\infty}} \bigcap_{n=1}^{\infty} \phi_{\bar{r} \mid n}(X)$$
is called the limit set associated to the system $S = \{ \phi_i : X\to X,\; i\in I \}.$

Let $\tau : I_{\infty} \to I_{\infty}$ be the left-shift map on $I_{\infty},$ that is, $\tau \left ( \bar{r}  \right )= \left ( r_{2}r_{3}... \right )  .$ Since $\phi_i(\pi(\bar{r})) = \pi(i\bar{r} )$ for every $i\in I,$ we get
$$\pi\circ\tau(\bar{r} ) = \phi_{r_1}^{-1} \circ \pi(\bar{r} )$$
and
$$J = \bigcup_{i\in I} \phi_i(J).$$
Notice that if $I$ is finite, then $J$ is compact.

For every $\iota \ge 0,$ we define
$$\psi(\iota ) = \sum_{i\in I } \|\phi_i'\|^{\iota } \le \infty,$$
where the norm $\|\cdot\|$ is the supremum norm taken over $X.$ For $n\ge 1,$ let
$$\psi_n(\iota ) = \sum_{\bar{r} \in I_{n} } \|\phi_{\bar{r} }'\|^{\iota }.$$
By \cite{MU}, we know that
$$\psi_n(\iota ) < \infty \;\Leftrightarrow\; \psi(\iota )=\psi_1(\iota ) < \infty.$$
Let $\theta = \inf\{ \iota  : \psi(\iota ) < \infty \}.$ For $n\ge 1,$ the function $\log(\psi_n)$ is convex on $(\theta,+\infty),$ and for these values of $\theta,$
$$P(\iota ) = \lim_{n\to\infty} \frac{1}{n} \log \psi_n(\iota )$$
always exists and is finite if and only if $\psi(\iota ) < \infty.$ The function $P(\iota )$ is called the topological pressure function. By  Lemma 3.2 in \cite{MU}, we know that $P(\iota )$ is strictly decreasing in the variable $\iota $ on the interval $(\theta,+\infty).$ The IFS is regular if and only if $P(\theta) = \infty,$ which is equivalent to $\psi(\theta) = \infty.$

\section{Properties of Markov maps}
In order to avoid discussing the infinity point $\infty$ separately,  in this section we will use the hyperbolic unit ball  model $\mathbb{B}^{3}$ instead of the hyperbolic plane  model $\mathbb{H}^{3}$. In this condition, we replace the infinity boundary $\hat{\mathbb{C}}$ by the unit sphere $\mathbb{S}^{2}.$


Let $G$ be a geometrically finite Kleinian group acting on $\mathbb{B}^{3}$ with parabolic elements and $\mathcal{F}$ be one of its fundamental domains.
Since the set of faces-pairings transformations $G_{\mathcal{F}}$ contains parabolic elements. Then the Markov partitions $\mathfrak{R}$ of $G$ is countable and the quotient manifold $\mathbb{B}^{3}/G$ contains cusps.

Let $P$ denote the finite set of all parabolic vertices of an even-cornered fundamental domain for $G.$ Then each periodic orbit in $P$ corresponds to a cusp of the quotient manifold $\mathbb{B}^{3}/G.$
Each  $p\in P$ is also corresponding to a periodic point of the Markov map $f_{G}$  with $f^{n}_{G}(p)=p$ and $(f^{n}_{G})'(p)=1,$ or a point in the periodic orbit of some parabolic periodic point. Hence the expanding properties of $f_{G}$ fails.
To avoid this situation happening, we will choose a sufficiently small neighborhood $U_{p}$ (the radius of it will be given later) for each $p\in P$ such that the Markov map $f_{G}$  is expanding when it is restricted on a subset $K$ of $\Lambda(G).$
Here and the following, for the sake of symbol simplicity, we will replace $f_{G}$ by $f.$   Denote by $\mathcal{P}=\bigcup_{n= 0}^{\infty } f^{(-n)} (P)$ the set of all parabolic fixed point of $G.$

\begin{theorem}
	Let $G$ be a geometrically finite Kleinian group with parabolic elements and $f$ be the associated Markov map.  Then for every quasiconformal map $g\colon \mathbb{S}^{2}\to\mathbb{S}^{2}$ being compatible
	with $G,$ there exists a neighborhood $U_p$ , for each $p\in P$, such that the set \begin{align}
		K=\Lambda(G)\setminus
		\left(
		\mathcal{P}\cup\bigcup_{p\in P}U_p
		\right)
	\end{align}
	satisfies $\dim_H g(K)=\dim_H g\bigl(\Lambda(G)\bigr),$
	where $P$ denotes the finite set of parabolic vertices of an even-cornered fundamental domain for $G,$ $\mathcal{P} =\bigcup_{n=0}^{\infty} f^{-n}(P)$ denotes the set of all parabolic fixed points of $G, $ and $\dim_H$ denotes the Hausdorff dimension.
\end{theorem}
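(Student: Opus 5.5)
The plan is to show that the countable set $\mathcal{P}$ and the cusp neighbourhoods $U_p$ contribute nothing extra to Hausdorff dimension, because every other point of $\Lambda(G)$ is carried into $K$ by a single element of $G$. One inequality is immediate: $K\subset\Lambda(G)$ gives $\dim_H g(K)\le \dim_H g(\Lambda(G))$ by monotonicity. For the reverse inequality we decompose
$$\Lambda(G)=\mathcal{P}\cup\bigl(\Lambda(G)\setminus\mathcal{P}\bigr).$$
The set $\mathcal{P}$ is countable, so $g(\mathcal{P})$ is countable and $\dim_H g(\mathcal{P})=0$. It therefore suffices to bound $\dim_H g(\Lambda(G)\setminus\mathcal{P})$ by $\dim_H g(K)$.

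\textbf{Step 1 (choice of $U_p$).} For each $p\in P$ we choose $U_p$ to be a small round neighbourhood of $p$. It should lie inside the union of the Markov pieces $R_i$ adjacent to $p$, so that on $U_p\setminus\{p\}$ the map $f$ acts by the parabolic generators of $G_p$, or by the group elements along the periodic orbit of $p$. We also require $K\neq\emptyset$.

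\textbf{Step 2 (every non-parabolic limit point is eventually carried into $K$).} The key claim is that for every $x\in\Lambda(G)\setminus\mathcal{P}$ there is some $n\ge 0$ with $f^n(x)\in K$. Suppose instead that $f^n(x)\in\bigcup_p U_p$ for all $n$. By Step 1 and the Markov property (A)--(C), the orbit would then be obtained by iterating parabolic elements fixing some $p$. Since $x\notin\mathcal{P}$, no iterate equals $p$. A point $y\neq p$ moves away from $p$ under iteration of the appropriate parabolic element: in the upper half-space model with $p=\infty$ this is a translation, and the orbit eventually leaves any horoball neighbourhood. So the orbit exits $U_p$. Also, $f^n(x)\in\Lambda(G)\setminus\mathcal{P}$ for every $n$, because both sets are $G$-invariant and $f$ acts by group elements. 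Hence some iterate lies in $K$.

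\textbf{Step 3 (countable decomposition).} By Step 2,
$$\Lambda(G)\setminus\mathcal{P}\subset\bigcup_{n\ge0}\ \bigcup_{(r_0,\dots,r_n)\text{ admissible}} \bigl(f^n|_{R(r_0\dots r_n)}\bigr)^{-1}(K).$$
This union is countable. On the interior of each cylinder, $f^n$ is the group element $\psi(r_{n-1})^{-1}\cdots\psi(r_0)^{-1}$, so each piece is contained in $\gamma(K)$ for some $\gamma\in G$. Boundary points are finitely many per piece by the standing assumption that $\partial R_i\cap\Lambda$ is finite, so they are absorbed into a countable set. Because $g$ is compatible with $G$, we have
$$g(\gamma(K))=\gamma_\mu\bigl(g(K)\bigr),\qquad \gamma_\mu=g\gamma g^{-1}\in \mathrm{PSL}(2,\mathbb{C}).$$
A Möbius transformation is a diffeomorphism of $\mathbb{S}^2$, hence bi-Lipschitz in the spherical metric, so $\dim_H\gamma_\mu(g(K))=\dim_H g(K)$. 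Countable stability of Hausdorff dimension then gives $\dim_H g(\Lambda(G))\le\max\{0,\dim_H g(K)\}=\dim_H g(K)$. This completes the proof.

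\textbf{Main obstacle.} I expect Step 2 to be the hard part. The difficulty is making rigorous that an $f$-orbit confined to $U_p$ must be driven by the parabolic stabilizer $G_p$, even when $p$ lies in a periodic orbit rather than being a fixed point, and when $G_p\cong\mathbb{Z}\oplus\mathbb{Z}$. I would handle this by passing to $f^{m}$, where $m$ is the period of $p$, and conjugating $p$ to $\infty$. Then $f^{m}$ near $p$ becomes a finite collection of translations in the lattice $G_p$ that all move points outward, away from $p$. For $U_p$ small, these translations push every point other than $p$ out of the corresponding horoball region in finitely many steps.
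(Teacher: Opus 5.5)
Your argument is correct in outline, but it takes a genuinely different route from the paper.

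\textbf{The paper's route.} The paper never writes $\Lambda(G)\setminus\mathcal P$ as a union of translates of $K$. In Lemma 3.4 it shows that removing $\mathcal P$ and sufficiently small shadows $S(p_i,t_i)$ leaves a set of positive Patterson--Sullivan measure whose Hausdorff dimension is still $\delta$. The ingredients are the global measure formula, the ergodic law $\rho(x,t)/t\to 0$ for the excursion function, and the mass distribution principle. It then applies this lemma to the deformed group $G'=gGg^{-1}$, which is again geometrically finite with parabolic set $g(\mathcal P)$. Pulling back, it chooses $U_p=S(p,t_p)\subset g^{-1}(V_{g(p)})$, so that $g(K)$ contains a full-dimensional set $L$.

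\textbf{Comparison.} You replace all of this with countable stability and the identity $g\circ\gamma=(g\gamma g^{-1})\circ g$, where $g\gamma g^{-1}$ is M\"obius. Your route needs no Patterson--Sullivan theory and no identification $\dim_H\Lambda=\delta$. More importantly, your $U_p$ can be any sufficiently small neighbourhoods, fixed independently of $g$, so a single $K$ works for every compatible $g$ at once. The paper's $U_p$ depend on $g$. A $g$-independent $K$ is convenient when $K$ is later used along a whole family $g^{\mu_t}$. What the paper's route gives in return is measure-theoretic information about $K$, namely positive $\mu_\delta$-mass and local dimension $\delta$ almost everywhere. Your argument does not give that, but the statement does not ask for it.

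\textbf{Step 2 as written.} The justification needs repair.
\begin{itemize}
\item Iterating a fixed parabolic element moves every point of $\mathbb{S}^2$ toward its fixed point, not away from it. What actually pushes orbits out of $U_p$ is that the return map on each access is the element of $G_p$ pointing back toward a fundamental region of $G_p$. The Markov branch then changes once the orbit gets there.
\item You must also take the $U_p$ small relative to the mutual distances between points of $P$ and to the derivatives of the finitely many branches at each $p$. This ensures that an orbit confined to $\bigcup_p U_p$ follows a single cusp cycle. Otherwise, leaving $U_p$ could mean landing in some other $U_q$ rather than in $K$.
\end{itemize}

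\textbf{Bypassing Step 2.} For a non-elementary group, every $G$-orbit in $\Lambda(G)$ is dense. Let $W$ be an open set meeting $\Lambda(G)$ and disjoint from $\bigcup_p\overline{U_p}$. Then each $x\in\Lambda(G)\setminus\mathcal P$ has some $\gamma x\in W$, and $\gamma x\notin\mathcal P$ by $G$-invariance of $\mathcal P$, so $\gamma x\in K$. This gives $\Lambda(G)\setminus\mathcal P\subset\bigcup_{\gamma\in G}\gamma(K)$ directly. It removes both the cusp dynamics and the boundary-point bookkeeping in Step 3.
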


To prove this theorem, we  need the following well known Mass distribution principle, see \cite{KF}.

\begin{lemma}(Mass distribution principle)
	Suppose $E$ is a compact set and $s\ge 0.$ If there is a mass distribution $\mu $ on $E$ and a constant $C>0$ such that for any $r > 0$,
	$\mu \left ( V \right )\le C \text{diam}( V  )^{s}   $ for all closed sets $V$ with $\text{diam}( V  )\le r ,$ then  dim$_{H}\left ( E \right ) \ge s$.
\end{lemma}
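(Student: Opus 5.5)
The plan is the standard covering argument: compare the measure of $E$ with an arbitrary countable cover of $E$, then use positivity of $\mu(E)$ to bound the $s$-dimensional Hausdorff measure from below.

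Since $\mu$ is a mass distribution on $E$, it is a finite Borel measure supported on $E$ with $0<\mu(E)<\infty$. It therefore suffices to show $\mathcal{H}^{s}(E)>0$, because $\mathcal{H}^{s}(E)>0$ forces $\dim_H(E)\ge s$ by the definition of Hausdorff dimension as the threshold where $\mathcal{H}^t$ jumps from $\infty$ to $0$.

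Fix $r>0$ from the hypothesis and take any $\delta\in(0,r]$. Let $\{U_i\}$ be any countable cover of $E$ with $\operatorname{diam}(U_i)\le\delta$. Replace each $U_i$ by its closure $V_i=\overline{U_i}$. This does not change the diameter, so $V_i$ is a closed set with $\operatorname{diam}(V_i)\le r$, and the hypothesis gives $\mu(V_i)\le C\operatorname{diam}(U_i)^{s}$. By countable subadditivity and the fact that the $V_i$ cover $E$,
\begin{align*}
0<\mu(E)\le\sum_i\mu(V_i)\le C\sum_i\operatorname{diam}(U_i)^{s}.
\end{align*}
Taking the infimum over all such $\delta$-covers gives $\mathcal{H}^{s}_{\delta}(E)\ge\mu(E)/C$. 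Letting $\delta\to0$ then gives $\mathcal{H}^{s}(E)\ge\mu(E)/C>0$, and hence $\dim_H(E)\ge s$.

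There is no real obstacle. The only points needing care are two bookkeeping steps. First, the measure must be evaluated on closed sets, since the hypothesis is only stated for closed $V$; passing to closures handles this. Second, $\mu(E)$ must be positive, which is part of the definition of a mass distribution. Compactness of $E$ is not actually needed for the argument; it only serves to ensure that such a $\mu$ is a finite Borel measure.
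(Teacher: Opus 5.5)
Your proof is correct and is essentially the same approach the paper relies on: the paper gives no proof of this lemma and simply cites Falconer's book, where the mass distribution principle is proved by exactly this covering argument, comparing $\mu(E)$ with $\sum_i \operatorname{diam}(U_i)^s$ over $\delta$-covers. Replacing each $U_i$ by its closure $V_i=\overline{U_i}$ is the right step here, since the hypothesis is stated only for closed sets; it also makes every set you evaluate $\mu$ on Borel measurable.
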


A Borel measure $\mu_{s}$ is an $s$-conformal measure of a Kleinian group $G$ if $\mu$ is a non-trivial finite measure supported on $\Lambda(G)$ satisfying
$$\mu_{s} (\gamma (A))=\int_{A}|\gamma '|^{s} d\mu$$
for every Borel subset $A$ of $\mathbb{B}^{3}$ and every $\gamma \in G,$ where $|\gamma '|$ is the operator norm of the derivative of $\gamma.$

Patterson constructed a class of $\delta$-conformal measures of Fuchsian groups , where $\delta$ denotes the Poincare convergence exponent. Sullivan proved that Patterson's construction works also for Kleinian groups. Today the conformal measures given by Patterson's construction \cite{Pa} are known as Pattersson-Sullivan measure.

For $x\in \Lambda \left ( G \right )  $ and $t> 0,$ let $x_{t} \in \mathbb{B}^{3}  $ be the unique point on the geodesic ray joining $O$ to $x$ which is the point at hyperbolic distance $t$ from $O.$ Write $S\left ( x,t \right )\subset \mathbb{S}^{2}   $ to denote the shadow at the infinity of the 2-dimensional hyperbolic plane $H_{t}$ passing through $x_{t}$ with normal direction  along the geodesic ray from $O$ to $x.$ Denote this projection map by $\pi,$ see FIGURE 3.1.

It is easy to see that $S\left ( x, t \right )=\pi(H_{t})$ is an Euclidean ball centred at $x$ with radius uniformly comparable to $e^{-t} .$
\begin{figure}[htbp]
	\centering
	\raisebox{-10em}{\includegraphics[height=24.8em]{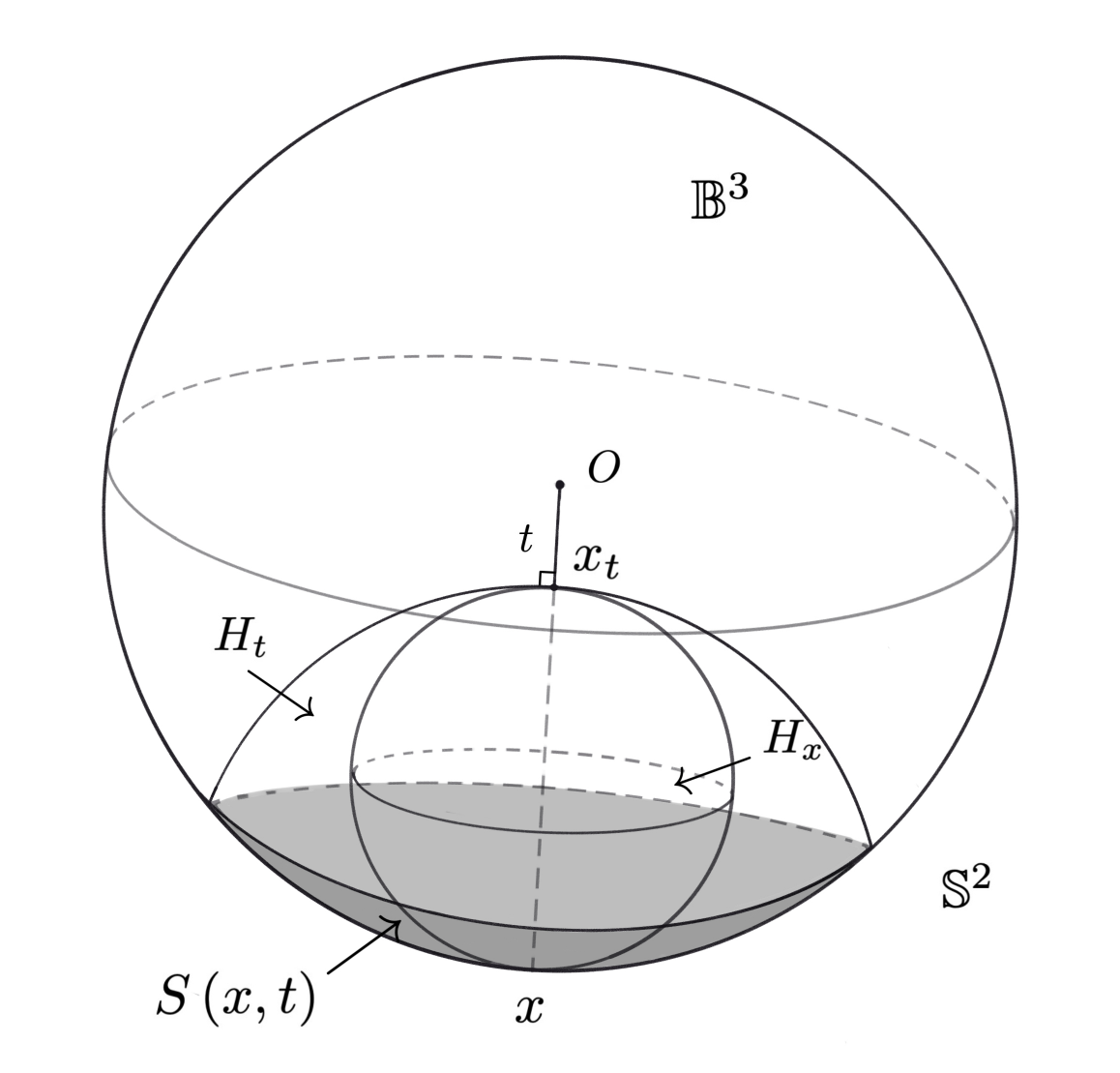}}
	\caption{1}
\end{figure}

For each point $p$ in  $P,$ let $k\left ( p \right ) $  be the maximal rank of a free Abelian subgroup of the stabiliser of $p$ in $G$. It is known that $\delta > k\left ( p \right )/2 $ for all $p\in P.$ We will use the famous global measure formula lemma as follow.

\begin{lemma}(The global measure formula,\cite{BO}\cite{VA}) For all $x\in \Lambda \left ( G \right )  $ and all positive $t> 0$, the quantity
	\begin{align}
		\mu _{\delta }  \left ( S\left ( x,t \right )  \right ) exp\left ( -t\delta -\rho \left ( x,t \right )\left ( \delta -k\left ( x,t \right ) \right )   \right )
	\end{align}
	is bounded from above and below. Here $k\left ( x,t \right ) = k\left ( p \right ) $ if $x_{t}\in H_{p}  $ for some $p$ and $0$ otherwise, and
	$$\rho \left ( x,t \right ) = \inf \left \{ d_{\mathbb{B} }\left ( x_{t},y  \right ) :y\notin H_{p}    \right \} ,$$
	if $x_{t}\in H_{p}  $ for some $p$ and $0$ otherwise. Here $d_{\mathbb B}$ denotes the hyperbolic metric on $\mathbb B^3.$
\end{lemma}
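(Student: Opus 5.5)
The plan is to prove the formula in the form of Stratmann and Velani, starting from the $\delta$-conformality of the Patterson–Sullivan measure $\mu_\delta$ and from geometric finiteness. First, fix a $G$-invariant family of pairwise disjoint open standard horoballs $\{H_p\}$, one at each parabolic fixed point, chosen so that $\bigl(\mathrm{Hull}(\Lambda)\setminus \bigcup_p H_p\bigr)/G$ is compact. The theorem of Beardon and Maskit quoted in the introduction, which says the limit set consists of conical points and bounded parabolic points, guarantees that such a family exists. With this choice the quantities $k(x,t)$ and $\rho(x,t)$ are well defined.

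\emph{Step 1 (thick part).} Suppose $x_t\notin\bigcup_p H_p$. Then $x_t$ lies within a bounded distance $D$ of some orbit point $g(O)$, $g\in G$. Pull back by $g^{-1}$ and use the conformality relation $\mu_\delta(g(A))=\int_A|g'|^\delta\,d\mu_\delta$. Because $x_t$ is close to $g(O)$, $|g'|$ is comparable to $e^{-t}$ on $g^{-1}S(x,t)$, with constants depending only on $D$. Moreover $g^{-1}S(x,t)$ is a spherical cap of definite size containing a point of $\Lambda(G)$. Compactness of the thick part together with $\mathrm{supp}\,\mu_\delta=\Lambda(G)$ then bounds $\mu_\delta(g^{-1}S(x,t))$ above and below. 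This is Sullivan's shadow lemma, and it gives $\mu_\delta(S(x,t))\asymp e^{-t\delta}$, which is the formula with $\rho=k=0$.

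\emph{Step 2 (cusp part).} Suppose $x_t\in H_p$. Let $s\le t$ be the entrance time of the ray $[O,x)$ into $H_p$. Map $p$ to $\infty$ in the upper half-space by an element bringing $x_s$ to a bounded position; by Step 1 this costs the factor $e^{-s\delta}$. In the normalized picture $G_p$ acts on $\mathbb{C}$ as a rank-$k$ lattice of translations, and $\Lambda\setminus\{\infty\}$ lies within a bounded neighbourhood $N$ of the $k$-dimensional subspace spanned by $G_p$, because $p$ is bounded parabolic. Choose a compact fundamental set $E$ for $G_p$ on $\Lambda\setminus\{\infty\}$. Then
\[
\mu_\delta(A)=\sum_{\gamma\in G_p}\int_{\gamma^{-1}(A)\cap E}|\gamma'|^\delta\,d\mu_\delta ,
\]
where, in the spherical normalization, $|\gamma'|\asymp(1+|\gamma|^2)^{-1}$ on $E$ and $|\gamma|$ denotes the translation length. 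Two sub-cases arise.
\begin{enumerate}[(a)]
\item The ray is still going up into the cusp, with $t-s\le$ half the depth. The shadow $S(x,t)$ then corresponds to the complement of a Euclidean ball of radius comparable to $e^{\rho}$ around the origin, so it contains $\infty=p$. Summing $|n|^{-2\delta}$ over lattice points with $|n|\gtrsim e^{\rho}$ gives a quantity comparable to $e^{-\rho(2\delta-k)}$; this sum converges because $\delta>k/2$.
\item The ray is exiting the cusp. The shadow is then a small ball near a single translate $\gamma E$ with $|\gamma|$ comparable to $e^{(t-s+\rho)/2}$, and it captures essentially one term.
\end{enumerate}
Re-expressing these in terms of $t$ and $\rho(x,t)$, via $t-s=\rho$ in case (a) and $t-s=2d-\rho$ in case (b) with $d$ the maximal depth, should collapse both cases to $e^{-t\delta}e^{-\rho(x,t)(\delta-k(p))}$, up to the sign convention on $\rho$ used in the statement.

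The main obstacle is Step 2. It needs uniform comparability constants for the lattice sum and the truncated lattice sum, which requires controlling how $N$ and $E$ sit relative to the shadow. It also needs a separate treatment of the boundary regime where $\rho$ is small, in which case (a) and case (b) must match up, and the reconciliation of the exponent bookkeeping between the two cases. I would first verify both cases for a single rank-one cusp by explicit computation, and then argue uniformity over the finitely many cusp classes.
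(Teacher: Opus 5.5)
The paper gives no proof of this lemma; it quotes Stratmann--Velani, and the argument there has the structure you outline: Sullivan's shadow lemma when $x_t$ is in the thick part, and sums over the lattice $G_p$ when $x_t$ is in a horoball. Your Step~1 and case~(a) are fine. Case~(b) is wrong, and it is exactly where the rank $k$ must enter on the exiting side of the excursion.

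Work in your normalized picture: $p=\infty$, horosphere at height $1$, entrance point $x_s$ at bounded position. A ray of maximal depth $d$ is a semicircle of Euclidean radius $\asymp e^{d}$ whose forward endpoint $b$ has $|b|\asymp e^{d}$. At a descending point of depth $\rho$ (height $\asymp e^{\rho}$, away from the apex), the plane orthogonal to the ray is a hemisphere of radius $\asymp e^{\rho}$ whose base disc contains $b$ well inside. So the normalized shadow is a disc of radius $\asymp e^{\rho}$, which is large, not small.

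Since $b\in\Lambda(G)$ and $\Lambda(G)\setminus\{\infty\}$ lies in a bounded neighbourhood of the $k$-dimensional span of $G_p$, this disc contains $\gtrsim e^{k\rho}$ and meets $\lesssim e^{k\rho}$ translates $\gamma E$. All of them have $|\gamma|\asymp e^{d}$, hence measure $\asymp e^{-2\delta d}$. This gives $\mu_\delta(S(x,t))\asymp e^{-s\delta}e^{k\rho}e^{-2\delta d}$, and with $2d=t-s+\rho$ that is exactly $e^{-t\delta}e^{-\rho(\delta-k)}$.

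Keeping ``essentially one term'' instead gives $e^{-s\delta}e^{-2\delta d}=e^{-t\delta}e^{-\rho\delta}$. This is off by the unbounded factor $e^{k\rho}$, so your two cases do not collapse to one formula, and no sign convention repairs it.

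Three smaller points.

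\emph{Where the cases meet.} Cases~(a) and~(b) meet at the apex $t-s\approx d$, where $\rho\approx d$ is maximal; both give $e^{-s\delta}e^{-(2\delta-k)d}$ there. In the regime of small $\rho$, each case has to be matched with Step~1 instead, so the ``boundary regime'' you single out is misidentified.

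\emph{The atom at $p$.} In case~(a) the shadow contains $p$, so your decomposition over translates of $E$ needs $\mu_\delta(\{p\})=0$. This is true: divergence type together with the Hopf--Tsuji--Sullivan theorem puts full mass on the conical limit set. But you must import it rather than take it from the paper, because the paper deduces $\mu_\delta(\{p\})=0$ from this very formula in the proof of the lemma on $K^{*}$.

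\emph{The printed statement.} The discrepancy with the printed statement is not a convention on $\rho$, which is just the depth and satisfies $\rho\ge 0$. The printed expression multiplies $\mu_\delta(S(x,t))$ by $\exp\bigl(-t\delta-\rho(\delta-k)\bigr)$, and that product tends to $0$ as $t\to\infty$. The correct statement is that $\mu_\delta(S(x,t))\exp\bigl(t\delta+\rho(x,t)(\delta-k(x,t))\bigr)$ is bounded above and below, which is the form you are rightly aiming for.
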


By Fraser's result ( \cite{JF}, Lemma 3.2), we know that $\rho \left ( p,t \right ) \sim t$ as $t\to \infty ,$ for any point $p\in P.$

%

Now its time to give the proof of Theorem 3.1.

\noindent $Proof~~of ~~Theorem~~ 3.1.$ Let $H^{t}$ be the hyperbolic plane passing through $x_{t}$ normalized by the geodesic line through $O$ and $x_{t}$, where $x_{t}$ is the point in the geodesic ray  $[O, \xi)$ from $O$ toward $\xi\in \mathbb{S}^{2}$ such that the hyperbolic length of the geodesic arc $[O,x_{t}]$ is $t.$  Let $\mu$ denote the Patterson-Sullivan measure of $G.$

For each point $x$ in  $\mathcal{P}=\bigcup_{n= 0}^{\infty } f^{(-n)} (P),$ as Stratmann and  Velani  did in \cite{BO}, we can choose a horoball $  H_{x} $ based at $x$ (a closed Euclidean ball whose interior lies inside $\mathbb{B}^{3} $ and is tangent to $\mathbb{S} ^{2} $ at $x$) such that $H_{x}$ and $ H_{y}$ are pairwise disjoint for different points $x, y$ in $ \mathcal{P}$. Furthermore, the horoballs can be chosen such that $\gamma \left ( H_{x}  \right )= H_{\gamma (x)}  $ for all $\gamma \in G$ and $x\in P.$

We first show the following lemma. The proof method used here is based on Bishop's method in \cite{BJ}.

\begin{lemma}Let $G$ be a non-elementary geometrically finite Kleinian group and $\Lambda(G)$ be its limit set. Let $\mathcal P$ be the set of all parabolic fixed points of $G$. Suppose that
	$$P=\{p_1,\ldots,p_m\}\subseteq\mathcal P$$
	is the finite set of parabolic vertices of a fundamental domain. Then, for each $i=1,\ldots,m$, one can choose $t_i>0$ sufficiently large so that
	\begin{align}K^{*}=\Lambda(G)\setminus
		\left(
		\mathcal P\cup\bigcup_{i=1}^{m}S(p_i,t_i)
		\right)
	\end{align}
	satisfies
	$\mu_\delta(K^{*})>0.$
	Furthermore,
	$\dim_H K^{*}=\dim_H\Lambda(G)=\delta.$
\end{lemma}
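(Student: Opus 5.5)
We prove the two assertions in turn: first $\mu_\delta(K^{*})>0$ by removing sets of small $\mu_\delta$-measure, then the dimension statement through Lemma 3.2 (mass distribution principle) applied to $\mu_\delta$ restricted to $K^{*}$.

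\emph{Step 1: positive measure.} For a geometrically finite $G$, the Patterson--Sullivan measure $\mu_\delta$ has no atoms. Indeed, the global measure formula (Lemma 3.3) gives, for $p\in P$,
\[
\mu_\delta(S(p,t))\asymp \exp\bigl(-t\delta+\rho(p,t)(\delta-k(p))\bigr),
\]
and by Fraser's estimate $\rho(p,t)\sim t$. Hence $\mu_\delta(S(p,t))\approx e^{-t\,k(p)(1+o(1))}\to 0$ as $t\to\infty$, because $k(p)\ge 1$. (Equivalently, this follows from $\delta>k(p)/2$ together with the standard fact that $\mu_\delta$ is supported on conical points up to a null set.)

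Since $\mathcal P$ is countable and $\mu_\delta$ has no atoms, $\mu_\delta(\mathcal P)=0$. Now choose each $t_i$ so large that $\mu_\delta(S(p_i,t_i))<\mu_\delta(\Lambda(G))/(2m)$. Then
\[
\mu_\delta(K^{*})\ge \mu_\delta(\Lambda(G))-\sum_{i=1}^m \mu_\delta(S(p_i,t_i))-\mu_\delta(\mathcal P)>\tfrac12\,\mu_\delta(\Lambda(G))>0.
\]

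\emph{Step 2: the lower bound on dimension.} The upper bound $\dim_H K^{*}\le\dim_H\Lambda(G)=\delta$ is trivial; the equality $\dim_H\Lambda(G)=\delta$ is Sullivan's/Bishop--Jones' theorem for geometrically finite groups. For the lower bound, take the compact set $E=\overline{K^{*}}$, or better a compact subset of $K^{*}$ of positive measure by inner regularity, and let $\nu=\mu_\delta|_E$. We need $\nu(V)\le C\,\mathrm{diam}(V)^{s}$ for every $s<\delta$.

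Any closed $V$ meeting $E$ lies in a shadow $S(x,t)$ with $x\in E$ and $e^{-t}\asymp \mathrm{diam}V$. By Lemma 3.3,
\[
\mu_\delta(S(x,t))\lesssim e^{-t\delta}\,e^{\rho(x,t)(\delta-k(x,t))}.
\]
If $\delta\le k(x,t)$ this is at most $e^{-t\delta}$ and we are done. The problematic case is $\delta>k(x,t)$ with $\rho(x,t)$ large, when the geodesic ray penetrates deeply into a horoball.

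\emph{Step 3: the main obstacle.} The hard part is Step 2 in this deep-excursion case, since the points of $K^{*}$ avoid the shadows $S(p_i,t_i)$ but not necessarily the images of horoballs at other parabolic points. Following Bishop, we avoid a uniform bound and instead use a density argument.

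By the Lebesgue density theorem for $\mu_\delta$ (a doubling-type measure via the global formula), $\nu$-almost every $x$ satisfies $\nu(S(x,t))\ge c\,\mu_\delta(S(x,t))$ for small scales. Moreover, $\mu_\delta$-almost every point is conical and, by the logarithm law (Stratmann--Velani), has $\rho(x,t)=o(t)$. Hence the local dimension satisfies
\[
\liminf_{t\to\infty}\frac{\log\nu(S(x,t))}{-t}\ge\delta \quad\text{for } \nu\text{-a.e. } x.
\]
The mass distribution principle in its local (Frostman/Billingsley) form then yields $\dim_H K^{*}\ge\delta$.

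If one insists on the uniform form of Lemma 3.2, we first restrict to the positive-measure subset of $K^{*}$ where $\rho(x,t)\le\epsilon t+C_\epsilon$ for all $t$; this subset has positive measure by Egorov's theorem. On it we obtain $\nu(V)\le C\,\mathrm{diam}(V)^{\delta-\epsilon\delta}$, and letting $\epsilon\to0$ gives the claim.
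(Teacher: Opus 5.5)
Your proposal is correct and follows essentially the paper's route: no atoms at parabolic points together with countability of $\mathcal P$, removal of shadows $S(p_i,t_i)$ of small measure, then $\rho(x,t)=o(t)$ for $\mu_\delta$-a.e.\ $x$, a restriction to a positive-measure set on which $\mu_\delta(S(x,t))\lesssim e^{-st}$ holds uniformly at small scales (the paper's sets $K_n$, your Egorov subset), and finally the mass distribution principle.

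One slip to fix: the global measure formula reads $\mu_\delta(S(x,t))\asymp e^{-t\delta}e^{-\rho(x,t)(\delta-k(x,t))}$. Hence $\mu_\delta(S(p,t))\asymp e^{-(2\delta-k(p))t}$, which tends to $0$ because $\delta>k(p)/2$, not because $k(p)\ge 1$. Likewise, the deep-excursion case that requires $\rho(x,t)=o(t)$ is $k(x,t)>\delta$, not $\delta>k(x,t)$; neither correction affects your final argument.
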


\begin{proof} By applying the global measure formula, we know that, for each $x\in \mathcal{P}$, there exists a constant $C$ such that the Patterson-Sullivan measure $\mu _{\delta } $ of the shadow $S(x, t)$  satisfies
	
	$$\mu _{\delta } ( S( x,t))\le C exp\left ( -t\delta -\rho \left ( x,t \right )\left ( \delta -k(x)  \right )   \right )  ,$$
	where $k\left ( x,t \right ) = k\left ( x \right ) $ if $x_{t}\in H_{x}  $ for some $x\in \mathcal{P}$ and $0$ otherwise, and
	$$\rho \left ( x,t \right ) = \inf \left \{ d_{\mathbb{H} }\left ( x_{t},y  \right ) :y\notin H_{x}    \right \} ,$$
	if $x_{t}\in H_{x}  $ for some $x\in \mathcal{P}$ and $0$ otherwise.
	By Fraser's result ( \cite{JF}, Lemma 3.2), we know that $\rho \left ( x,t \right ) \sim t$ as $t\to \infty ,$ for all point $x\in \mathcal{P}.$
	It is easy to see that   $\mu _{\delta }  \left ( S\left ( x,t \right )  \right )\to 0$  by $ exp\left ( -\left ( 2\delta -k \right )t  \right )\to 0 $ as $ t\to \infty .$
	
	Hence, for any $x\in \mathcal{P}$ and positive $r$, there exists a sufficiently large $t_{x} $ such that for any $t>  t_{x} ,$ $\mu _{\delta }  \left ( S(x,t)  \right ) \le r.$ Thus $\mu_\delta(\{p\})=0.$ Note that the set $\mathcal P$ of all parabolic fixed points is countable. Hence $\mu_\delta(\mathcal P)=0,$ since
	$$\mu_\delta(\mathcal P)
	\leq
	\sum_{p\in\mathcal P}\mu_\delta(\{p\})
	=0.$$

	By some easy calculation or see  \cite{JF}, we know that the radius of $S\left ( z,t \right )$ uniformly comparable to $e^{-t} .$ The global measure formula gives $$\mu_\delta\bigl(S(x,t)\bigr)
	\asymp
	e^{-\delta t}
	e^{-(\delta-k(x,t))\rho(x,t)}.$$
	Let \begin{align*}K^{*}=\Lambda(G)\setminus
		\left(
		\mathcal P\cup\bigcup_{i=1}^{m}S(p_i,t_i)
		\right).
	\end{align*}. For each point $p_i\in P$ we can choose a  sufficiently small number $r_{i}$ ,  and  a  sufficiently large number $t_{i}$ such that for $t\geq t_{i},$
	
	$$\mu _{\delta }  \left ( K^{*} \right ) = \mu _{\delta }  \left ( \Lambda(G)\setminus \bigcup_{p_{i}\in P}S(p_i,t_i)
	\right ) \ge 1-\sum_{p_{i} \in P}^{}\mu _{\delta }  \left ( S(p_i,t)  \right )\ge  1- \sum_{p_{i} \in P}r_{i}> 0 .$$
	
	Next we will show $\dim_H K^{*}=\dim_H\Lambda(G)=\delta.$ We consider a Frostman-type estimate.
	
	Notice that $G$ is of divergence type, the associated geodesic flow is ergodic by the Hopf-Tsuji-Sullivan theorem (\cite{DS3}, Theorem 14). Thus, by the ergodic law for the ray excursion function (\cite{DS3}, Corollary 19), we have
	$$\lim_{t\to\infty}\frac{\rho(x,t)}{t}=0
	\quad\text{for }\mu_\delta\text{-almost every }x\in \Lambda(G).$$
	Taking logarithms in the global measure formula and dividing by $(-t),$ we obtain $$\lim_{t\to\infty}
	\frac{\log \mu_\delta\bigl(S(x,t)\bigr)}{-t}
	=\delta.$$
	Setting $l=e^{-t},$ it follows that $$\lim_{r\to0}
	\frac{\log \mu_\delta(S(x,l))}{\log l}
	=\delta$$
	for $\mu_\delta$-almost every $x\in \Lambda(G).$
	
	
	Fixing $s<\delta$, for $\mu_\delta$-almost every $x\in K^{*}$, there exists $l_x>0$ such that for every $0<l<l_x,$
	$$\frac{\log\mu_\delta(S(x,l))}{\log l}>s.$$
	Whenever $0<l<1$, this inequality is equivalent to
	$$\mu_\delta(S(x,l))\leq l^s.$$
	For each positive integer $n$, define
	$$K_n= \left\{ x\in K^{*}:
	\mu_\delta(S(x,l))\leq l^s \text{ for every }0<l<\frac1n
	\right\}.$$
	Note the fact that
	$$\mu_\delta\left(
	K^{*}\setminus\bigcup_{n=1}^{\infty}K_n
	\right)=0$$
	and $\mu_\delta(K^{*})>0$, there exists some positive integer $n$ such that $\mu_\delta(K_n)>0.$ Define the normalized restricted measure as
	$$\nu(A)=
	\frac{\mu_\delta(A\cap K_n)}
	{\mu_\delta(K_n)}.$$
	Then $\nu$ is a probability measure supported on $K_n$.
	
	Fix point $y$. For $0<l<\frac1{2n},$ if $S(y,l)\cap K_n=\varnothing,$ then $\nu(S(y,l))=0.$ Otherwise choose point $\xi\in B(y,l)\cap K_n.$
	Then $B(y,l)\subseteq B(\xi,2l).$ Notice that
	and $\xi\in K_n$, we have
	$$\mu_\delta(S(x,2l))\leq(2l)^s.$$

	Therefore,
	$$\begin{aligned}
		\nu(S(y,l))=
		\frac{\mu_\delta(S(y,l)\cap K_n)}
		{\mu_\delta(K_n)}
		\leq
		\frac{\mu_\delta(S(x,2l))}
		{\mu_\delta(K_n)}
		\leq
		\frac{2^s}{\mu_\delta(K_n)}l^s.
	\end{aligned}$$
	Thus, there exists a constant
	$C_s=\frac{2^s}{\mu_\delta(K_n)}>0$
	such that
	$\nu(S(y,l))\leq C_s l^s$
	for all sufficiently small $l>0$.
	
	Hence by the mass distribution principle (Lemma 3.3), we have $\dim_H K_n\geq s.$
	It follows that $\dim_H K^{*}\geq s,$ since $K_n\subseteq K^{*}.$
	Letting $s$ tend to $\delta$, we have
	$\dim_H K^{*}\geq\delta.$
	On the other hand, $K^{*}\subseteq\Lambda(G)$ shows
	$$\dim_H K^{*}\leq\dim_H\Lambda(G).$$
	
	In conclusion, we have $\dim_H K^{*}=\delta=\dim_H\Lambda(G).$
\end{proof}

Now, we continue to prove Theorem 3.1.

Suppose $g$ is a quasiconformal homeomorphism of $\mathbb{B}^{3}$ which is  compatible with a geometrically finite  Kleinian group $G$. Then the quasiconformal conjugation group $G'=gGg^{-1}$ is also a geometrically finite Kleinian group, and $g(\Lambda(G))=\Lambda(G').$
Moreover, $g$ preserves the types of limit points. Hence $\mathcal P'=g(\mathcal P)$
is the set of all parabolic fixed points of $G'$.

Thus, $P'=g(P)$ is a finite set of  the $G'$-orbits inequivalent parabolic fixed points. For each $p\in P$, we can choose a shadow neighborhood $V_{g(p)}$ of $g(p)$ as  $V_{g(p)}=S\bigl(g(p),t'_{g(p)}\bigr)$
such that
$$L=
\Lambda(G')\setminus
\left(
\mathcal P'\cup\bigcup_{p\in P}V_{g(p)}
\right)$$
with $\dim_H L=\dim_H\Lambda(G')=\delta'.$

Since $g$ is a homeomorphism, $g^{-1}(V_{g(p)})$ is also a neighborhood of $p$. Let $t$ tend $\infty$, the shadows $S(p,t)$ form a neighborhood basis at $p$. We  choose $t_p$ sufficiently large so that
$$U_p=S(p,t_p)\subset g^{-1}(V_{g(p)}).$$
Consequently, $g(U_p)\subset V_{g(p)}.$
Define $K$ as
$$K=
\Lambda(G)\setminus \left(\mathcal P\cup\bigcup_{p\in P}U_p
\right).$$
We have
$$g(K)
=\Lambda(G')\setminus
\left(\mathcal P'\cup\bigcup_{p\in P}g(U_p)\right).$$
Since $g(U_p)\subset V_{g(p)}$, it follows that
$L\subset g(K).$
Therefore $$\dim_H g(K)\geq \dim_H L=\delta'.$$
On the other hand, $$g(K)\subset g(\Lambda(G))=\Lambda(G').$$
Thus  $$\dim_H g(K)\leq\dim_H\Lambda(G')=\delta'$$
and
$$\dim_Hg(K)=\dim_H\Lambda(G').$$ \qed

It is known that for a geometrically finite  Kleinian group $G$,  the limit set $\Lambda (G)$ is exactly the union of the conical limit set $\Lambda _{c} \left ( G \right ) $ and the parabolic fixed points set $\Lambda_{P}(G)$ which contains countably many parabolic fixed points.
The parabolic fixed points are just corresponding to the parabolic periodic points of the Markov map $f$ and the inverse orbits of the parabolic points.
Recall that a point $x$ is called strictly preperiodic of the map $f$ if there exist some positive numbers $i$ and $j$ such that $f^{i}(x)=f^{i+j}(x).$
The subset  $\mathcal{P}^{*}=\bigcup_{n= 1}^{\infty } f^{-n} P$ of $\mathcal{P}= \bigcup_{n= 0}^{\infty } f^{-n} P$ is just the preperiodic points set of inequivalent parabolic periodic points set $P$ for $f$, that is, $P\subset \overline{\mathcal{F}} \cap \mathbb{S}^{2} .$

Assume that $\mathcal{F}$ is an even-cornered fundamental domain for $G,$ and $\mathfrak{R } $ is a Markov partition of $G$ corresponding to $f$ on $\mathbb{S}^{2} .$
For each point $p\in P$ with rank 1, there are two faces corresponding to a vertex $p$  with different dynamical behavior: the concentrate face and pervasion face respectively. We will refer to both faces cusp-accesses, see FIGURE 2.1 . Similarly, each point $p\in P$ with rank 2 has four one-faced accesses.

Each access is chosen to be a repelling petal of the corresponding periodic return map. To avoid ambiguity between different Markov branches, we will work in the access-splitting space (or Markov extension space), denoted by $\mathcal{AP}$. Thus, if the point $p\in P$ belongs to  different accesses (belongs to different branches of $f$), we represent it by distinct copies $p_{a} .$ Consequently, distinct points $p_{a} $ and $p_{b} $  in the Markov accesses space $\mathcal{AP}$ may have the same projection onto the limit set. We must point out that a access corresponding to different branches must be treated separately. Even though they may project to the same point in the  limit set, they are regarded as distinct boundary points in the access space.

For each $p_a\in \mathcal{AP}$ corresponding to a Markov branch, we can choose a sufficiently small one-sided neighborhood $V_{p_a}$ contained in the repelling petal of that branch. Consider a subset  of the parabolic points set $\mathcal{P}.$
Let
\begin{align}
	S= \left \{ p\in \partial \mathfrak{R}:\ p\text{ is preperiodic to a parabolic periodic point  }  \right \},
\end{align}
which contains all the parabolic points on the boundary of the Markov partition $\mathfrak{R}$.
Thus $V_{p_a}$ is a one-sided interval (or a one-faced access neighborhood). Each preperiodic point $p\in S$ is corresponding to the branch lying on its boundary $p\in\partial V_{p_a}.$

Now we generalize Bowen's results \cite{IM} from the  interval to the two-dimensional sphere $\mathbb{S}^{2} $ as follows.

\begin{lemma}Suppose $G$ is a geometrically finite Kleinian group acting on the hyperbolic unit ball space $\mathbb{B}^{3}$ with parabolic elements and $f$ is the Markov map of $G.$ Then for any preperiodic point $p\in S$, there exists a sufficiently small neighborhood $V_{p} $ of $p$ such that for all $z\in V_{p} \setminus \left \{ p \right \} ,$ the constant $A\left ( V_{p}  \right )$ defined by  \begin{align}A(V_{p})= \inf_{ z\in V_{p}} \frac{\left | f^{'} \left ( z \right )  \right | _{\sigma }  \, |z-p|_{\sigma}}{|f(z)-f  (p)|_{\sigma}}\end{align} is positive.
	
\end{lemma}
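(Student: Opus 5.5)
Near a preperiodic point $p\in S$ we work on one access (one Markov branch) at a time. There $f$ agrees with a single M\"obius transformation. Concretely, $p$ lies on $\partial R_i$ for some $i\ge 1$, and on the one-sided neighborhood $V_{p_a}\subset R_i$ we have $f=\gamma_i$ for a fixed $\gamma_i\in G$, by condition (A) of the Markov map. The key observation is that a M\"obius transformation distorts spherical distances in a controlled, bilipschitz-on-small-scales way. So the ratio in the definition of $A(V_p)$ should tend to $1$ as $z\to p$, and a small enough neighborhood then gives a positive lower bound. If $p$ lies on the boundary of several $R_i$, we take $V_p$ to be the union of the finitely many one-sided neighborhoods $V_{p_a}$, one for each access. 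The infimum is then a minimum over finitely many branches, and it suffices to treat one branch.

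For one branch, write $\gamma=\gamma_i$. Since $\gamma$ is a M\"obius map of $\widehat{\mathbb{C}}$ (a conformal diffeomorphism of $\mathbb{S}^2$), it is smooth with nonvanishing conformal derivative $|\gamma'|_\sigma$ everywhere. For the chordal or spherical metric there is the exact identity
\[
|\gamma(z)-\gamma(w)|_\sigma^2=|\gamma'(z)|_\sigma\,|\gamma'(w)|_\sigma\,|z-w|_\sigma^2
\]
when $|\cdot|_\sigma$ is the chordal distance. If one insists on the geodesic spherical distance, the two are comparable with ratio tending to $1$ at small scales. Taking $w=p$ gives
\[
\frac{|\gamma'(z)|_\sigma\,|z-p|_\sigma}{|\gamma(z)-\gamma(p)|_\sigma}=\Bigl(\frac{|\gamma'(z)|_\sigma}{|\gamma'(p)|_\sigma}\Bigr)^{1/2}.
\]
The right-hand side is continuous in $z$ and equals $1$ at $z=p$. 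We choose $V_p$ small, say of spherical radius $r$, so that $|\gamma'(z)|_\sigma\ge \tfrac14|\gamma'(p)|_\sigma$ on $\overline{V_p}$. This gives $A(V_p)\ge 1/2>0$ for that branch. Alternatively, and without the exact identity, we can use a Taylor expansion. The map $\gamma$ is $C^2$ near $p$ with derivative a conformal linear map of norm $|\gamma'(p)|_\sigma$. The mean value inequality then gives $|\gamma(z)-\gamma(p)|_\sigma\le \sup_{V_p}|\gamma'|_\sigma\,|z-p|_\sigma$, so the ratio is at least $\inf_{V_p}|\gamma'|_\sigma/\sup_{V_p}|\gamma'|_\sigma$. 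This is positive and tends to $1$ as $V_p$ shrinks, by continuity and nonvanishing of $|\gamma'|_\sigma$.

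The main obstacle is the well-definedness of $f$ and $f(p)$ at $p\in\partial\mathfrak{R}$. Different branches may assign different values to $f(p)$, and $f$ is only piecewise smooth. This is exactly why we pass to the access space $\mathcal{AP}$. For each copy $p_a$ we use the branch $\gamma_a$ extended continuously to $p$, and we set $f(p):=\gamma_a(p)$ on that access. The neighborhood must be chosen inside $V_{p_a}$, where no other branch takes over. This is possible because $\partial R_i\cap\Lambda(G)$ is finite and $p$ is not an accumulation point of other pieces from inside the chosen access. A second subtlety is the case where $p$ is itself a parabolic fixed point of the branch, so that $|\gamma_a'(p)|_\sigma=1$. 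This does no harm: the argument uses only nonvanishing of the derivative, not expansion, so the bound still holds. Finally, we take $V_p$ to be the union over the finitely many accesses of $p$, with radius equal to the minimum of the chosen radii. Then $A(V_p)\ge \min_a \tfrac12>0$.
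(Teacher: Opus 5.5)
Your argument is correct, but it takes a different route from the paper. The paper never uses that the branch is M\"obius. It only says the branch extends holomorphically near $p$, then normalizes by conformal charts so that $F(0)=0$ and writes $F(\xi)=b_k\xi^k+O(\xi^{k+1})$, with $k\ge 1$ the order of vanishing. From this it gets that $|F'(\xi)|\,|\xi|/|F(\xi)|$ is bounded above and below for small $|\xi|$ (it tends to $k$). Finally it transfers this to $|\cdot|_\sigma$ using local comparability of the spherical metric with the chart metric.

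You instead use condition (A) to identify the branch with a single $\gamma\in G$. You then apply the exact chordal identity $|\gamma(z)-\gamma(w)|_\sigma^2=|\gamma'(z)|_\sigma|\gamma'(w)|_\sigma|z-w|_\sigma^2$, which makes the ratio exactly $\bigl(|\gamma'(z)|_\sigma/|\gamma'(p)|_\sigma\bigr)^{1/2}$. Your route buys the following:
\begin{itemize}
\item The paper's case $k>1$ is vacuous in this setting, since M\"obius maps have nonvanishing derivative.
\item You get an explicit bound (at least $1/2$, tending to $1$) with no chart-comparison constants.
\item For a fixed branch the ratio is bounded below on all of $\widehat{\mathbb{C}}$. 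So the smallness of $V_p$ is only needed to keep you inside one access, whereas the paper's Taylor argument needs smallness intrinsically.
\end{itemize}
The paper's argument is more general: it would work for any holomorphic branch, even one with a critical point at $p$. That generality is not needed here.

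Your handling of the multivaluedness of $f(p)$ through accesses matches the paper's convention of working in $\mathcal{AP}$. Taking a union over accesses is harmless when there are finitely many of them. Otherwise you can simply state the bound per access, which is effectively what the paper does.
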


\begin{proof}Let $p\in S$ be a preperiodic point. By the properties of the Markov accesses associated with points in $S$, the corresponding branch of the Markov map $f_G$ can be extended holomorphically  to a sufficiently small neighborhood $V_p$ of $p$.
	
	By composing some conformal homeomorphisms $\gamma$ and $\eta$ of the unit ball $\mathbb{B}^{3}$, we can normalize $f$ by $F\left ( \xi \right )= \eta \circ f\circ \zeta ^{-1} \left ( \xi \right )$ with $F(0)=0.$ Let $k\geq 1$ be the order of the zero of $F$ at $0$. Taylor expansion of $F$ at $0$ as $$F\left ( \xi \right )= b_{k} \xi^{k} +O\left ( \xi^{k+1}  \right )  ,$$ where $b_{k}=F^{(k)}(0)/k!.$
	Observe that $$F^{'} \left ( \xi \right )= kb_{k}\xi^{k-1} +O\left ( \xi^{k}  \right ).$$
	For sufficiently small $\left | \xi \right |$ ,we have  $\left | F\left ( \xi \right )  \right |\sim \left | \xi \right |^{k}   $ and $\left | F^{'} \left ( \xi \right )  \right |\sim \left | \xi \right |^{k-1},$ and
	\begin{align}
		\frac{\left | F^{'} \left ( \xi \right )  \right |\left | \xi \right | }{\left | F\left ( \xi \right )  \right | }\sim 1 .\end{align}
	Hence, for sufficiently small $\left | \xi \right |,$
	\begin{align}\left | F^{'}\left ( \xi \right )   \right | \ge C \frac{\left | F\left ( \xi \right )  \right | }{\left | \xi \right | }, \end{align} where $C$ is a positive constant.
	
	Let $f\left ( p \right ) =q.$ Note that the spherical distance  $\left | z- p \right | _{\sigma }$ is equivalent to the Euclidean distance $ \left | \xi -0\right | $ in a  sufficiently small coordinate neighborhood of $p$,  we have that \begin{align}\left | f\left ( z \right ) - q \right | _{\sigma }\sim \left | F\left ( \xi  \right )   \right |\end{align}
	and the spherical derivative is equivalent to the local coordinate derivative \begin{align}
		\left | f^{'}\left ( z \right )   \right | _{\sigma } \sim \left | F^{'}\left ( \xi  \right )   \right | .
	\end{align}
	Furthermore, for any $z\in V_{p} \setminus \left \{ p \right \} ,$  we can find a positive constant $C$ such that
	\begin{align}\left | f^{'}\left ( z \right )   \right | _{\sigma } \ge C \frac{\left | f\left ( z \right ) - q \right |_{\sigma }  }{\left |z-p \right |_{\sigma }  } .\end{align}
	
	In conclusion, for any point $p\in S $, by(3.6-3.10), there exists a neighbourhood $V_{p}$ such that for all $z\in V_{p} \setminus \left \{ p \right \} ,$ we have $$A(V_{p})= \inf_{ z\in V_{p}} \frac{\left | f^{'} \left ( z \right )  \right | _{\sigma }  \, |z-p|_{\sigma}}{|f(z)-f  (p)|_{\sigma}} \geq C> 0.$$
\end{proof}

Let $p\in P$ be a  parabolic periodic point of $f$ with period $r$ and $V_{p}$ be a sufficient  small neighbourhood of $p,$ which implies $\lim_{k\to\infty} (f^{-r})^{k} (z) = p$ for any $z\in V_{rep, p} .$
The parabolic petal theorem(see \cite{MJh})implies that: Within any neighborhood of $p,$ attracting petals and repelling petals are alternating appearance.
Hence we can divide $V_{p}$ into attracting petals region and repelling petals region, denoted by $V_{att, p}$ and $V_{rep, p}$ separately.
We point out that the point $p$ is on the boundary of $V_{att, p}$ and $V_{rep, p}.$

For points $z\in V_{rep, p},$
there exists some $m\ge 1$ such that the $m$th forward image $f^{rm} \left ( z \right ) $ lies outside of $V_{p},$ which implies $\left | \left ( f^{r}  \right )^{'}  \right |_{\sigma }  > 1$ on $\overline{V}_{rep, p} \setminus \left \{ p \right \} .$
Hence for any point $z\in V_{rep,p} \setminus \left \{ p \right \} ,$  we can define  $m_{V}\left ( z \right )= \min \left \{ m\ge 0:f^{rm}\left ( z \right )\notin V_{rep, p} \right \} .$

\begin{lemma}Let $G$ be a geometrically finite Kleinian group with cusps.  Let $ P_{1}$ be the subset of Markov access space $\mathcal{AP}$ which contains all the parabolic period point of $f.$  Then for any $p\in P_{1}$ with period $r,$ there exists a sufficiently small repelling petal region $V_{rep, p}  $ of $p$ and a positive constant $B\left ( V_{rep,p}  \right ) $ such that
	$$\left | \left ( f^{rm_{V}\left ( z \right )  }  \right )^{ '}\left ( z \right )   \right | _{\sigma }> \frac{B\left ( V_{rep, p}   \right ) }{\left | z- p \right |_{\sigma }  } \:\:\:\  for  \:all \:z\in V_{rep,p} , $$ where $m_{V}\left ( z \right )= \min \left \{ m\ge 0:f^{rm}\left ( z \right )\notin V_{rep,p}   \right \} .$
\end{lemma}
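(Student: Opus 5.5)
The plan is to reduce to an explicit local normal form at $p$ and compute. Replace $f$ by $h=f^{r}$; on the relevant Markov branch $h$ is a single parabolic element $\gamma\in G_p$. So after conjugating by a M\"obius map $\eta$ sending $p$ to $\infty$, the map $h$ becomes a Euclidean translation $w\mapsto w+\tau$ (rank $1$). In the rank $2$ case we restrict to the one-faced access under consideration, where the branch is again one translation. Equivalently, in a coordinate $\xi$ with $p$ at $0$, we get $h(\xi)=\xi/(1-c\xi)$ with $c\neq 0$, so $h$ is conjugate to $w\mapsto w+c$ under $w=-1/\xi$. Working in $\xi$, the spherical metric is comparable to the Euclidean metric near $0$, so it suffices to prove $|(h^{m})'(\xi)|\geq B/|\xi|$ with $m=m_V(\xi)$, up to bounded distortion constants.

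Next we compute explicitly. We have $h^{m}(\xi)=\xi/(1-mc\xi)$, hence
\begin{align*}
(h^{m})'(\xi)=\frac{1}{(1-mc\xi)^{2}}=\frac{|h^{m}(\xi)|^{2}}{|\xi|^{2}}.
\end{align*}
Choose $V_{rep,p}$ to be a repelling petal of small radius $\varepsilon$, i.e.\ the part of $\{|\xi|<\varepsilon\}$ inside a sector around the repelling direction. By the parabolic petal theorem quoted above, iterates starting in $V_{rep,p}\setminus\{p\}$ leave the petal. We take $m=m_V(\xi)$, the first exit time. Then $h^{m}(\xi)\notin V_{rep,p}$ while $h^{m-1}(\xi)\in V_{rep,p}$.

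The key step, and the main obstacle, is a lower bound $|h^{m}(\xi)|\geq c_0\varepsilon$ at the exit time. An orbit could a priori leave the petal through its lateral sides near $0$ rather than through the outer arc. To rule this out, choose the petal in the $w$-coordinate as a half-plane region $\{\operatorname{Re}(w/c)<-R\}$ intersected with a symmetric sector around the negative $c$-axis. Translation by $c$ moves points straight toward the boundary line $\operatorname{Re}(w/c)=-R$. An orbit therefore exits only across this line, whose image near $\xi=0$ is a circle arc at distance $\asymp 1/R\asymp\varepsilon$ from $0$. Since one step changes $|w|$ by at most $|c|$, we get $|w(h^{m}\xi)|\leq R+O(1)$, hence $|h^{m}(\xi)|\geq c_0\varepsilon$. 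This is the point at which the Markov access structure matters: restricting to one access makes $h$ a single translation, not a group of them.

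Combining the pieces gives $|(h^{m})'(\xi)|\geq c_0^{2}\varepsilon^{2}/|\xi|^{2}\geq c_0^{2}\varepsilon/|\xi|$, because $|\xi|<\varepsilon$. If $m_V(\xi)=0$ only for points already outside the petal, the claim is vacuous; otherwise $m\geq1$. Translating back to the spherical metric costs only the bounded distortion constants of $\eta$ near $p$ and near the exit region. This yields $B(V_{rep,p})>0$ depending on $\varepsilon$, $c$ and these distortion constants, with the bound valid for all $z\in V_{rep,p}$.
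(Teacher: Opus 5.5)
\textbf{There is a genuine gap, at exactly the step you flagged as the main obstacle.} With your petal $\{\operatorname{Re}(w/c)<-R\}$ intersected with a sector around the negative $c$-axis, orbits do \emph{not} exit only across the line $\operatorname{Re}(w/c)=-R$.

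Write $v=w/c$, so $h$ acts by $v\mapsto v+1$. The lateral sides of the sector converge in the direction of the translation, so points near them leave through the sides, arbitrarily far out. Concretely, fix the half-angle $\theta$, take $0<\epsilon'<\tan\theta$ and $T$ large, and set $v_0=-T+i(T\tan\theta-\epsilon')$. Then $v_0$ lies in your petal but $v_0+1$ does not, so $m_V=1$. Your own identity gives
\[
\bigl|(h^{m_V})'(\xi_0)\bigr|=\frac{|h(\xi_0)|^2}{|\xi_0|^2}=\frac{|v_0|^2}{|v_0+1|^2}\longrightarrow 1 ,
\]
whereas $B/|\xi_0|=B|c|\,|v_0|\to\infty$. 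So for this choice of $V_{rep,p}$ the inequality of the lemma is actually false, not merely unproved. The bound $|h^{m_V}(\xi)|\geq c_0\varepsilon$ fails precisely for these lateral exits.

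\textbf{The repair} is to replace the sector by a strip: take $V_{rep,p}=\{\operatorname{Re}(w/c)<-R,\ |\operatorname{Im}(w/c)|<H\}$. In the $\xi$-coordinate this is a region bounded by circles tangent at $p$, not a sector. Translation by $c$ preserves $\operatorname{Im}(w/c)$. Hence every orbit leaves across $\operatorname{Re}(w/c)=-R$ and lands in the bounded set $\{-R\le\operatorname{Re}(w/c)<-R+1,\ |\operatorname{Im}(w/c)|<H\}$. This gives $|h^{m_V}(\xi)|\geq 1/(|c|\sqrt{R^2+H^2})$. Since $|\xi|<1/(|c|R)$ on the petal, your identity $|(h^{m})'(\xi)|=|h^{m}(\xi)|^2/|\xi|^2$ then yields the bound $B/|\xi|$. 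The spherical-metric comparison near $p$ finishes the proof as you describe.

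This corrected region is exactly the paper's $V_{R,H}=\{\operatorname{Re}\alpha>R,\ |\operatorname{Im}\alpha|<H\}$ in the Fatou coordinate $\alpha(u)=1/(\lambda u)$. The paper uses compactness of $\{R-1\le\operatorname{Re}\alpha\le R,\ |\operatorname{Im}\alpha|\le H\}$ to bound $\alpha'$ at the exit point. Your identity is the paper's formula $(F^{m_V})'(u)=\alpha'(u)/\alpha'(F^{m_V}(u))$ written out explicitly. So once the petal is chosen as a strip, your argument coincides with the paper's.
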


For the simplify, we will use the hyperbolic upper half plane space $\mathbb{H}^{3}$ to prove the  lemma.
\begin{proof} Suppose  $p$ is an $r$-periodic point of $f,$ i.e. $p=f^{r}\left ( p \right )$ is a fixed point of $f^{r}.$
	Note that $p$ is also a parabolic fixed point of the Kleinian group $G.$ By conjugating $G$ with some M\"obius transformations,  we  can assume $p=\infty$  and the  parabolic element $T_{\lambda }(z)= z+\lambda,\: \lambda\ne 0 $ fixing $p$ is in $G.$

	By the construction of the Markov map and the definition of the accesses corresponding to $p$, after shrinking the cusp neighborhood, the return map branch $f^r$ coincides with $T_\lambda^{-1}$.  Hence for sufficiently large $|z|$, we have $f^{r}(z)=T^{-1}(z).$
	By the inversion transformation $\eta( u)=\frac{1}{u},$ the local behaviour of $f^{r}$ near $p$ can be represented as the local property of $F=\eta\circ f^{r}\circ\eta^{-1}(u)$ near $0.$ By some easy calculation, we know   $F(u)=\frac{u}{1-\lambda u} $ in a neighbourhood $0.$
	The Taylor expansion of the parabolic rational mapping $F$ near $0$ is $$F\left ( u \right ) = u+\lambda u^{2} +O\left ( u^{3}  \right ) .$$
	Hence $u$ has one attracting petal region and one repelling petal region near $0$.
	
	We can have a sufficiently small neighborhood
	\begin{align} V_{R,H} = \left \{ u:\mathfrak{Re}(\alpha(u))> R,\left | \mathfrak{Im}(\alpha(u))  \right |< H   \right \}
	\end{align} of $0$ in the repelling petal of $F$ by taking sufficiently large $R$ and $ H$
	to ensure that $V_{R, H}\cap K= \emptyset $ and for different $p,q\in P,$ $\overline{V}_{rep,p} \cap \overline{V}_{rep,q} = \emptyset ,$	
	where $\mathfrak{Re}(\alpha(u))$ denotes the real part of $\alpha(u)$ and $\mathfrak{Im}(\alpha(u))$ denotes the imaginary part of $\alpha(u).$
	It is easy to see that $V_{R,H}$ is a sufficiently small region of $0,$ denoted by $V_{rep,0}=V_{R,H}.$
	According to the Parabolic Linearization Theorem (see \cite{MJh}), we know that there is one and, up  to composition with a translation of $\mathbb{C}$, only one conformal embedding $\alpha:V_{rep, 0}\rightarrow \mathbb{C}$ which satisfies the Abel function equation $\alpha(F^{(-1)}(u))=1+\alpha(u).$ A simple calculation yields $\alpha(u)=\frac{1}{\lambda u}.$
	We call $\alpha(u)$ the Fatou coordinate of $u.$ It is easy to see that the Fatou coordinates satisfy $\alpha(\left ( F^{(-m)}\left ( u \right )   \right )) =\alpha(u)+m .$

	Note that for $u\in V_{rep,0},$ $F$ is expressed in Fatou coordinates satisfying  \begin{align}
		\alpha\left( F^{m}\left ( u \right )   \right ) =\alpha( u )-m .
	\end{align}
	Hence, let $m_{V}\left ( u \right )= \min \left \{ m\ge 0:F^{m}\left ( u \right )\notin V_{rep,0}    \right \} .$ By translation doing not change the imaginary part, we know that $m=m_{V}$ is the first time at which the trajectory leaves $ V_{rep,0}  ,$ so we have: $$\mathfrak{Re} \left ( \alpha(u) -\left ( m_{V}-1 \right )  \right ) > R \:\:and\:\:\mathfrak{Re}\left ( \alpha(u)-m_{V} \right ) \le R .$$
	Note that $\left \{ u:R-1\le \mathfrak{Re}(\alpha(u))\le R, \left | \mathfrak{Im}(\alpha(u)) \right |   \le H   \right \}  $ is a compact set which is away from  $0.$ Consequently, the derivatives of $\alpha$ on this compact set  have an upper bound. Thus there exists  some $0< C_{1}<\infty  $ such that
	\begin{align}\left | \alpha^{'}\left ( F^{m_{V}}  \right )   \right |\le C_{1}  .\end{align}
	Since $F(u)=\frac{u}{1-\lambda u}$ in a neighbourhood $0,$ we obtain
	$F^{m_{V}} \left ( u \right ) = \frac{u}{1-m_{V}\lambda u} $  for $u$ near $0.$
	Furthermore, note that $$\alpha( F^{m_{V}}\left ( u \right )  ) =\alpha( u )-m .$$
	Thus \begin{align}
		\alpha^{'} ( F^{m_{V}} \left ( u \right )   )\left ( F^{m_{V}}   \right )^{'}\left ( u \right ) = \alpha^{'}(u).
	\end{align}
	We have
	\begin{align} \left ( F^{m_{V}}   \right )^{'}\left ( u \right ) =  \frac{ \alpha^{'} \left ( u \right )  }{\alpha^{'} \left ( F^{m_{V}} \left ( u \right )  \right )}.
	\end{align}
	By the asymptotic properties of Fatou coordinates: $\left | \alpha ^{'}\left ( u \right )   \right | \sim  \frac{1}{\lambda}\left| u \right |^{-2}  .$
	
	According to (3.13-3.15), there exists $C_{2}> 0$ such that $$\left | \left ( F^{m_{V}}  \right )^{'}\left ( u \right )    \right | \ge C_{2} \left | u \right | ^{-2} .$$

	For any $z\in V_{rep,p},$ observe that the transformation $z=\eta(\left ( u \right )) = 1/u$ and the spherical metric $\sigma = 2\left | \mathrm{d}z \right | /\left ( 1+\left | z \right | ^{2}  \right ) $, we have
	\begin{align}\left | z-p \right | _{\sigma }\sim \left | u \right | \end{align}
	and
	\begin{align}\left | \left ( f^{rm_{V}}  \right ) ^{'}\left ( z \right )   \right | _{\sigma }= \frac{\sigma \left ( F^{m_{V}}\left ( u \right )   \right ) }{\sigma \left ( u \right ) }\left | \left ( F^{m_{V}}  \right )^{'} \left ( u \right )   \right |.
	\end{align}
	Note that $u=\eta^{-1}(z)$ ,there exists a constant $C_{3} > 0$ such that $$\frac{\sigma \left ( F^{m_{V}}\left ( u \right )   \right ) }{\sigma \left ( u \right ) }\ge C_{3} $$
	and
	\begin{align}\left | \left ( f^{rm_{V}}  \right ) ^{'}\left ( z \right )   \right | _{\sigma }\ge C_{3}\left | \left ( F^{m_{V}}  \right )^{'} \left ( u \right )   \right |\ge C_{3}C_{2}\left | z-p \right | _{\sigma }^{-2}   .\end{align}
	Since $V_{\mathrm{rep},p}$ can be chosen sufficiently small, there exists $\delta>0$ such that $$|z-p|_{\sigma}\leq \delta
	\qquad\text{for all }z\in V_{\mathrm{rep},p}.$$
	Hence,  for  $\:all \:z\in V_{rep,p} $, let $B\left ( V_{rep,p}  \right )=C_{3}C_{2}/\delta> 0 $, we obtain $$\left | \left ( f^{rm_{V}\left ( z \right )  }  \right )^{ '}\left ( z \right )   \right | _{\sigma }> \frac{B\left ( V_{rep,p}   \right ) }{\left | z- p \right |_{\sigma }  }. $$
	
\end{proof}

The following lemma may be well known by experts, however we do not find any references, so we give a short proof here.

\begin{lemma} Suppose $p\in P$ is a parabolic periodic point of $f$ with period $r$ and $$\mathcal{O} \left ( p \right ) = \left \{ p,f\left ( p \right ),...,f^{r-1} \left ( p \right )   \right \} $$ denotes its periodic orbits. Let $V_{rep,p} $ be a repelling petal neighborhood of $p.$ Then $$\bigcap_{L=0}^{\infty } \left \{ x\in \overline{V}_{rep,p} :f^{jr}\left ( x \right ) \in \overline{V}_{rep,p} ,0\le j\le L    \right \}= \left \{ p \right \}  .$$ Moreover,for any $\varepsilon > 0$, there exists a finite constant $L_{0}\ge 1 $  such that any point in $$C_{\varepsilon } = \left \{ x\in \overline{V}_{rep,p} :\left | x- p \right |_{\sigma }  \ge \varepsilon  \right \} $$ must leave $\overline{V}_{rep,p} $ after at most $L_{0} $ iterations.
\end{lemma}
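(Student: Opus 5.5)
We work in the normalized picture used in the proof of Lemma 3.6. Conjugate so that $p=\infty$ in $\mathbb{H}^3$, then pass to $u=1/z$. Near $0$ the return branch $f^r$ becomes $F(u)=\frac{u}{1-\lambda u}$. On the repelling petal $V_{rep,0}=V_{R,H}$ we have the Fatou coordinate $\alpha(u)=\frac{1}{\lambda u}$, which conjugates $F$ to the translation $w\mapsto w-1$, so that $\alpha(F^{j}(u))=\alpha(u)-j$. In this coordinate the closed petal is
\[
\overline{V}_{rep,0}=\{u:\mathfrak{Re}\,\alpha(u)\ge R,\ |\mathfrak{Im}\,\alpha(u)|\le H\}\cup\{0\},
\]
where the point $0$ corresponds to $\alpha=\infty$. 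The local chart distortion is bounded, since $|z-p|_\sigma\sim|u|$ by (3.16). It therefore suffices to prove both assertions for $F$ on $\overline{V}_{rep,0}$, and the spherical-metric statements then transfer back.

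For the first assertion, the inclusion $\{p\}\subset\bigcap_L$ is immediate because $f^{r}(p)=p$. For the reverse inclusion, take $x\neq p$ in $\overline{V}_{rep,p}$ and write $w=\alpha(u)$, which is finite. Then $\alpha(F^{j}(u))=w-j$. Once $j>\mathfrak{Re}\,w-R$ we get $\mathfrak{Re}(w-j)<R$, so $F^{j}(u)\notin\overline{V}_{rep,0}$. Hence $x$ fails the defining condition for $L=\lceil \mathfrak{Re}\,w-R\rceil+1$, and it is not in the intersection. The decreasing intersection is therefore exactly $\{p\}$.

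For the uniform statement, note that $|x-p|_\sigma\ge\varepsilon$ gives $|u|\ge c\varepsilon$ for a constant $c$ depending only on the chart. This yields
\[
|\alpha(u)|=\frac{1}{|\lambda||u|}\le \frac{1}{|\lambda|c\varepsilon}.
\]
So $\mathfrak{Re}\,\alpha(u)\le M_\varepsilon:=1/(|\lambda|c\varepsilon)$ uniformly on $C_\varepsilon$. By the translation formula every point of $C_\varepsilon$ leaves $\overline{V}_{rep,p}$ after at most $L_0=\lceil M_\varepsilon-R\rceil+1$ iterates of $f^r$, and we take $L_0\ge1$. If iterations are counted in $f$ rather than $f^r$, the bound becomes $rL_0$.

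The main obstacle is justifying that the orbit stays on the single branch $f^r=T_\lambda^{-1}$ while inside the petal, so that the explicit formula for $F^{j}$ is valid. This is the same shrinking-of-the-cusp-neighbourhood fact already used in Lemma 3.6, and we invoke it the same way. The rank-2 case is handled identically: there $\lambda$ is replaced by the generator of the access under consideration, and we work with each one-faced access separately in the access space $\mathcal{AP}$.
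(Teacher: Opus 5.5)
Your proof is correct, but it takes a different route from the paper's.

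\textbf{The paper's argument.} It is a soft one. From $|F'|_\sigma>1$ on $\overline{V}_{rep,p}\setminus\{p\}$ it infers $|F(x)-p|_\sigma>|x-p|_\sigma$. It then argues by contradiction. A nontrivial orbit staying forever in $\overline{V}_{rep,p}$ would have distances $d_j$ increasing to a limit $d>0$. That gives an accumulation point $y\neq p$ with $|F(y)-p|_\sigma=|y-p|_\sigma$, which contradicts strict expansion. The uniform bound $L_0$ comes from a second compactness argument. Points $z_n\in C_\varepsilon$ that stay $n$ steps accumulate at some $z_*\in C_\varepsilon$ whose entire forward orbit stays in the petal, forcing $z_*=p$.

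\textbf{Your argument.} You use the exact conjugacy $\alpha\circ F=\alpha-1$ on the half-strip $V_{R,H}$. Both assertions then reduce to counting how many unit steps $\mathfrak{Re}\,\alpha$ needs to drop below $R$. The bound $|u|\ge c\varepsilon$ caps $\mathfrak{Re}\,\alpha$ on $C_\varepsilon$.

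\textbf{What each approach buys.} Your version is quantitative: it gives an explicit $L_0=O(1/\varepsilon)$. It also avoids the paper's mean-value ``equality'' $|F(x)-F(p)|_\sigma=|F'(\xi)|_\sigma|x-p|_\sigma$, which does not hold for holomorphic maps in general. In your setting the increase of distance to $p$ is just a consequence of the translation.

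The price is that you depend on the specific petal shape $V_{R,H}$. At minimum you need $\overline{V}_{rep,p}\setminus\{p\}$ to lie in a right half-plane of the Fatou coordinate. You also need $f^r$ to equal $T_\lambda^{-1}$ on the whole closed petal. The paper's compactness argument, by contrast, works for any compact petal on which $F$ is continuous and strictly increases the distance to $p$.

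Theorem~3.8 uses exactly the petals built in Lemma~3.6, so your specialization costs nothing there. The branch issue you flag is also implicitly needed by the paper, which uses continuity of $F$ on $\overline{V}_{rep,p}$.
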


\begin{proof} Suppose $p\in P$ is a repelling periodic point and $V_{rep,p}$ is a repelling petal of $p.$ For every point $x\in \overline{V}_{rep,p} \setminus \left \{ p \right \} ,$ there exists some $L\ge 1$ such that its $L$-th forward image $f^{rL} \left ( x \right ) $ lies outside $V_{rep,p}.$
	Denoting by $F= f^{r} ,$ then for every $x\in \overline{V}_{rep,p} \setminus \left \{ p \right \} ,$ we have  $\left | F^{'} \left ( x \right )  \right | _{\sigma } > 1$ and  $$\left | F\left ( x \right )-p  \right |_{\sigma } =  \left | F\left ( x \right )-F\left ( p \right )  \right |_{\sigma }= \left | F^{'}\left ( \xi  \right )   \right | _{\sigma }\left | x- p \right |_{\sigma }> \left | x- p \right |_{\sigma }.$$
	Note that $F\left ( p \right ) = p,$ we have $$ p  \in \left \{ x\in \overline{V}_{rep,p}  :F^{j}\left ( x \right ) \in \overline{V}_{rep,p} \:\:for\:\:all\:\:j\ge 0  \right \} .$$
	
	Suppose there exists  another point $x\ne p$ in $\overline{V}_{rep,p}  $ such that $F^{j} \left ( x \right ) \in \overline{V}_{rep,p}  $ for all $j\ge 0.$
	Let $d_{j}= \left | F^{j} \left ( x \right )-p  \right | .$ It is easy to see that $d_{j+1}> d_{j} .$
	Furthermore, for any positive integer $j,$  $F^{j} \left ( x \right )$ belong to the compact set $\overline{V}_{rep,p} ,$ we have $$d_{j}\le \max_{x\in \overline{V}_{rep,p} } \left | x- p \right |_{\sigma } < \infty   .$$ Thus the sequence $\{d_{j}\}$ has a limit $d$ as $j\rightarrow\infty.$
	The monotonicity of $d_{j}$ implies that $d\ge d_{0} = \left | x- p \right | _{\sigma } > 0.$
	
	Observe that $\overline{V}_{rep,p}  $ is compact, for the point $x\in \overline{V}_{rep,p} $, $\{F^{j}(x)\}$ has a convergent subsequence $\{F^{j_{l}}(x)\}$ which converge to some  $y\in \overline{V}_{rep,p}  .$
	Thus $$\left | y-p \right |_{\sigma } =\lim_{l \to \infty}\left | F^{j_{l} }\left ( x \right )-p   \right | = d> 0 ,$$ thus $y\ne p.$
	Therefore we have $$\left | y-p \right | _{\sigma } =\lim_{l \to \infty}\left | F^{j_{l+1} }\left ( x \right )-p   \right |=\lim_{l \to \infty}\left | F\left ( F^{j_{l} }\left ( x \right )  \right )-p   \right |  =\left | F\left ( y \right ) -p \right |_{\sigma } ,$$
	
	However, by the expansibility of points in $V_{rep,p}$,    $\left | F\left ( y \right ) - p \right | _{\sigma }>  \left | y-p \right | _{\sigma } .$ This leads a contradiction. Thus there no such point $x\in \overline{V}_{rep,p} \setminus \left \{ p \right \}  $ such that whose forward $F$-orbit remains entirely in $\overline{V}_{rep,p} .$ Consequently, the set $$ \left \{ x\in \overline{V}_{rep,p}  :F^{j}\left ( x \right ) \in \overline{V}_{rep,p} \:\:for\:\:all\:\:j\ge 0  \right \} $$ contains only a point $p.$
	
	Assume that there is $\varepsilon_{0}>0,$  for any  $n\ge 1,$ there exists a point $z_{n} \in C_{\varepsilon } , $ such that $$z_{n} ,F\left ( z_{n} \right ),...,F^{n}\left ( z_{n} \right )\in \overline{V}_{rep,p}   .$$
	Observe that $C_{\varepsilon }$ is compact, after passing to a subsequence, we may assume that $z_{n}\to z_{\ast }.$ So $$\left | z_{\ast }  -p \right |_{\sigma }  \ge \varepsilon > 0.$$
	
	Fix any $j\ge 0.$ Note that $F^{j} \left ( z_{n}  \right ) \in \overline{V}_{rep,p} $ for any positive integer $n.$
	Since $z_{n}\to z_{\ast } $ as $n\rightarrow +\infty,$ it follows that
	$$F^{j} \left ( z_{n}  \right )\to   F^{j} \left ( z_{\ast }  \right )\in\overline{V}_{rep,p}.$$
	We have $F^{j}\left ( z_{\ast }  \right )\in \overline{V}_{rep,p}$ for all $j\ge 0$. However, the only point whose forward orbit can remain in $\overline{V}_{rep,p}$ forever is $p.$ Hence $z_{\ast } = p,$ which contradicts the fact that $z_{\ast} \in C_{\varepsilon } $ and $\left | z_{\ast }-p   \right |_{\sigma }  \ge \varepsilon > 0$  This contradiction leads to a conclusion of the lemma.
\end{proof}


If $G$ is a co-compact Kleinian group, Rocha \cite{R} had proved that the corresponding Markov map $f$ corresponding to $G$ is (eventually) expanding on $\Lambda(G)$.
Let $K$ be a subset of $\mathbb{S}^{2}$ constructed as Theorem 3.1. It is well known that if there exist some $N\ge 0$ such that, for any $y\in \Lambda \left ( G \right ) $ and  $n\ge N$,  $f^{n} \left ( y \right ) \notin K,$ then $y$ is a parabolic fixed point of $G.$
For each point $x\in K,$  let $m\left ( x \right ) $ denote the smallest $m\ge 1$ such that $f^{m} \left ( x \right ) \in K,$ and $f_{K} \left ( x \right ) = f^{m\left ( x \right ) }( x).$ We will extend  Bowen and Series' work \cite{BS} of Fuchsian groups  and Rocha's work of co-compact Kleinian in \cite{R} to the case of  Kleinian groups with parabolic elements. We restate Theorem 1.1 by the model of  the hyperbolic unit ball space  $\overline{\mathbb{B}}^{3}$ as follows.
\begin{theorem}
	Let $G$ be a geometrically  Kleinian group acting on the hyperbolic unit ball space  $\mathbb{B}^{3}$ with parabolic elements. Then for the  subset $K$ constructed in Theorem 3.1, the first return map $f_{K}$ induced by the Markov map $f_{G}$ on $K$ is (eventually) expanding.
\end{theorem}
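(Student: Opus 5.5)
The plan is to follow the Bowen--Series strategy: decompose each excursion of an orbit of a point of $K$ into an excursion inside a repelling petal and a bounded number of iterates spent away from the cusps. Lemmas 3.5--3.7 control the first part, and a Rocha-type expansion estimate for words avoiding the cusps controls the second.

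The first step is to fix the data. For each $p\in P_1$ we take the repelling petal $V_{rep,p}$ of Lemma 3.6 and the neighborhoods $V_q$ of Lemma 3.5 for the finitely many preperiodic points $q\in S$ on the boundary of the Markov partition. We then shrink the $U_p$ in Theorem 3.1 so that every point of $\Lambda(G)$ near $p$ lies either in $U_p$ or in the closure of the petal region. Set $W=\bigcup_{p}V_{rep,p}$.

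The key intermediate claim is that for points of $\Lambda(G)$ whose orbit stays in the complement of $W$ (and of the $U_p$), some fixed iterate $f^{N_0}$ has $|(f^{N_0})'|_\sigma\ge\beta_0>1$. This is the compact, parabolic-free part of the Markov partition. Here I would argue exactly as Rocha does in the cocompact case. The cylinder sets $I(\gamma_0\cdots\gamma_n)$ avoiding the cusp horoballs have diameters tending to $0$ uniformly (Theorem 2.2 of \cite{R}). Since $f^n$ is a M\"obius map $\gamma_{n-1}^{-1}\cdots\gamma_0^{-1}$ on such a set, and its image contains a Markov element of diameter bounded below, Koebe-type distortion for M\"obius maps gives $|(f^n)'|_\sigma\gtrsim \operatorname{diam}(f^nI)/\operatorname{diam}I\to\infty$ uniformly. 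Choosing $N_0$ with this bound exceeding $2$ proves the claim.

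Next we handle the excursions. Take $x\in K$ and follow $f^j(x)$ until the return time $m(x)$. The orbit segment splits into three kinds of pieces: pieces outside $W$, covered by the claim; entries into a petal $V_{rep,p}$; and passages through the finitely many boundary points $q\in S$, which are bounded by Lemma 3.5. Each entry into a petal lasts $r\,m_V(z)$ steps, and Lemma 3.6 gives derivative at least $B/|z-p|_\sigma\ge B/\operatorname{diam}V_{rep,p}$. By shrinking the petal this factor can be made larger than any prescribed constant. The number of consecutive non-expanding iterates outside $W$ is at most $N_0$. Lemma 3.7 guarantees that points at distance $\ge\varepsilon$ from $p$ leave the petal within $L_0$ iterations, so the part of the petal excursion not captured by Lemma 3.6 contributes a factor bounded below by $c^{L_0}$ with $c=\inf|f'|_\sigma>0$ on the compact pieces. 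Multiplying by the chain rule, with the total loss in each block bounded below by a constant $c_1$, we get $|(f_K^{N})'(x)|_\sigma\ge c_1\,\beta_0^{\lfloor N/N_0\rfloor}\cdot(\text{petal gains})$. This exceeds $1$ once $N$ is large, which is eventual expansion.

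The main obstacle is the transition between the petal estimate and the outside region. Lemma 3.6 measures $|z-p|_\sigma$ at the entry point, but an orbit may enter the petal at a point far from $p$ and exit quickly, so no large gain is available there. My fix is to count such shallow entries as part of the bounded-time region: by Lemma 3.7 they last at most $L_0$ iterates, and by Theorem 3.1's choice of $U_p$, points of $K$ near $p$ are excluded. The shallow entries therefore merge into the compact analysis. I also expect a second issue. The claim for the region outside $W$ requires disjointness of the closed petals from the rest of the Markov partition's cusp pieces, which is where the choice $V_{R,H}\cap K=\emptyset$ in the proof of Lemma 3.6 is used.
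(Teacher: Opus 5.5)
Your overall architecture (petal excursions controlled by Lemmas 3.5--3.7, a uniform expansion estimate on the rest) matches the paper's. The gluing step, however, has a genuine gap.

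Your compact-part claim gives expansion only for orbit segments that stay outside $W$ for $N_0$ consecutive iterates. An orbit between returns to $K$ can alternate between petal visits and stretches outside $W$ of length $\ell_i<N_0$. On those stretches you only know $|(f^{\ell_i})'|_\sigma\ge c^{\ell_i}$, and $c=\inf|f'|_\sigma$ may be $<1$. The correct exponent is therefore $\sum_i\lfloor \ell_i/N_0\rfloor$, which can be $0$, not $\lfloor N/N_0\rfloor$. The per-block losses also multiply, so a single constant $c_1$ is not what the chain rule gives.

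You propose to pay for these losses with petal gains, but the assertion that $B(V_{rep,p})/\operatorname{diam}V_{rep,p}$ becomes arbitrarily large as the petal shrinks is false. In the proof of Lemma 3.6, $C_2$ comes from the exit strip $R-1\le\mathfrak{Re}\,\alpha\le R$ and satisfies $C_2\lesssim(\operatorname{diam}V)^2$, so $B=C_2C_3/\delta\lesssim\operatorname{diam}V$. Concretely, take $F(u)=u/(1-\lambda u)$ and $\alpha(u)=1/(\lambda u)$. A point entering with $R<\mathfrak{Re}\,\alpha(u)\le R+1$ leaves after one period with total gain $|F'(u)|=|\alpha|^2/|\alpha-1|^2\approx (R/(R-1))^2$, which tends to $1$ as the petal shrinks.

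Large gains occur only for entries at depth $\varepsilon\ll\operatorname{diam}V$, and they are only polynomial in $1/\varepsilon$. Once you merge the shallower part into the bounded-time region, the constant $L_0(\varepsilon)$ of Lemma 3.7 is of order $1/\varepsilon$, since a point at distance $\varepsilon$ needs about $1/\varepsilon$ periods to leave. Your loss bound $c^{L_0(\varepsilon)}$ is then exponentially small in $1/\varepsilon$ (when $c<1$), so the constants chase each other.

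The missing idea is Bowen's device, which is how the paper avoids any compensation: make every block expand by itself.
\begin{itemize}
\item Fix the periodic petals $V_v$ first, so that $B(V_v)$ is fixed.
\item Then shrink the preperiodic neighbourhoods $V_p$ so that $B(V_v)\,|x-p|_\sigma^{-1}\prod_{k<n(p)}A(V_{f^k(p)})>\eta$ for $x\in V_p$.
\item For $x\in V_v$ itself, the full excursion $f^{rm_V(x)}$ gains a uniform $C_2>1$, because $|(f^r)'|_\sigma>1$ on $\overline{V_v}\setminus\{v\}$. A gain only slightly above $1$ suffices in this scheme.
\item On the compact complement $\overline W$ of these neighbourhoods, taken in the access space so that $P\cup S$ is really removed, prove the \emph{pointwise} bound $\inf_{y\in\overline W}\max_{1\le n\le N_2}|(f^n)'(y)|_\sigma\ge1+\delta$. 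This follows from pointwise expansion, continuity and a finite subcover.
\end{itemize}
Unlike your version, the pointwise bound does not care whether the orbit of $y$ enters a petal before the expanding time. Your cylinder/Koebe argument cannot give it, since $f^n(I)$ may then be a tiny partition element near a cusp.

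Together these yield $g(x)=f^{t(x)}(x)$ with $|g'(x)|_\sigma\ge\eta>1$ at every point. Lemma 3.7 bounds by some $N$ the number of visits to $K$ inside one $g$-block. The chain rule then gives $|(f_K^{M})'(x)|_\sigma\ge\eta^{M/N}\alpha^{-N}$ with $\alpha=\sup|f'|_\sigma$.
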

Drawing on the method from Bowen(\cite{IM}, Lemma 3), we have

\begin{proof}
	To prove the theorem we only need to prove the existence of universal constants $M_{0}$ and $C> 1$ such that $\left | \left ( f^{M_{0}}_{K}   \right )^{'} \left ( x \right )   \right |_{\sigma }   \ge  C.$
	
	For every point $x\in K,$ we let $m\left ( x \right ) $ denote the smallest $m> 1$ such that $f^{m} \left ( x \right ) \in K,$ and $f_{K} \left ( x \right ) = f^{m\left ( x \right ) } (x).$ Let  $\mathfrak{R}$ be the Markov partitions for the group $G.$  Recall that $P$ is the set of all parabolic periodic points or a point in the periodic orbit of some parabolic periodic point of $f$. Let $S$ be the set of all strictly pre-parabolic periodic points in $\partial\mathfrak{R}.$ For convenience, we split the parabolic points in \(S\) and \(\mathcal P\) according to their Markov accesses. Thus, the same geometric point corresponding to different Markov accesses is regarded as two distinct points.
	Note that $f\left ( S\cup P \right ) \subset S\cup P$ and $\left ( P\cup S \right ) \cap K= \emptyset.$ For any $x\in \Lambda _{c} \left ( G \right )$  and for $\eta>1,$ there exists a positive integer $t(x)$ such that the conformal derivative of $f^{t\left ( x \right ) }$ is bigger than $\eta.$ Denoted by $g\left ( x \right )= f^{t\left ( x \right ) }$, we have $\left | g^{'} \left ( x \right )  \right | _{\sigma } \ge \eta > 1.$
	
	Fix a real number $C>1$. For $x\in K$, let $M(x)$ be the smallest positive integer such that the conformal derivative of $f_{K}^{M(x)}(x)$ is bigger than $C$. The existence of such $M(x)$ will be given latter.
	
	In the following, we will  use $g\left ( x \right )$ to partition the $f$ orbits between $x$ and  $f_{K}^{M(x)}\left ( x \right ) $ as
	$$x,\; g(x),\; g^{2}(x),\; \dots,\; g^{j}(x),\; f_{K}^{M(x)}(x),  g^{j+1}(x)\dots.$$
	Here $j$ is the positive integer such that $f_{K}^{M(x)}(x)$ lies between $g^{j}(x)$ and $g^{j+1}(x).$

	Let $\alpha = \sup\{ \left | f^{'} \left ( y \right )   \right |_{\sigma }   : y\in \Lambda (G)  \} $ which is finite because $\Lambda (G)$ is compact and $f$ is piecewise smooth. Let $N$ be the number of $f-$orbits of $x$ between $f_{K}^{M(x)}\left ( x \right )$ and $g^{j+1}(x)$(later we will show that $N$ is bounded by a constant which does not depend on $x$). Note that $\left | g^{'}\left ( x \right )   \right |_{\sigma } \ge \eta> 1 .$ Then we have
	$$\left | \left ( f^{M(x)}_{K}   \right )^{'} \left ( x \right )   \right |_{\sigma }   \ge \eta^{\,j+1} \cdot \alpha^{-N} \ge \eta^{\,M(x)/N} \cdot \alpha^{-N}.$$
	Hence, for any $x\in K$ and for some $C > 1$, there must exists a sufficiently large $M$ such that $$\left | \left ( f^{M}_{K}   \right )^{'} \left ( x \right )   \right | _{\sigma }  \ge  C > 1,$$  and denotes  the smallest such integer  number $M$ by $M(x).$
	Since for all $0\le k\le j,$ the $f-$orbits of $x$ from $g^{k} \left ( x \right ) $ to $g^{k+1} \left ( x \right ) $ enter $K$ at most $N$ times, and $f_{K}^{M(x)}\left ( x \right ) \in K ,$ we obtain $M(x) \le N(j+1).$
	We only need to show that $M(x)$  and $N$ are bounded constants which do not depend on $x\in K.$
	To do so, just as McMullen did  in \cite{Mc3}, we will divide $\Lambda \left ( G \right ) $ into to two subsets $K_{1}$ and $K_{2}$.
	
	First, for each parabolic periodic point $v\in P,$ we take  a repelling petal neighborhood $V_{v} =V_{rep,v}  $ as the proof of Lemma 3.6 does. For any parabolic pre-periodic point $p\in S,$ let $V_{p} $ be a neighbourhood of $p$ satisfying the conditions of Lemma 3.5, and additionally $f\left ( V_{p}  \right ) \subset V_{f\left ( p \right ) } $.
	
	Let $ K_{1}= \bigcup_{p\in S\cup P }^{} V_{p} $ be a subset of $\Lambda(G).$  For every $p\in S\cup P$  and $x\in V_{p},$ let $v = f^{n(p)}(p),$ where $n(p)$ is the smallest integer such that $v=f^{n}(p)$ is a periodic of $f$ with order $r$. Then $f^{n(p)} (x) \in V_{v}$ and let $m=m_{V} (x)$ be the smallest positive integer such that $f^{rm}(f^{n(p)} (x)) \notin V_{v}.$ We have
	
	\begin{align}\left | \left ( f^{rm(y)+n(p)}  \right )^{'} (x)  \right |_{\sigma } = \left | \left ( f^{rm(y)}  \right )^{'} (f^{n(p)}(x))  \right |_{\sigma }\prod_{k=0}^{n(p)-1} \left | f^{'} (f^{k}(x))  \right |_{\sigma }. \end{align}

	By the Markov property of $f$, for any $x\in V_{p} ,$ we have $f^{n\left ( p \right ) }(x)\in f^{n\left ( p \right ) }(V_{p}) (\subset V _{v} )$. Furthermore, by Lemma 3.6, there exists a constant $B(V_{v})$ such that
	
	\begin{align}\left | \left ( f^{mr}  \right )^{'}\left ( f^{n\left ( p \right ) }(x) \right )    \right | _{\sigma }  \ge \frac{B(V_{v})}{|f^{n\left ( p \right ) }(x)-v|_{\sigma}}. \end{align}
	
	For any $k\in \left [ 0,n\left ( p \right )- 1  \right ] $, by Lemma 3.5, we  obtain
	
	\begin{align}A(V_{f^{k} (p)})= \inf_{ f^{k} (x)\in V_{f^{k}( p)}} \frac{\left | f^{'} \left ( f^{k} (x) \right )  \right | _{\sigma }  \, |f^{k} (x)-f^{k}( p)|_{\sigma}}{|f^{k+1}(x)-f^{k+1}  (p)|_{\sigma}} > 0. \end{align}

	Substituting (3.20) and (3.21) into (3.19), we obtain
	\begin{align*}&\left | \left ( f^{rm}  \right )^{'}\left ( f^{n(p)}(x) \right )  \right | _{\sigma }\prod_{k=0}^{n(p)-1} \left | \left ( f^{}  \right )^{'}\left ( f^{k}(x) \right )   \right | _{\sigma }\\
		&\ge \frac{B(V_{v})}{|f^{n(p)}(x) - v|_{\sigma}} \prod_{k=0}^{n(p)-1} \frac{A(V_{f^{k}(p)})\, |f^{k+1}(x) - f^{k+1}(p)|_{\sigma}}{|f^{k}(x) - f^{k}(p)|_{\sigma}}\\
		&\ge \frac{B(V_{v})}{|x-p|_{\sigma}} \prod_{k=0}^{n(p)-1} A(V_{f^{k}(p)}).
	\end{align*}
	
	If $n(p)=0,$ then $p=v$ and for any $x\in \overline{V_{v}} -\{v\}$, $\left | \left ( f^{rm}  \right )^{'}\left ( x \right ) \right |_{\sigma } > C_{2}$ for some constant $C_{2}>1$ because $\left | (f^{r} )^{'}(x) \right |_{\sigma } > 1$ on $\overline{V_{v}} -\{v\}.$

	If $n(p)>0,$ that means $p$ is an orbital point of $v.$
	We take the diameter of $V_{p} $ to be sufficiently small so that the following inequality holds
	$$\operatorname{diam} V_{p} < A(V_{p}) A(V_{f(p)}) \dots A(V_{f^{n(p)-1}(p)})\, B(V_{f^{n(p)}(p)})$$
	and the quantity $\left | \left ( f^{rm+n(p)}  \right )^{'} (x)  \right |_{\sigma }$ is bigger than some constant $\eta>1.$ Hence, for any $x\in V=\bigcup_{p\in S\cup P} V_{p} ,$  we can define $g(x) = f^{t(x)}=f^{rm(x)+n(p)} (x).$

	In the following we will show that there is an integer $N_{1} $ independent of $x$ so that if $f^{k} (x)\in K$ is on the $f$-orbit between $x$ and $g\left ( x \right ), $ then $rm+n\left ( p \right )< k+N_{1}.$
	
	Observe that if the orbit of a point contains sufficiently many complete period blocks before leaving a neighborhood of the periodic orbit, then that point must be arbitrarily close to the periodic orbit.
	
	Suppose the periodic orbit of a period  point $v$ is $$\mathcal{O} \left ( v \right ) = \left \{ v,f\left ( v \right ),...,f^{r-1} \left ( v \right )   \right \}. $$
	If $x,F(x),\ldots,F^L(x)\in V_v,$ then by Lemma 3.7, for any $\varepsilon>0$ ,  there is  a sufficiently large positive number  $L$ such that $\left | x- v \right |_{\sigma }  < \varepsilon,$ where $F=f^r$.
	More precisely, let $\varepsilon>0$ be given. For $0\le s<r,$  since $f^s$ is continuous at $v,$ there exists $\delta_s>0$ such that if $|x-v|_{\sigma } <\delta_s
	,$ we have $$|f^s(x)-f^s(v)|_{\sigma } <\varepsilon.$$
	Because there are only finitely many phases $s=0,\ldots,r-1,$ we may take $$\delta=\min_{0\le s<r}\delta_s>0.$$
	Therefore, for every $s\in\{0,\ldots,r-1\},$ if $|x-v|_{\sigma } <\delta
	,$ we have $$|f^s(x)-f^s(v)|_{\sigma } <\varepsilon.$$
	By Lemma 3.7, there exists a positive integer  $L=L(\delta,v)$ such that, whenever $x,F(x),\ldots,F^L(x)\in\overline{V_v},$ we have
	$|x-v|_{\sigma } <\delta.$
	Consequently, $$|f^s(x)-f^s(v)|_{\sigma }<\varepsilon.$$
	Note that $f^s(v)\in P,$ it follows that $$dist(f^s(x),P)
	\le |f^s(x)-f^s(v)|_{\sigma }
	<\varepsilon.$$
	
	We now prove that $N_{1} $ must be finite. First, from the condition $f(V_p)\subset V_{f(p)},$ we know that the orbit of any preperiodic point in $p\in S$ cannot enter $K$ before reaching the periodic orbit.
	
	%
	For any $x\in V_{p},$ recall that $m=m_{V} (x)$ is the smallest positive integer such that $f^{rm}(f^{n(p)} (x)) \notin V_{v}.$ Let $z= f^{n\left ( p \right ) } \left ( x \right ) \in V_{v} $.
	We have $$F^{j} \left ( z \right )= f^{rj} \left ( z \right ) \in V_{v}  \:\:for \:\:all\:\:0\le j< m,$$ where $F= f^{r} .$ Let $k= n(p)+rl+  s,$ where $0\le s< r,$ and suppose that $$f^{k} \left ( x \right ) = f^{s}\left ( F^{l}\left ( z \right )   \right ) \in K .$$

	Since the orbit does not leave $V_{v} $ until the $m$-th periodic block, starting from $F^{l} \left ( z \right ) ,$ there are at least $m-l$ periodic blocks remaining in $V_{v} .$ That is $$F^{l} \left ( z \right ) ,F^{l+1} \left ( z \right ),...,F^{m-1} \left ( z \right )\in V_{v} .$$
	
	Assume that there is no finite number $N_1,$  Then,  we have $$f^{k} \left ( x \right ) = f^{s}\left ( F^{l}\left ( z \right )   \right ) \to f^{s}\left ( v \right )\in P   $$
	as $k$ tends to infinity.
	
	Hence $dist\left ( f^{k}\left ( x \right ),P   \right ) \to 0.$ Since $\bar{K} $ is compact and $\bar{K} \cap \left ( P\cup S \right ) = \emptyset ,$ it follows from the construction of $K$ that $d_{0}= dist\left ( \bar{K},S\cup P \right ) > 0 .$
	
	If there exists a sufficiently large $L= m-l$ such that some orbit point is still $L$ periods away from the periodic orbit. Take $\varepsilon = \frac{d_{0} }{2} ,$ then $$dist\left ( f^{k}\left ( x \right ),P   \right ) < \varepsilon = \frac{d_{0} }{2} .$$
	Supposing $f^{k}\left ( x \right )\in K,$ we obtain $$dist\left ( f^{k}\left ( x \right ),P \right ) \ge dist\left ( \overline{K},P  \right ) = d_{0} ,$$ which yields a contradiction.
	
	Therefore, if there exists $k> 0$ such that $f^{k}\left ( x \right )\in K  ,$ then it cannot be too far from eventually leaving the domain. Moreover, since the orbit of any preperiodic point in $S$ cannot enter $K$ before reaching the periodic orbit, and
	\begin{align}rm+n\left ( p \right ) -N_{1} < k< rm+n\left ( p \right ),\end{align}
	we conclude that the orbit of $x$ under $f$ can intersect $K$ only during its last $N_1$ iterates. Hence $rm+n\left ( p \right )< k+N_{1},$ which implies that $N_{1} $ is a uniform constant independent of $x$.
	
	This implies that the $f$-orbit between $x$ and $g\left ( x \right )$ hits $K$ at most $N_{1}  $ times. The orbit  intersecting with $K$ is only when it is outside of $U_{v} .$

	Now let $ K_{2}=\Lambda (G)\setminus  \bigcup_{p\in S\cup P}^{}V_{p}$ and $\widehat J$ denote the accesses space induced by the Markov partition corresponding to $G$. For each access $p\in S\cup P$, choose a one-sided cylinder neighborhood $V_{p} $ of $p$ and denote its closure in $\widehat J$ by $\overline{V}_p^{\,\widehat J}.$
	
	Set $V^{\,\widehat J}=\bigcup_{a\in S\cup P}\overline{V}_a^{\,\widehat J};
	$ $
	W^{\,\widehat J}=\widehat J\setminus V,$
	and  $\overline W^{\,\widehat J}
	=\widehat J\setminus \operatorname{Int}_{\widehat J}V .$
	Suppose that $p\in P$ is a parabolic periodic point and $p_{a} $ is one of its accesses. Since $V_{p_{a} } $ is a one-sided repelling petal based at $p_{a} ,$ its corresponding access endpoint is $p_{a}, $ hence $p_{a} \notin \overline W^{\,\widehat J}.$ All accesses of  rank-one parabolic periodic points (one point has two accesses) and  rank-two parabolic periodic points (one point has four accesses) have been removed. Therefore, no remaining access direction allows points in $W$ to approach $p.$ Equivalently, in the access topology, $\overline W^{\,\widehat J}\cap P=\varnothing .$
	
	The same argument applies to every one-sided access associated with a preperiodic point on the Markov boundary. Indeed, there is no other undeleted access adjacent to that point.
	
	Consequently, no point of $\overline W^{\,\widehat J}$ is a periodic point or a one-sided preperiodic boundary access. In particular, $\overline W^{\,\widehat J}\cap \left ( P\cup S \right ) = \emptyset .$
	
	Fix $x\in \overline W^{\,\widehat J}$  and define
	\begin{align}F_N (x):=\max_{1\le n\le N }\left | (f^n)'(x) \right | _{\sigma } .
	\end{align}
	
	Case 1: If the point $x$ is in $ J_{\mathrm{rad}},$ by the Koebe distortion theorem for inverse branches, $$\limsup_{n\to\infty}\left | (f^n)'(x) \right | _{\sigma } =\infty.$$ Hence there exists \(n(x)\ge 1\) such that $$\left | (f^n)'(x) \right | _{\sigma }>1.$$
	
	Case 2: If the point $x$ is is a nonperiodic preperiodic point.
	Let $n=n(p)\ge1$ be the minimal integer such that $f^n(x)=p$ for some $p\in P.$
	Since $x\notin\partial \mathfrak{R},$ it lies in the interior of an $n$-cylinder $C.$ Note that $$f^n|_C:C\longrightarrow f^n(C)$$
	is a diffeomorphism, let $\phi=(f^n|_C)^{-1}.$
	Then $\phi(p)=x.$
	By the strict contraction property of nontrivial inverse branches, $\left | \phi^{'} (p) \right |_{\sigma }  <1. $
	We obtain
	$$\left | (f^n)^{'} (x) \right |_{\sigma }
	=\frac1{\left | \phi^{'} (p) \right |_{\sigma }  }
	>1.$$
	
	In conclusion, for every $x\in \overline W^{\,\widehat J},$ there exists $n(x)\ge 1$ such that $$\left | (f^{n(x)})'(x) \right |_{\sigma }  >1.$$ If $\overline W^{\,\widehat J}$ also contains the outer endpoints of $V^{\,\widehat J},$ then either they belong to $ J_{rad }$ or they are among the  pre-parabolic points described above; hence they are  included in these two classifications.
	
	Fix $x\in \overline W^{\,\widehat J},$ and choose $n\left ( x \right ) $ as above. Let
	$\varepsilon_x
	:=
	\frac{\left | (f^{n(x)})'(x) \right |_{\sigma }  -1}{2}>0.$
	The map defined by $y\longmapsto \left| (f^{n(x)})'(y) \right |_{\sigma }$
	is continuous near $x$ in $\widehat J, $ since $\overline W^{\,\widehat J}$ contains no Markov-boundary points.  Hence there exists a relatively open neighborhood $V_x\subset\widehat J$ such that
	$$\left | (f^{n(x)})'(x) \right |_{\sigma } >1+\varepsilon_x,
	\qquad y\in V_x\cap \overline W^{\,\widehat J}.$$
	The family $\{V_x:x\in \overline W^{\,\widehat J}\}$ covers the compact set $\overline W^{\,\widehat J}.$ Choose a finite subcover $ V_{x_1}, V_{x_1}, \cdots,V_{x_s}$ of $\overline W^{\,\widehat J}.$
	
	Set $N_{2} =\max_{1\le i\le s}n_{x_i},
	$ $
	\delta=\min_{1\le i\le s}\varepsilon_{x_i}>0.$
	For any $y\in \overline W^{\,\widehat J},$ choose $i$ such that $y\in V_{x_i}.$ Then, $$\max_{1\le n\le N_{2}}\left | (f^n)'(y) \right |_{ \sigma }
	\ge
	\left | (f^{n\left ( x_i \right ) })'(y) \right | _{\sigma }
	>
	1+\varepsilon_{x_i}
	\ge 1+\delta.$$
	Thus $$\inf_{y\in \overline W^{\,\widehat J}}\max_{1\le n\le N_{2} }
	\left | (f^n)'(y) \right |_{\sigma }
	\ge 1+\delta>1.$$
	This implies that the $f$-orbit between $x$ and $g\left ( x \right )$ hits $K$ at most $N_{2}  $ times.

	In conclusion, for any $x\notin P\cup S ,$  let $N=\max \{N_{1}, N_{2}\}$.
	Then $\left | g^{'} \left ( x \right )  \right |_{\sigma }\ge \eta = \min \left \{ C_{1},C_{2},\lambda    \right \} > 1  $ and the $f$-orbit between $x$ and $g\left ( x \right ) $ hits $K$ at most $N$ times.
\end{proof}

%
%
%
%
%

%

\noindent{\bf Remark:}	Let $\mathfrak{R}= \left \{ R_{i}  \right \} _{i=0}^{\infty }  $ be a Markov partition of $\widehat{\mathbb{C}}$ induced by a geometrically finite Kleinian group.  Strictly speaking, the set on which the first return map is defined should initially be constructed as $$K
=
\Lambda(G)\setminus
\left(
\mathcal{P}\cup\bigcup_{s\in S\cup P}U_{s}
\right),$$
where $S\cup P$ is the countable set of parabolic points lying on the boundaries of the Markov partition in $\hat{ \mathbb{C}} $ associated with the Markov map $f,$ and $P\subset S\cup P$ is the finite set of parabolic vertices of the
chosen even-cornered fundamental domain.

The local structure of the Markov partition $\mathfrak{R}$ near a parabolic vertex implies that every neighborhood of each $p\in P$ contains countably many points of $S.$ Moreover, for each $p\in P,$   choose  a neighborhood $U_p$ of
the points $p\in P$  such that only finitely many points of $S$ are on the outside of $\bigcup_{p\in P}U_p.$ Hence, we can  make the neighborhoods of these finitely many exceptional points sufficiently small, after absorbing these finitely many exceptional neighborhoods into the notation for the sets
$U_p.$  Without loss of generality, we assume that $S\subset \bigcup_{p\in P}U_p.$
Accordingly, throughout the sequel we use the simplified notation
$$K
=
\Lambda(G)\setminus
\left(
\mathcal{P}\cup\bigcup_{p\in P}U_p
\right).$$

\section{Quasiconformal deformations of Kleinian groups}
Let $G$ be a geometrically finite Kleinian groups with parabolic elements.
For each $\mu\in \mathrm {M} \left ( G \right )$, note that when $g^{\mu}$ is  compatible with the group $G,$ the deformation group $G_{\mu}=g^{\mu}\circ G \circ (g^{\mu})^{-1}$ is also a subgroup of PSL$(2, \mathbb{C})$.
Fix $\mu\in \mathrm {M} \left ( G \right )$. By the self-similarity of the limit set of the group $G_{\mu},$ in order to study the dimension of the limit set of the group $G_{\mu},$ it is enough to study the dimensions of the image of $K$ under the quasiconformal map $g^{\mu }.$

Let  $\mathfrak{R}=\{R_{i}\}_{i\in I\cup \{0\}} $ be a Markov partition of $\hat{\mathbb{C}}$ for $G.$
Let
$$\bigtriangleup = \Lambda \left ( G \right )\bigcap \left(\bigcup_{i\in I\cup \left \{ 0 \right \} } \partial _{\hat{\mathbb{C} } }R_{i} \right ) .$$ From the assumption, it is known that $\bigtriangleup$ contains at most countably many elements. Moreover, since $R_{i}$ are pairwise mutually disjoint and $Int_{\hat{\mathbb{C} } } R_{0} \cap \Lambda \left ( G \right )= \emptyset.$
It is easy to see that $\Sigma = \bigcup_{g\in G}^{} g\left ( \bigtriangleup  \right ) $ contains at most countably many elements and is invariant under the action of $G.$ Note that every $p\in P $ lies on the boundary of some $R_{j}\in \mathfrak{R} .$ Therefore $\mathcal{P} \subset \Sigma .$ Let $\Lambda _{reg}=  \Lambda \left ( G \right ) \setminus \Sigma .$ Since both $\Lambda \left ( G \right )$ and $\Sigma$ are $G$-invariant, $\Lambda _{reg}$ is also $G$-invariant even though $\Lambda _{reg}$ is not closed.

A finite sequence $\omega = \left ( i_{0},...,i_{N}   \right ) $ is called  admissible if $f\left( R_{i_{k}} \right) \supseteq R_{i_{k+1}}$ for \\ $0\le k\le N-1,$ so that $$f^{N}\left( R_{i_{0}} \right) \supseteq f^{N-1}\left( R_{i_{1}} \right) \supseteq \dots \supseteq R_{i_{N}}.$$ An admissible word $\omega$ induces a truncated cylinder set $$R_{\omega } = \bigcap_{k= 0}^{N}f^{-k}\left ( Int_{\Lambda } R_{i_{k} }  \right )= \left \{ x\in \Lambda _{reg}:f^{k} \left ( x \right ) \in Int_{\Lambda } R_{i_{k} } ,0\le k\le N  \right \}    $$ of length $N+1,$ where $ Int_{\Lambda } R_{i_{k} } $
denotes $\Lambda(G)\bigcap R_{i_{k}}.$
All admissible cylinders of length $N+1$ form the $N$-th refinement of the original Markov partition. For each $N\ge 0,$ denote the resulting $N$-th refinement by $\mathfrak{R} ^{\left ( N \right ) } ,$ with $\mathfrak{R} ^{\left ( 0 \right ) }=  \mathfrak{R}.$ Since $\mathfrak{R}$ is at most countable, each $\mathfrak{R} ^{\left ( N \right ) } $ is also at most countable. By \cite{Mc2} we can see that, for every $N\ge 0,$ the higher-order refinement $\mathfrak{R} ^{\left ( N \right ) } $ remains a  Markov partition (at most countable) for $f.$
It should be noted that in order to ensure that the boundary of the set $K$ is compatible with the original Markov partition $\mathfrak{R}, $ we need to impose the additional requirement: there must exists a finite constant $N> 0$ such that $\partial K\subset \partial \mathfrak{R}^{(N)}.$ For every parabolic fixed point $p\in P$ and every integer $N\ge 0,$  define the star-shaped neighborhood of $p$  as $$U_{\mathrm{star}}^{N}(p)=\bigcup_{C\in \mathfrak{R}^{(N)},\,p\in C} C,$$
so that the neighborhood satisfies $p\in \operatorname{Int}_{\Lambda}U_{\mathrm{star}}^{N}(p)$ and
$$\operatorname{diam} U_{\mathrm{star}}^{N}(p)\to 0\qquad (N\to\infty).$$
Note that the set $ P$ of inequivalent parabolic fixed points of geometrically finite Kleinian groups is finite, the following choice of $N$ be made uniformly.
 Let
$$N\ge \max_{p\in P}N(p).$$
Define
$$U_{p}=\operatorname{Int}_{\Lambda}U_{\mathrm{star}}^{N}(p).$$
Then we have $p\in U_{p},$ $\operatorname{diam}U_{p}<\varepsilon_p$ and the boundary of this star neighborhood is composed of boundaries of $N$-th order Markov cylinders, so
$\partial_{\Lambda}U_{p}\subset \Sigma.$
Set
$$U_{\mathrm{par}}=\bigcup_{j=1}^{m}U_{p_j},\qquad
K=\Lambda(G)\setminus\left(U_{\mathrm{par}}\cup \mathcal{P}^{\ast}\right).$$
By a simple discussion, we can also choose  \(N\)  to satisfy the conditions of Theorem 3.1 and Theorem 3.8 before.

Let $K_{reg} = K\setminus \Sigma = K\cap \Lambda _{reg} .$ We know that, for points in $K_{reg},$ the orbit never lands on the boundaries of the selected cylinder star domain. Hence, the branch itinerary is well defined and unambiguous.

Let $\Omega_{N} $ be the set of all admissible words of length $N+1$ and let $\mathfrak{R} ^{\left ( N \right ) }$ be the $N$-th order refinement induced by $\Omega $ with $$\mathfrak{R} ^{\left ( N \right ) }= \left \{ R_{\omega }:\omega \in \Omega_{N}   \right \} .$$

We define the index set of parabolic states by $$\Omega_{U} = \left \{ \omega \in \Omega_{N} :R_{\omega }\in U_{p} , \:\:for\:\:some\:\:p\in P  \right \} ,$$ and set $$\Omega_{K} = \Omega_{N}\setminus \Omega_{U} ,$$ where $U_{p}$ is a neighborhood of $p.$
Since the boundary of $U_{par} $ is compatible with the Markov partition, $$U_{par} \setminus \Sigma = \bigcup_{\omega \in \Omega _{U} }^{} \left ( Int_{\Lambda }R_{\omega } \setminus \Sigma   \right ) $$ and $$K_{reg}= \bigcup_{\omega \in \Omega _{K} }^{} \left ( Int_{\Lambda }R_{\omega } \setminus \Sigma   \right ) .$$

We known that almost all elements of the countable partitions are concentrated in neighborhoods of the parabolic periodic points $P.$ Therefore, the index set $\Omega _{K} $ should be finite.

Let $f $ be the Markov map for $G$ with respect to the partition $\mathfrak{R}.$
Then there exists a subset $K$ of the limit set $\Lambda(G)$ such that the first return map $f_{K} :K_{reg} \to K_{reg}  $ satisfies the Markov properties.
More precisely, there exists an  Markov partition $\mathcal{A} = \left \{ A_{\alpha }  \right \} _{\alpha \in \Pi } $ of $K_{reg} $ such that:

1. For all $\alpha ,\beta \in \Pi ,$ $A_{\alpha } \cap A_{\beta }= \emptyset  ,$ and $K_{reg} = \bigcup_{\alpha \in \Pi }^{} A_{\alpha } .$

2. For each $\alpha \in \Pi ,$ there exists $\gamma _{\alpha } \in G$ such that $f_{K}\mid _{A_{\alpha } } =  \gamma _{\alpha } \mid _{A_{\alpha } } .$

3. The map $f_{K}$ satisfies the following transitivity condition: for every $A_{\alpha }\in \mathcal{A}  ,$ the image $f_{K} \left ( A_{\alpha } \right ) $ is a union of elements of $\mathcal{A}.$

From the construction of the Markov partition $\mathcal{A} = \left \{ A_{\alpha }  \right \} _{\alpha \in \Pi },$ we know that, for every $\alpha \in \Pi,$ there exists a unique admissible first return word decomposition as $\alpha = \left ( \omega _{0} ,\omega _{1}  ,...,\omega _{m} \right ),$ where $\omega _{0} ,\omega _{m}\in \Omega _{K},$ and $\omega _{i}\in \Omega _{U}, 0<  i<  m  .$ We prove that $f_{K} \left ( A_{\alpha }  \right )= B_{\omega_{m} }  .$

Given $[\mu] \in \mathcal{T}(G),$ let $f^{\mu }= g^{\mu }\circ f\circ\left ( g^{\mu } \right ) ^{-1}    $ be the corresponding Markov map for the deformation group $G_{\mu}$ with respect to the partition $\mathfrak{R}^{\mu}= g^{\mu } \left ( \mathfrak{R} \right ) ,$ then $f_{K}^{\mu }= g^{\mu }\circ f_{K} \circ\left ( g^{\mu } \right ) ^{-1}  $ is a Markov map on $K^{\mu }= g^{\mu } \left ( K \right )  $ with respect to the partition $\mathcal{A}^{\mu } =g^{\mu }  \left ( \mathcal{A}  \right ) .$
By Theorem 3.8, we know than the map $f_{K}$ is (eventually) expanding. In the following, we will shown that, under quasiconformal conjugation-deformations, the map $f_{K}^{\mu }  $ is also (eventually) expanding.

\begin{theorem}The map $f_{K}^{\mu }  $ defined as above is (eventually) expanding.
	
\end{theorem}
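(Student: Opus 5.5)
The plan is to avoid working directly with derivatives of $g^{\mu}$, which is only quasiconformal. Instead I would use two facts. First, every branch of $f_K^{\mu}$ is a M\"obius transformation. Second, eventual expansion of a Markov map whose branches are M\"obius maps is equivalent to uniform geometric shrinking of its deep cylinders. On each $g^{\mu}(A_\alpha)$ we have $f_K^{\mu}=\gamma_\alpha^{\mu}:=g^{\mu}\circ\gamma_\alpha\circ(g^{\mu})^{-1}\in G_\mu\subset \mathrm{PSL}(2,\mathbb C)$. The same holds for the iterates: on the $M$-cylinder $A_{\alpha_1\cdots\alpha_M}=\bigcap_{k<M} f_K^{-k}(A_{\alpha_{k+1}})$, the map $(f_K^{\mu})^{M}$ is the M\"obius map $\gamma^{\mu}_{\alpha_M}\circ\cdots\circ\gamma^{\mu}_{\alpha_1}$.

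\textbf{Step 1: finitely many images, uniform Koebe space.} By the construction preceding the theorem, $f_K(A_\alpha)=B_{\omega_m}$ with $\omega_m\in\Omega_K$. Since $\Omega_K$ is finite, $f_K$ and all its iterates have only finitely many distinct images of cylinders. Consequently the deformed images $g^{\mu}(B_\omega)$, $\omega\in\Omega_K$, are finitely many sets whose diameters are bounded below by some $c_\mu>0$. Each such image lies at positive distance from $g^{\mu}(\mathcal P)$, because $\overline K\cap(P\cup S)=\emptyset$. I would then argue that each inverse branch $(\gamma^{\mu}_{\alpha_M}\circ\cdots\circ\gamma^{\mu}_{\alpha_1})^{-1}$ extends univalently to a fixed enlargement $\widehat B_\omega\supset g^{\mu}(B_\omega)$, chosen with modulus of $\widehat B_\omega\setminus g^{\mu}(B_\omega)$ bounded below independently of the word. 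The intended mechanism is that the pole of the inverse M\"obius map lies in the image of $R_0$, that is, in $\Omega(G_\mu)$ away from the limit set. Granting this, the Koebe distortion theorem gives
\[
\bigl|\bigl((f_K^{\mu})^{M}\bigr)'(y)\bigr|_{\sigma}\;\asymp\;\frac{\operatorname{diam} g^{\mu}(B_{\omega})}{\operatorname{diam} g^{\mu}(A_{\alpha_1\cdots\alpha_M})}\;\ge\;\frac{c_\mu}{\operatorname{diam} g^{\mu}(A_{\alpha_1\cdots\alpha_M})}
\]
for $y\in g^{\mu}(A_{\alpha_1\cdots\alpha_M})$, with constants uniform in $M$ and in the word.

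\textbf{Step 2: cylinders of $f_K$ shrink exponentially.} By Theorem 3.8, $f_K$ is eventually expanding. Combined with the same Koebe argument for the undeformed M\"obius branches, this gives $\operatorname{diam}A_{\alpha_1\cdots\alpha_{n+M}}\le C\theta^{M}\operatorname{diam}A_{\alpha_1\cdots\alpha_n}$ for some $\theta<1$, uniformly in the word.

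\textbf{Step 3: transport the shrinking through $g^{\mu}$.} Here I would use that $g^{\mu}$ is $\eta$-quasisymmetric on $\hat{\mathbb C}$. For nested sets $A\subset B$ the Tukia--V\"ais\"al\"a estimate gives $\operatorname{diam}g^{\mu}(A)/\operatorname{diam}g^{\mu}(B)\le 2\eta\bigl(\operatorname{diam}A/\operatorname{diam}B\bigr)$. Applying this to $A_{\alpha_1\cdots\alpha_M}\subset A_{\alpha_1}$ yields
\[
\operatorname{diam} g^{\mu}(A_{\alpha_1\cdots\alpha_M})\;\le\;2\eta(C'\theta^{M})\operatorname{diam}\hat{\mathbb C}\;\longrightarrow\;0\quad (M\to\infty),
\]
uniformly in the word, since $\eta(t)\to0$ as $t\to0$. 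Choosing $M_0$ with $2\eta(C'\theta^{M_0})\operatorname{diam}\hat{\mathbb C}<c_\mu/(2C_{\mathrm{Koebe}})$ then gives $|((f_K^{\mu})^{M_0})'|_{\sigma}\ge 2$ on $K^{\mu}_{\mathrm{reg}}$. By continuity of the M\"obius branches up to cylinder boundaries the bound passes to all of $K^{\mu}$, which is the required eventual expansion.

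\textbf{Main obstacle.} The delicate step is Step 1: obtaining the uniform Koebe space even though $\Pi$ is infinite. Branches that spend a long time in $U_{\mathrm{par}}$ are compositions of high powers of parabolic elements. What must be shown is that, whatever the word, the inverse branch stays univalent on a definite neighbourhood of the finitely many images $g^{\mu}(B_\omega)$, $\omega\in\Omega_K$. My first attempt would exploit two features of the setup. One is that the relevant poles all lie in $g^{\mu}(R_0)\subset\Omega(G_\mu)$. The other is that the $B_\omega$ with $\omega\in\Omega_K$ lie at definite distance from $g^{\mu}(\partial R_0)$, because $\overline K$ avoids $P\cup S$ and $\partial K\subset\partial\mathfrak R^{(N)}$. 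Together these should make the neighbourhood $\widehat B_\omega$ avoid every pole simultaneously.
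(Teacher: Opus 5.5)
Your architecture matches the paper's, but Step~1 is a genuine gap. You grant, rather than prove, that every inverse branch ending at $g^{\mu}(B_\omega)$ is pole-free on one fixed neighbourhood $\widehat B_\omega$, uniformly over infinitely many words. Step~2 then reuses the same Koebe space for the undeformed branches.

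The paper does not supply this step either. Its proof has the same skeleton:
it fixes $D_\omega\supset\overline{B_\omega}$ for the finitely many $\omega\in\Omega_K$;
it bounds $|(T^{\mu}_{j,n})'|\le L_\mu\operatorname{diam}T^{\mu}_{j,n}(D^{\mu}_{t(j)})$ by the Cauchy integral over a fixed contour $\ell^{\mu}_{t(j)}$;
and it lets the diameter tend to $0$ using Rocha's $\Delta_n\to0$ and continuity of $g^{\mu}$.
But it simply asserts that each $T_{j,n}$ extends univalently to $D_\omega$ and that $\sup_j\operatorname{diam}T_{j,n}(D_{t(j)})\to0$.

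Your Koebe formulation is the better tool here. Once pole-freeness on $D_\omega$ is known, Koebe converts the decay of the cylinders themselves into decay of $\operatorname{diam}T_{j,n}(D'_\omega)$ for a slightly smaller fixed $D'_\omega$. The Cauchy route needs exactly this, and Rocha's shrinking of cylinders does not by itself give it. Conversely, your Steps~2--3 are heavier than needed. The bound $\sup\operatorname{diam}A_{\alpha_1\cdots\alpha_M}\le\Delta_M\to0$, together with uniform continuity of $g^{\mu}$ on $\hat{\mathbb C}$, already gives $\sup\operatorname{diam}g^{\mu}(A_{\alpha_1\cdots\alpha_M})\to0$. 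That needs no quasisymmetry, no exponential rate, and no Koebe space for the undeformed branches.

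The mechanism you suggest for Step~1 does not work. With $\infty\in\Omega(G)$ and $g^{\mu}(\infty)=\infty$, the pole of $T^{\mu}_{j,n}$ is $g^{\mu}(T_{j,n}^{-1}(\infty))$, so the question concerns $G$ alone. Several of your ingredients fail:
\begin{itemize}
\item Knowing that the poles lie in $\Omega$ gives no separation. Images of a compact subset of $\Omega$ under distinct group elements shrink to limit points, so poles of long branches lie within $o(1)$ of $\Lambda(G)$.
\item The poles are not confined to $R_0$. For a reflection Schottky group, the pole of $\rho_{i_1}\circ\cdots\circ\rho_{i_n}$ lies in the disc $D(C_{i_n})$.
\item The claimed positive distance from $g^{\mu}(\mathcal P)$ is false, since $\mathcal P$ is dense in $\Lambda(G)$. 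Only $P$ and the relevant points of $S$ are removed.
\item Nothing in the construction keeps $\overline{B_\omega}$ off the non-parabolic points of $\partial R_i\cap\Lambda(G)$. The condition $\partial K\subset\partial\mathfrak R^{(N)}$ puts $\partial K$ \emph{on} partition boundaries, not away from them.
\end{itemize}

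The Schottky example shows the right mechanism. The pole lies in the face disc of the last letter, while admissibility places $B_\omega$ in a different disc. The two discs meet only at tangency points, which are parabolic points removed by the $U_p$.

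In general you need a Markov-compatible thickening, i.e.\ the conformal-extension axiom of Mauldin--Urba\'nski (which the IFS argument for Ruelle's property needs anyway). Concretely: open connected sets $V_\omega\supset\overline{B_\omega}$, $\omega\in\Omega_K$, such that for every $\alpha=(\omega_0,\dots,\omega_m)\in\Pi$ the branch $\phi_\alpha$ is pole-free on $V_{\omega_m}$ and $\phi_\alpha(V_{\omega_m})\subset V_{\omega_0}$. This must be derived from the half-space geometry of the even-cornered polyhedron, including how $\overline{B_\omega}$ meets the face circles at non-parabolic limit points. With it in hand, your Koebe argument (or the paper's Cauchy estimate) goes through.
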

The method we used here is an expanding mirrors of the proof of Lemma 3 in  \cite{Bo1} or Proposition 2.3.2 in \cite{R2}.

\begin{proof}
	We can assume, without loss of generality, that $\infty \in \Omega \left ( G \right ) .$
	
	Let  $\mathcal{A} = \left \{ A_{\alpha }  \right \} _{\alpha \in \Pi } $ be the Markov partition $\mathcal{A} = \left \{ A_{\alpha }  \right \} _{\alpha \in \Pi } $ of $K_{reg} $ defined before the theorem.
	
	Suppose the sequence $\alpha = \left ( \omega _{0} ,\omega _{1}  ,...,\omega _{m} \right )$ is admissible, where $\omega _{0} ,\omega _{m}\in \Omega _{K},$ and $\omega _{i}\in \Omega _{U}, 0<  i<  m  .$ By the construction of the Markov partition of $f_K$ on $K,$  we know that $$f_{K} \left ( A_{\alpha }  \right )= B_{\omega _{m} }  \subset K,$$
	where $B_{\omega }=  R_{\omega }\cap \Lambda _{reg}  \: \: \: for\: \: all\: \: \omega \in \Omega .$
	
	Consider an $n$-cylinder of the Markov partition $\left \{ A_{\alpha _{j_{i} } }  \right \}_{\alpha _{j_{i} } \in \Pi }  $ in $K,$ corresponding to an admissible word $j= \left ( \alpha _{j_{0} },\alpha _{j_{1} },...,\alpha _{j_{n-1} } \right ) $ with $$E_{j}= A_{\alpha _{j_{0} }  }  \cap f_{K}^{-1}\left ( A_{\alpha _{j_{1} }  }   \right )\cap ...\cap f_{K} ^{-\left ( n-1 \right ) } \left ( A_{\alpha _{j_{n-1} } }   \right )   .$$
	Since $j$ is admissible, we have $f_{K}^{n} \left ( E_{j}  \right )= f_{K} \left ( A_{\alpha _{j_{n-1} } }  \right ) ,$ and hence $f_{K}^{n} \left ( E_{j}  \right )= B_{t_{\left ( j \right ) } } .$ Here $B_{t_{\left ( j \right ) } }$ is the last symbol of an admissible first-return word to $A_{\alpha _{j_{n-1} } } .$
	
	Note that $f_{K}^
	{n}\mid _{E_{j} }  $ is univalent. Assume  $$T_{j,n}= \left ( f_{K}^{n}\mid _{E_{j} }   \right )^{-1}:B_{t_{\left ( j \right ) } } \to E_{j}    .$$ Since $T_{j,n}\in G,$ the map $T_{j,n}$ extends to a globally defined biholomorphic automorphism of the Riemann sphere. For any $\omega  \in \Omega _{K} ,$ fix a complex neighborhood $D_{\omega  } $ of $\omega $ satisfying $\overline{B_{\omega } }\subset \operatorname{Int}(D_{\omega  } ).$   We extend the inverse branch $T_{j,n}$ associated with every cylinder of order $n$ terminating at $B_{\omega  }$  univalent to the  domain $D_{\omega  } .$
	
	For partition $\mathcal{A},$ let $\mathcal{A} ^{\left ( n \right ) }$ be the $n$-th order refinement induced by $\Omega .$
	Then for any $E_{j} \in \mathcal{A} ^{\left ( n \right ) } ,$ denote $$g^{\mu}(E_{j})=E_{j}^{\mu},\:\:\text{and}\:\:g^{\mu } \left ( T_{j,n}\left ( B_{t_{\left ( j \right ) } }  \right ) \right )=T_{j,n}^{\mu }\left ( B_{t_{\left ( j \right ) } }  \right ).$$
	Since $\overline{B}_{t_{\left ( j \right ) } } ^{\mu } \subset \operatorname{Int}(D_{t_{\left ( j \right ) }  }^{\mu }  ),$ we may choose a piecewise smooth Jordan curve $\ell  _{t_{\left ( j \right ) } } ^{\mu  } $ such that $$B_{t_{\left ( j \right ) }  }^{\mu } \subset Int\left ( \ell _{t_{\left ( j \right ) } } ^{\mu  }  \right ) ~~\text{and} ~~\overline{Int\left ( \ell _{t_{\left ( j \right ) } } ^{\mu  }  \right ) } \subset Int\left ( D_{t_{\left ( j \right ) }  }^{\mu }  \right ) .$$
	
	Let $$d_{t_{(j)}  }^{\mu } = dist\left ( B_{t_{\left ( j \right ) }  }^{\mu } ,\ell _{t_{\left ( j \right ) } } ^{\mu  }  \right )> 0 \:\:\: and \:\:\: L_{t_{(j)}  }^{\mu } = length\left ( \ell _{t_{\left ( j \right ) } } ^{\mu  }  \right ) < \infty .$$ For $T_{j,n  }^{\mu }:B_{t_{(j)}  }^{\mu } \to E_{j }^{\mu },$ the curve $\ell _{t_{\left ( j \right ) } } ^{\mu  } $ depends only on the terminal symbol.
	If two distinct $n$-cylinders $E_{j},E_{k}  $ have the same terminal symbol, namely $t\left ( j \right ) = t\left ( k \right ) ,$ we use exactly the same curve $\ell _{t_{\left ( j \right ) } } ^{\mu  } = \ell _{t_{\left ( k \right ) } } ^{\mu  } .$ Let  $\Pi ^{\left ( n \right ) }$ be the index set of the $n$-th cylinders set $\mathcal{A}^{n}.$ Define $$L_{\mu }= \sup _{n\ge 1,j\in \Pi ^{\left ( n \right ) } } \frac{length\left ( \ell _{t_{\left ( j \right ) } } ^{\mu  }  \right )}{2\pi dist\left ( B_{t_{\left ( j \right ) }  }^{\mu } ,\ell _{t_{\left ( j \right ) } } ^{\mu  }  \right )^{2 }}.$$
	Since $t\left ( j \right ) \in \Omega _{K} $ and $\Omega _{K}$ is finite, it follows that $$L_{\mu }= \sup_{n\ge 1,j\in \Pi ^{\left ( n \right ) } } \frac{L_{t_{\left ( j \right ) }  }^{\mu }}{2\pi \left ( d_{t_{(j)}  }^{\mu } \right )^{2}  }\le \max_{\omega  \in \Omega _{K} }\frac{L_{\omega   }^{\mu }}{2\pi \left ( d_{\omega   }^{\mu } \right )^{2}  } < \infty   .$$
	
	Fix an arbitrary $n$-cylinder $E_j.$ We know that $T_{j,n}^{\mu } :D_{t_{\left ( j \right ) } } ^{\mu } \to \hat{\mathbb{C} } $ is a univalent holomorphic map. For any $y\in B_{t_{\left ( j \right ) } } ^{\mu },$ the Cauchy integral formula gives $$\left ( T_{j,n}^{\mu }  \right ) ^{'} \left ( y \right ) = \frac{1}{2\pi i}\int_{\ell _{t_{\left ( j \right ) } } ^{\mu }}^{}\frac{T_{j,n}^{\mu } \left ( \zeta  \right ) }{\left ( \zeta -y \right )^{2}  } \mathrm{d}\zeta .$$ The residue theorem implies $$ \int_{\ell _{t_{\left ( j \right ) } } ^{\mu }}^{}\frac{1}{\left ( \zeta - y \right )^{2}  } \mathrm{d}\zeta=0.$$
	Hence  for any $y\in B_{t_{\left ( j \right ) } } ^{\mu },$  $$\left ( T_{j,n}^{\mu }  \right ) ^{'} \left ( y \right ) = \frac{1}{2\pi i}\int_{\ell _{t_{\left ( j \right ) } } ^{\mu }}^{}\frac{T_{j,n}^{\mu } \left ( \zeta  \right )-T_{j,n}^{\mu } \left ( y \right ) }{\left ( \zeta -y \right )^{2}  } \mathrm{d}\zeta.$$
	Furthermore, for any $\zeta \in \ell _{t_{\left ( j \right ) } } ^{\mu }$ and $y\in B_{t_{\left ( j \right ) } } ^{\mu },$  we have $$\left | T_{j,n}^{\mu } \left ( \zeta  \right )-T_{j,n}^{\mu } \left ( y \right ) \right | \le \text{diam } \left(T_{j,n}^{\mu } \left ( D_{t_{\left ( j \right ) } } ^{\mu } \right )\right )\:\:\:\text {and}\:\:\:\left | \zeta -y \right |\ge d _{t_{\left ( j \right ) } } ^{\mu }.
	$$
	Thus
	\begin{align*}\left | \left ( T_{j,n}^{\mu }  \right ) ^{'} \left ( y \right ) \right |  &\le  \frac{1}{2\pi }\int_{\ell _{t_{\left ( j \right ) } } ^{\mu }}^{}\frac{\left | T_{j,n}^{\mu } \left ( \zeta  \right )-T_{j,n}^{\mu } \left ( y \right )  \right | }{\left |  \zeta -y  \right | ^{2}  } \left | \mathrm{d}\zeta  \right |\\
		&\le \frac{L_{t_{\left ( j \right ) } } ^{\mu }}{2\pi \left ( d_{t_{\left ( j \right ) } } ^{\mu } \right )^{2}  }\text{diam}\left(T_{j,n}^{\mu } \left ( D_{t_{\left ( j \right ) } } ^{\mu } \right )\right).
		\\
		&\le L_{\mu }\text{ diam}\left(T_{j,n}^{\mu } \left ( D_{t_{\left ( j \right ) } } ^{\mu } \right )\right).
	\end{align*}
	For the map $T_{j,n}^{\mu }$ associated with any $n$-cylinder, we have $$\sup _{y\in B_{t_{\left ( j \right ) } }^{\mu } } \left | \left ( T_{j,n}^{\mu }  \right ) ^{'} \left ( y \right ) \right |  \le   L_{\mu } diamT_{j,n}^{\mu } \left ( D_{t_{\left ( j \right ) } } ^{\mu } \right )
	.$$
	Since $g^{\mu } $ is a quasiconformal homeomorphism and $D_{t_{\left ( j \right ) } }^{\mu } = g^{\mu } \left ( D_{t_{\left ( j \right ) } } \right ).$ Denote $$\Delta_n:=
	\sup_{\substack{(\gamma _0,\ldots,\gamma _{m-1})\ \mathrm{admissible}\\ m\ge n}}
	\operatorname{diam} I(\gamma _0,\ldots,\gamma _{m-1}),$$ by Rocha's Theorem 1.2.3(d) in \cite{R}, we know $\Delta_n\to0$ as $n\to\infty$. Since $\mathcal R$ refines the
	finite partition $\{I(\gamma )\}$, for every admissible word
	$\mathbf v=(v_0,\ldots,v_{m-1}),$ there are labels
	$\gamma (v_0),\ldots,\gamma (v_{m-1})$ such that $$C_{\mathbf v}\subset
	I(\gamma (v_0),\ldots,\gamma (v_{m-1})).$$
	Consequently, $$\sup_{\substack{\mathbf v\ \mathrm{admissible}\\|\mathbf v|\ge n}}
	\operatorname{diam}C_{\mathbf v}
	\le \Delta_n\longrightarrow 0\:\: \text{as}\:\: n\rightarrow \infty.$$

	Furthermore, suppose that the partition of $K\subset\Lambda$ is
	obtained by refining $\mathcal R$ according to the return times and
	the original Markov itineraries.  Each $n$-cylinder $C^K_{j}$ is then contained in an $\mathcal R$-cylinder whose
	length (at least $n$) is the sum of the first $n$ return times.
	Therefore
	$$\sup_{|j|=n}\operatorname{diam}C^K_{j}
	\le \Delta_n\to 0
	\qquad\text{as }n\to\infty.$$
	Rocha's result implies that $$\sup _{j\in \Pi ^{\left ( n \right ) } } \text{diam}\left(T_{j,n}\left ( D_{t_{\left ( j \right ) } }  \right )\right)\to 0 \:\:\text{as}\:\:n\to \infty. $$
	Consequently, $$\sup _{j\in \Pi ^{\left ( n \right ) } }\sup _{y\in B_{t_{\left ( j \right ) } }^{\mu } } \left | \left ( T_{j,n}^{\mu }  \right ) ^{'} \left ( y \right ) \right |  \le   A_{\mu }\sup _{j\in \Pi ^{\left ( n \right ) } } diamT_{j,n}^{\mu } \left ( D_{t_{\left ( j \right ) } } ^{\mu } \right )\to 0 $$ as $ n\to \infty  .$
	
	We can choose sufficiently large $n$  and  choose $0< q< 1$ so that, for every $n$-cylinder $E_j,$ $$\sup _{y\in B_{t_{\left ( j \right ) } }^{\mu } } \left | \left ( T_{j,n} ^{\mu }  \right )^{'} \left ( y \right )   \right |\le q< 1 ,$$ and for any $E_{j} \in \mathcal{A} ^{\left ( n \right ) } ,$ we define $T_{j,n}^{\mu }\left ( B_{t_{\left ( j \right ) } }  \right )= g^{\mu }  \left ( T_{j,n}\left ( B_{t_{\left ( j \right ) } }  \right ) \right ) .$ Since all $n$-cylinders cover $K^{\mu } ,$ it follows that $$\inf _{x\in K^{\mu } } \left | \left ( \left ( f_{K}^{\mu }  \right )^{n}   \right ) ^{'}\left ( x \right )   \right |\ge \frac{1}{q}> 1  .$$ Therefore $f^{\mu }_{K}$ is  an expanding map.

\end{proof}

It is well known that whenever $\{\mu_{t}\}$ is an analytic family of Beltrami coefficients in $\mathrm {M} (G)$ with respect to the parameter $t$, the family of quasiconformal maps $\{g^{\mu_{t}}\}$ is also an analytic family with respect to $t.$ Hence, $G_{\mu_{t}}$ is an analytic family of quasiconformal deformations of $G.$ Recall that a discrete group $G$ is called satisfying Ruelle's property if whenever $G_{\mu_{t}}$ is an analytic family of quasiconformal deformation groups of $G,$ then dim$\Lambda (G_{\mu_{t}})$ is an analytic function of $t,$ where dim$(\Lambda)$ denotes the Hausdorff dimension its limit set. By Theorem 3.8 and Theorem 4.1, we can show the main theorem of the paper.

\begin{theorem}\label{main}
	Let $G$ be a geometrically finite Kleinian group which has an even-cornered fundamental polyhedron.  Then $G$ has Ruelle's property.
\end{theorem}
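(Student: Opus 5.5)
The plan is to follow the Anderson--Rocha and Ruelle scheme. The dimension of the deformed limit set will be identified with the zero of a pressure function for the induced (first return) system on $K^{\mu_t}=g^{\mu_t}(K)$. We then show that this pressure depends real-analytically on $(t,s)$ and apply the implicit function theorem.

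Fix an analytic family $\mu_t$ in $\mathrm M(G)$, $t$ in a disc $D$. First, by Theorem 3.1 applied with $g=g^{\mu_t}$, we have $\dim_H\Lambda(G_{\mu_t})=\dim_H g^{\mu_t}(K)$. So it suffices to compute $\dim_H K^{\mu_t}$. Using the neighborhoods $U_p$ chosen in Section 4 uniformly from $N$-th refinements, $K$ does not depend on $t$. Second, the first return map $f_K^{\mu_t}$ is Markov with respect to $\mathcal A^{\mu_t}=g^{\mu_t}(\mathcal A)$. Its inverse branches $T^{\mu_t}_{\alpha}$ are M\"obius maps $g^{\mu_t}\gamma_\alpha^{-1}(g^{\mu_t})^{-1}$ restricted to the finitely many $B^{\mu_t}_\omega$, $\omega\in\Omega_K$. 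By Theorem 4.1 they are eventually uniformly contracting. Hence $K^{\mu_t}$ is, up to a countable set, the limit set of a conformal graph-directed IFS with countable alphabet $\Pi$ and finitely many vertices, in the sense of \cite{MU}. Bounded distortion comes from Koebe, since each branch extends univalently to the fixed neighborhoods $D^{\mu_t}_\omega$. By Mauldin--Urba\'nski, $\dim_H K^{\mu_t}$ equals the unique $s_t$ with $P_t(s_t)=0$, provided the system is regular or at least $P_t$ crosses zero strictly inside $(\theta_t,\infty)$.

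Third, I would establish analyticity. For each $\alpha$, the generator $\rho^{\mu_t}(\gamma_\alpha)$ depends holomorphically on $t$. So $\log|(T^{\mu_t}_\alpha)'(y)|$ is the real part of a holomorphic function of $(t,y)$, with $y$ ranging over complexified neighborhoods. Form the transfer operator
$$\mathcal L_{t,s}\phi(y)=\sum_{\alpha}\bigl((T^{\mu_t}_\alpha)'(y)\bigr)^{s}\phi(T^{\mu_t}_\alpha y)$$
on a Banach space of bounded holomorphic functions on $\bigsqcup_\omega D_\omega$. This is done as Ruelle and Anderson--Rocha do, using a holomorphic branch of the power and the complexification $\bar t\mapsto$ an independent variable, so that $(t,\bar t,s)\mapsto\mathcal L$ is holomorphic. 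For $s$ in a range where the series converges, $\mathcal L_{t,s}$ is compact and has a simple isolated leading eigenvalue $e^{P_t(s)}$. Analytic perturbation theory (Kato) then gives $P_t(s)$ real-analytic in $(t,s)$. Moreover $\partial_sP_t(s)<0$, by strict decrease (Lemma 3.2 of \cite{MU}, or via $\partial_sP=\int\log|T'|\,dm<0$ by uniform contraction). The implicit function theorem gives $t\mapsto s_t$ real-analytic.

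The main obstacle is the countable alphabet coming from cusp excursions. We must show that the operator series converges uniformly in $t$ near $t_0$ for $s$ near $s_{t_0}=\delta(G_{\mu_{t_0}})$, and that $s_{t_0}>\theta_{t_0}$. For this I would use the explicit parabolic structure from Lemma 3.6. A return word through a rank-$k$ cusp with $m$ parabolic steps has derivative comparable to $m^{-2}$, and there are about $m^{k-1}$ such words. Hence $\theta=k_{\max}/2$, and the sum over $\alpha$ is dominated by $\sum_m m^{k-1}m^{-2s}$, which is uniform in $t$ because the conjugated parabolic subgroup stays parabolic with holomorphically varying translation lengths. Then $\psi(\theta)=\infty$, so the system is regular and $s_t>\theta$. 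This is consistent with $\delta>k(p)/2$ quoted before Lemma 3.4, which I would invoke for every $t$, since $G_{\mu_t}$ is again geometrically finite. A last check is that $K$ and the coding are the same for all $t$, so that a single symbolic space carries the whole family.
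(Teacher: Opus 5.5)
Your proposal is correct and follows essentially the same route as the paper. You fix $K$ via Theorem 3.1 and pass to the countable conformal IFS given by the inverse branches of $f_K^{\mu}$. You show $\theta_\mu=k_{\max}/2$ with divergence at $\theta_\mu$, hence regularity; your count of about $m^{k-1}$ cusp words with derivative $\asymp m^{-2}$ is exactly the lattice sum of Lemma 4.3. You then identify the dimension with the zero of the pressure by Mauldin--Urba\'nski, and conclude by analytic perturbation of the leading eigenvalue of the transfer operator and the implicit function theorem.

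You are more careful than the paper about keeping $K$ and the coding independent of $t$ and about uniform-in-$t$ convergence of the parabolic sums. One thing to fix: for a Kleinian (not Fuchsian) group the weight must be $|T'|^{s}=(T')^{s/2}(\overline{T'})^{s/2}$, with $\bar y$ also complexified as an independent variable, not $(T')^{s}$ as written.
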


\begin{proof}
	Suppose $\mu$ is a $G$-compatible Beltrami coefficient. Let $K$ be a subset of the limit set  $\Lambda(G)$ constructed in Theorem 3.1.
	Then the map $f_{K} $ satisfies the Markov properties on $K.$ Hence we can get an iterated system(IFS) $S$ by $f_{K}$.  Conjugating  $S$ by $g^{\mu } ,$ we get an IFS $S_{\mu}$ as
	$$S_{\mu}= \{ \phi_\alpha  ^{\mu} : \phi_\alpha  ^{\mu}= g^{\mu }  \circ \phi_\alpha  \circ (g^{\mu } ) ^{-1},\; \alpha  \in \Pi \}.$$
	Here the countable index set $\Pi$  is induced by the Markov partition $\mathcal{A} =\left \{ A_{\alpha } \right \} $ on $K$ such that for any $A_{j } \in \mathcal{A},$  $f_{K} ^{-1} \mid _{A_{\alpha  } } = \phi _{\alpha  } \mid _{A_{\alpha  } }.$   This establishes a one-to-one correspondence between the Markov partition $\mathcal{A}$ and the countable index set $\Pi.$
	
	Let $\Pi_{n}^{\mu}$  be the  index set of the set of all quasiconformal deformations of the $n$-th cylinder partitions $\mathcal{A}^{(n)}. $ We have
	$$\psi_n^{\mu}(\iota ) = \sum_{j\in \Pi_n^{\mu}} \| (\phi_{j}^{\mu})' \|^{\iota },\qquad n\ge 1$$
	and let $P$ be the topological pressure function
	$$P(\mu,\iota ) = \lim_{n\to\infty} \frac{1}{n} \log \psi_n^{\mu}(\iota ),\qquad \iota   \in (\theta_{\mu},+\infty),$$
	where $\theta_{\mu} = \inf_{\iota } \{ \iota  : \psi_1^{\mu}(\iota ) < \infty \}.$
	
	If the set of generators of $G_{\mu}$ contains no parabolic elements, then the index set $\Pi$ is finite. Thus $\theta_{\mu}= -\infty$ and the system is regular. Anderson \cite{AR} proved this result for convex co-compact Kleinian groups with an even-cornered fundamental polyhedron. Their method utilizes the existence of a unique zero of the Ruelle zeta function in the complex plane $\mathbb{C},$ which can be represented by the pressure function $P(\mu,\iota ).$ When the set of generators of $G_{\mu}$ contains some parabolic elements, we need the following lemma, for the case of Fuchsian groups see \cite{HZ}.
	
	\begin{lemma} For any $[\mu]\in \mathcal{T}(G),$ we have $\theta_{\mu} = \frac{k_{max}}{2},$ where $k_{max}$ denotes the largest cusp rank. Furthermore the IFS $S_{\mu}$ is regular.
	\end{lemma}
	
	\begin{proof}
		
		Fix $[\mu]\in\mathcal T(G)$. By definition,
		$$\theta_\mu=
		\inf\left\{
		\iota :
		\psi_1^\mu(\iota )<\infty
		\right\},$$
		where
		$$\psi_1^\mu(\iota )
		=
		\sum_{\alpha\in \Pi}
		\left\|(\phi_\alpha^\mu)'\right\|^\iota .$$
		It suffices to prove that
		$$\psi_1^\mu(\iota)<\infty
		\:\:\text{for}\:\:
		\iota>\frac{k_{\max}}{2},$$
		and
		$$\psi_1^\mu\left(\frac{k_{\max}}{2}\right)=\infty.$$
		
		Let $ P_\mu$ be a finite set of representatives of the $G_\mu$-orbits of bounded parabolic fixed points of $G_\mu$. For each $p\in P_\mu$, let $k(p)\in\{1,2\}$ denote the rank of the corresponding cusp, and choose a finite-index subgroup
		$L_p<  \operatorname{Stab}_{G_\mu}(p)$ such that $L_p\cong\mathbb Z^{k(p)}.$
		Set
		$k_{\max}
		=
		\max_{p\in P_\mu}k(p).$
		We decompose the index set $\Pi$ as
		$$\Pi
		=
		\Pi_h
		\cup
		\bigcup_{p\in P_\mu}\Pi_p,$$
		where $\ \Pi_h$ corresponds to the hyperbolic branches and $ \Pi_p$ corresponds to the parabolic branches associated with the cusp $p$. Consequently,
		$$\psi_1^\mu(\iota)
		=
		\sum_{\alpha\in \Pi_h}
		\left\|(\phi_\alpha^\mu)'\right\|^\iota
		+
		\sum_{p\in P_\mu}
		\sum_{\alpha\in \Pi_p}
		\left\|(\phi_\alpha^\mu)'\right\|^\iota.$$
		By the construction of the Markov map $f$ and the definition of the induced map $f_K$, the set $\Pi_h$ is finite \cite{BS}. Hence, for every $\iota\geq0$,
		$$\sum_{\alpha\in\Pi_h}
		\left\|(\phi_\alpha^\mu)'\right\|^\iota
		<\infty.$$

		For the parabolic case, fix  $p\in\mathcal P_\mu$ and let
		$r=k(p).$
		After  changing  coordinates by Mobius conjugation-transforation, we may assume that $p=\infty$.  Choose $L_p$  to act as a lattice of translations on
		$$\mathbb R^r\subset\mathbb R^2.$$
		The orbit map
		$$\gamma \longmapsto \gamma (0),
		\qquad \gamma \in L_p$$
		identifies $L_p$ with a discrete lattice in $\mathbb R^r$. 	More generally, the cusp stabilizer may also have a finite rotational part acting in the directions orthogonal to $\mathbb R^r$, but this does not affect the convergence of the series considered below.
		By the bounded distortion property of the induced Markov system and the construction of the parabolic branches, with comparison constants independent of $\gamma \in L_p$, we have
		$$\sum_{\alpha\in\Pi_p}
		\left\|(\phi_\alpha^\mu)'\right\|^\iota
		\asymp
		\sum_{\gamma \in L_p}
		|\gamma '(0)|_\sigma^\iota,$$
		where $|\gamma '(0)|_\sigma$ denotes the derivative norm with respect to the spherical metric
		$$ds_\sigma=\frac{2|dz|}{1+|z|^2}.$$
		The spherical derivative of $\gamma $ is given by
		$$|\gamma '(z)|_\sigma
		=
		|\gamma '(z)|
		\frac{1+|z|^2}{1+|\gamma (z)|^2}.$$
		We have $|\gamma '(0)|=1,$ since the elements of $L_p$ act as Euclidean isometry in the above coordinates. Therefore
		$$|\gamma '(0)|_\sigma
		=
		\frac{1}{1+|\gamma (0)|^2}.$$
		Hence
		$$\sum_{\alpha\in\Pi_p}
		\left\|(\phi_\alpha^\mu)'\right\|^\iota
		\asymp
		\sum_{\gamma \in L_p}
		(1+|\gamma (0)|^2)^{-\iota}.$$
		
		Since $\{\gamma (0):\gamma \in L_p\}$ is a full-rank lattice in $\mathbb R^r$, the preceding lattice sum has the same convergence behavior as
		$$\int_{\mathbb R^r}
		(1+|x|^2)^{-\iota}\,dx.$$
		Using polar coordinates, we obtain
		$$\int_{\mathbb R^r}
		(1+|x|^2)^{-\iota}\,dx
		=
		\operatorname{area}(S^{r-1})
		\int_0^\infty
		\frac{t^{r-1}}{(1+t^2)^\iota}\,dt.$$
		As $t\to\infty$, the radial integrand satisfies
		$$\frac{t^{r-1}}{(1+t^2)^\iota}
		\asymp
		t^{r-1-2\iota}.$$
		Thus, the integral, and consequently the corresponding lattice sum, converges if and only if
		$$r-1-2\iota<-1$$
		or equivalently,
		$$\iota>\frac r2.$$
		
		When $\iota=r/2$, the integrand is asymptotic to $t^{-1}$, and hence both the integral and the corresponding lattice sum diverge. Therefore, for every $p\in P_\mu$,
		$$\sum_{\alpha\in\Pi_p}
		\left\|(\phi_\alpha^\mu)'\right\|^\iota
		<\infty
		\quad\Longleftrightarrow\quad
		\iota>\frac{k(p)}{2}$$
		and
		$$\sum_{\alpha\in\Pi_p}
		\left\|(\phi_\alpha^\mu)'\right\|^{k(p)/2}
		=
		\infty.$$
		Since $ P_\mu$ is finite and the contribution from the hyperbolic branches is always finite, it follows that
		$$\psi_1^\mu(\iota)<\infty
		\quad\Longleftrightarrow\quad
		\iota>
		\frac12\max_{p\in P_\mu}k(p)
		=
		\frac{k_{\max}}2.$$
		In particular, if $p$ is any cusp of rank $k_{\max}$, then the parabolic contribution diverges at $\iota=k_{\max}/2$. Hence
		$$\psi_1^\mu\left(\frac{k_{\max}}2\right)=\infty.$$
		It follows that
		$$\theta_\mu=\frac{k_{\max}}2.$$
		Finally, by the regularity criterion for countable conformal iterated function systems cited above,
		$$S_\mu\text{ is regular}
		\quad\Longleftrightarrow\quad
		\psi_1^\mu(\theta_\mu)=\infty.$$
		Thus the system $S_\mu$ is regular.

	\end{proof}

	Mauldin and Urbanski \cite{MU} showed that, for a regular system, the dimension of the limit set is the unique zero of the function $\iota \mapsto P(\mu,\iota).$ To finish the proof of the theorem, it remains only to prove that the zero varies real analytically with respect to $\left [ \mu  \right ] .$ This follows from the classical thermodynamic formalism (a generalization of the Perron–Frobenius theorem, see \cite{Bo} \cite{Ru}): $\exp P(\mu,\iota)$ is an isolated eigenvalue of a transfer operator. The theorem follows from the implicit function theorem applied to $(\mu,\iota) \mapsto \exp(P(\mu,\iota)).$
\end{proof}

\section{Hausdorff dimensions of limit sets of Hecke groups}

In \cite{Mc3}, McMullen  gave an eigenvalue algorithm for accurately computing the Hausdorff dimension of the limit sets of some classes of convex cocompact groups Kleinian groups. McMullen's algorithm can be applied to Schottky groups and Hecke groups. In this chapter, our objective is to extend McMullen's results regarding convex cocompact groups to the context of geometrically finite groups. We will also provide calculations for relevant examples.

The $\sigma$-algebra of $\mu$-measurable sets is the $\sigma$-algebra of Borel sets of $\mathbb{B} ^{3} .$ For $\delta \ge 0,$ a measure $\mu$ is called a $\delta$-conformal measure of $G$ if  $\mu$ is a non-trivial finite measure supported on $\Lambda(G)$  satisfying the transformation rule
$$\mu(\gamma A) = \int_A |\gamma '|^{\delta} \, d\mu$$
for every Borel set $A$ of $\mathbb{B} ^{3} $ and every $\gamma \in G,$ where $|\gamma '|$ is the operator norm of the derivative of $g.$ Here the derivative given by $\sigma = 2|dx|/(1+|x|^2)$ on $\mathbb{S} ^{2} $ is measured in the spherical metric.




\subsection{Schottky groups} Every circle $C\subset \hat{\mathbb{C}}$ bounds a plane $H(C)\subset \mathbb{H}^{3}$, which is a Euclidean hemisphere normal to $\hat{\mathbb{C}}.$  The space of all circles forms a manifold, naturally identified with the Grassmannian space $\mathcal{H}^{3}$ of hyperbolic planes in $\mathbb{H}^{3}.$

Consider a configuration $\mathcal{C}= \{ C_1, C_2, \dots, C_d \}$ of $d\ge 3$ circles in $\hat{\mathbb{C}}$ bounding disks $D(C_i)\subset S_{\infty}^{2}$ with disjoint interiors:
$$\operatorname{int}(D(C_i)) \cap \operatorname{int}(D(C_j)) = \emptyset \qquad (i\ne j).$$
These circles can be tangent but they cannot overlap or nest. The space of such configurations forms a closed set $\mathcal{S}_d\subset (\mathcal{H}^{3})^{d}.$
Let $\operatorname{int}\mathcal{S}_d$ be the set of configurations which contains all the pairwise disjoint circles. Let $\partial \mathcal{S}_d$ denote the set of configurations which contains at least one pair of  circles tangent with each other.

Any circle $C\subset \hat{\mathbb{C}}$ determines a conformal reflection $\rho : S_{\infty}^{2} \to S_{\infty}^{2}$ fixing $C.$  For any $\mathcal{C}\in \mathcal{S}^{d}$, let $\Gamma(\mathcal{C})$ be the Schottky group generated by the reflections $\rho_i$ through $C_i,$ $i=1,\dots,d.$

Each plane $H(C_i)$ divides the space $\mathbb{H}^3$ into two half spaces. The region which is outside all the solid hemispheres bounded by $\langle H(C_i) \rangle$ for $C_{i} \in\mathcal{C}= \{ C_1, C_2, \dots, C_d \}$ gives a finite sided fundamental domain for $\Gamma(\mathcal{C}).$ Consider the exterior intersection of these planes
$$\mathfrak{F} = \bigcap_{i=1}^{d} \bigl( \mathbb{H}^{3}\setminus \operatorname{int}(H_i^{+}) \bigr),$$
where $H_i^{+}$ is the solid hemisphere (i.e. the side containing the disk $D(C_i)$ bounded by $H(C_i)$). More concretely $\mathfrak{F}$ is the common region exterior to all these hemispheres.
Since the interiors of all these disks are pairwise disjoint, the projections of all these hemispheres onto the boundary $\hat{\mathbb{C}}$ are mutually non-overlapping. Hence $\mathfrak{F}$ is a polyhedron with finitely many faces (in fact a polyhedron whose faces are parts of hemispheres, together with possibly some ideal parts coming from the boundary at infinity). Thus the Schottky group $\Gamma(\mathcal{C})$ is geometrically finite.

The orientate double-cover of $M = \mathbb{H}^{3}/\Gamma(\mathcal{C})$ is a handlebody of genus $d-1.$ Any cusp of $M$ is rank one  corresponding bijectively to a pair of tangent circles in $\mathcal{C}.$ In this case, we have $\mathcal{C}\in \partial \mathcal{S}_d.$ Thus $\Gamma(\mathcal{C})$ is convex co-compact if and only if the circles in $\mathcal{C}$ are pairwise disjoint ($\mathcal{C}\in \operatorname{int}\mathcal{S}_d$).

In this paper,  by extending McMullen's Theorem 3.1 \cite{Mc3} to  the case of geometrically finite groups with parabolic elements,  we will  show that the analytic property of $\!\dim \Lambda(\Gamma(\mathcal{C}))$ not only holds inside $\mathcal{S}_d$ , but also persists on the boundary of $\mathcal{S}_d.$

\subsection{ Hecke groups}

For \(0<r\le 1\), let $C_r=\{z\in\mathbb H:|z|=r\}$ be the semicircle in the upper half-plane and $$\ell_{+}=\{z\in\mathbb H:\operatorname{Re}z=1\}\:\: \text{and}
\:\:
\ell_{-}=\{z\in\mathbb H:\operatorname{Re}z=-1\}$$ be  two vertical geodesics with  \(z=\infty\) as the common endpoint point . Let $\mathcal C_r=\langle C_r,\ell_{+},\ell_{-}\rangle$ denote the geometric configuration formed by the three geodesics and  $\Gamma_r=\Gamma(\mathcal C_r)$ be the Fuchsian reflection group generated by the reflections in these geodesics, which is also called Hecke group. Let $\Gamma _{n} \subset \mathrm {Isom} \left ( \mathbb{H}^{2}   \right )   $ be a sequence of Kleinian groups. There are several possible notions of convergence of the sequence $\Gamma _{n}.$ We say $\Gamma _{n} \to \Gamma _{A} $ $algebraically$ if there exist isomorphisms $\chi _{n} :\Gamma _{A} \to \Gamma _{n} $ such that $\chi _{n} \left ( g \right )\to g $ for each $g\in \Gamma _{A} .$ A parabolic element $g\in \Gamma $ is an $accidental$ $parabolic$ if $\chi _{n} \left ( g \right ) $ is hyperbolic for infinitely many $n.$ Then the complex length $L_{n} +i\theta _{n} $ of $\chi _{n} \left ( g \right ) $ tends to zero; it does so $radially$ if $\theta _{n}=O\left ( L_{n}  \right )  ,$ and $horocyclically$ if $\theta _{n}^{2} /L_{n}\to 0 .$ It is clear that the sequence of Hecke groups we construct for $\Gamma _{r} \longrightarrow \Gamma _{1} $ is algebraically convergent.  Denote the limit set of $\Gamma_r$ by $\Lambda_r.$ For the other type of limit, We say $\Gamma _{n} \to  \Gamma _{G} $ $geometrically$ if we have convergence in the Hausdorff topology on closed subsets of $ \mathrm {Isom} \left ( \mathbb{H}^{2}   \right ) .$ If $\Gamma _{n} $ converges to $\Gamma$ both geometrically and algebraically, then it converges strongly. By studying the moduli space of surfaces, we can show that, for the other type of limit, all rank-one accidental parabolic points correspond to radial convergence. Since a Fuchsian group contains only rank-one parabolic points, it follows that, in the sequence $\Gamma _{r}\to \Gamma _{1}  $ we construct, all accidental parabolic limit points of the geometrically finite group $\Gamma _{1}$ are radially. For a detailed analysis, see  \cite{Mc1}.

To obtain the deformation on the quotient surface, we must also prove that $\Gamma _{r}\to \Gamma _{1}  $ is geometrically convergent. Applying Theorem 7.3 of McMullen \cite{Mc1}, we find that the sequence $\Gamma _{r}\to \Gamma _{1}  $ is strongly convergent and hence also geometrically convergent.
Let $X_r$ denote the orientate double cover of the quotient surface $\mathbb H/\Gamma_r.$ It is easy to see that for $0<r<1,$ $X_r$ is a hyperbolic pair of pants with one rank-one cusp and two ends of infinite volume. The cusp corresponds to the point $z=\infty$ in the fundamental domain.

When \(r=1\), the circle $C_1=\{z:|z|=1\}$ is tangent to both $$\ell_{+}=\{\operatorname{Re}z=1\},
\:\: \text{and} \:\:
\ell_{-}=\{\operatorname{Re}z=-1\}.$$ Consequently, the limiting group $\Gamma_1$ of $\Gamma_r$ as $r\rightarrow1$ has two rank-one parabolic elements arising from the two tangent pairs. Its limit set degenerates to the extended real line $\Lambda_1=\widehat{\mathbb R}$ and hence $\!\dim_{H}(\Lambda_1)=1.$

Let $\rho_r(z)=\displaystyle\frac{r^2}{\overline z}$ be the reflection regarding to the circle $C_r$. It is easy to see that $\rho_r$ is an anti-conformal map. In what follows, let $|D\rho_r(z)|$ denote the norm of its differential. So that $$|D\rho_r(z)|
=
\frac{r^2}{|z|^2}.$$

Let us consider the family of translations $\{T_{2n}(z)=z+2n\}.$
This family is closely related to Hecke groups.
Let $H_R$ be the group generated by
$$\left\langle
z\mapsto-\frac1z,\ z\mapsto z+R
\right\rangle$$ and $G_R$ be the group generated by
$$\left\langle
z\mapsto\frac{z}{2Rz+1},\ z\mapsto z+1
\right\rangle,$$ where $R=\frac2r.$
According to McMullen's result \cite{Mc2}, the groups $\Gamma_r,$ $H_R,$ and $G_R$ have a common finite cover. More precisely, there exists a hyperbolic surface $Y_r$ such that all of the three maps $$Y_r\longrightarrow\mathbb H/\Gamma_r,
\qquad
Y_r\longrightarrow\mathbb H/H_R,
\qquad
Y_r\longrightarrow\mathbb H/G_R$$ are  finite covering maps. Hence, these three groups have the same critical exponent and their limit sets have the same Hausdorff dimensions.

In this section, we regard $\{\Gamma_r\}_{0<r<1}$ as a real-analytic family of groups respect to the parameter $r.$ Here “real-analytic” means that, with respect to fixed generators and fixed coordinates, the generating reflections and the representing matrices of every group element are real-analytic in $r.$

We must point out   that here $\Gamma_r$ does not be a quasiconformal conjugating deformation of a fixed group. Although one can construct a quasiconformal map sending the unit circle to the circle $|z|=r,$ this does not  guarantee that it conjugates every reflection in the original fundamental domain to the corresponding reflection in the new fundamental domain. Therefore, the real-analyticity in $r$ below will be obtained directly from the group generators, the countable Markov partition, and the Ruelle's transfer operator.
Now we give the main result of this section. Let $\delta(r)$ denote the \poin convergence exponent of the \poin series of the geometrically finite Kleinian group $\Gamma_{r}.$  It is well known that  $\delta(r)$ is equal to Hausdorff dimension of the limit set $\Lambda(\Gamma_r)$ of the group $\Gamma_{r}.$ About the function  $\delta(r)$, we have

\begin{theorem}
Let $\delta(r)$ denote the \poin convergence exponent of the Hecke group $\Gamma_{r}.$  The function $\delta(r)$ is real-analytic with respect to $r$ in the interval $(0,1).$
Moreover, $\delta(r)$ can be extended continuously to the closed interval $[0,1].$ Under the self-similar decomposition of the critical conformal measure, $\delta(r)$ satisfies the normalization equation
\begin{align}
1=
2\sum_{n=1}^{\infty}
\left(\frac r{2n}\right)^{2\delta(r)}
a_n(\delta(r)),
\end{align}
where $a_n(\delta(r))$ is the correction factor arising from the distortion of the branches, and $a_n(\delta(r))\longrightarrow 1
$ as $n\to\infty.$ Furthermore, $\delta(r)
=
\frac{1+r}{2}+O(r^2)$ as $r\to 0,$  and if $X_r$ denotes the orientable double cover of $\mathbb H/\Gamma_r,$ then $$1-\delta(r)
\sim
\lambda_0(X_r)
\asymp
\sqrt{1-r} \:\: \text{as}\:\: r\to1.$$

\end{theorem}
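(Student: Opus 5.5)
The plan is to encode $\Lambda_r$ by a countable, inducing-type iterated function system adapted to the cusp at $\infty$, in the spirit of Theorem 3.8 and Lemma 4.3. We then read off all four assertions from the pressure function of that system.

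\textbf{The induced system.} By the symmetry $z\mapsto-\bar z$ we may work on the fundamental arc $I_r=[-r,r]$ cut out by $C_r$. A point of $\Lambda_r$ inside $I_r$ leaves the disk bounded by $C_r$ after one reflection $\rho_r$. It is then brought back into $[-1,1]$ by the translation $T_{-2n}$, which is a word in the reflections in $\ell_\pm$, for a unique $n\in\mathbb Z\setminus\{0\}$. This gives branches
$$\phi_n(z)=\rho_r(z+2n)=\frac{r^2}{\bar z+2n},\qquad n\neq 0,$$
which map $I_r$ into $I_r$. Their derivatives satisfy
$$|\phi_n'(z)|=\frac{r^2}{|z+2n|^2}=\Bigl(\frac r{2|n|}\Bigr)^2\bigl(1+O(r/|n|)\bigr).$$
So the system is uniformly contracting and has bounded distortion. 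Its limit set is $\Lambda_r$ minus the countable parabolic orbit, so by Lemma 3.4 it has the same dimension $\delta(r)$.

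\textbf{The normalization equation.} Let $\mu_r$ be the $\delta(r)$-conformal Patterson--Sullivan measure, normalized by $\mu_r(I_r)=1$. Self-similarity gives
$$\mu_r(I_r)=\sum_{n\ne0}\mu_r(\phi_n I_r)=\sum_{n\neq0}\int_{I_r}|\phi_n'|^{\delta}\,d\mu_r .$$
Setting
$$a_n(\delta)=\Bigl(\tfrac{2n}{r}\Bigr)^{2\delta}\int_{I_r}|\phi_n'|^{\delta}\,d\mu_r$$
and pairing $n$ with $-n$ yields exactly the displayed identity (5.1). Moreover $a_n\to1$, because $|z|\le r$ is negligible against $2n$.

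\textbf{Analyticity.} We consider the transfer operator
$$\mathcal L_{r,s}h=\sum_{n\neq0}|\phi_n'|^{s}\,h\circ\phi_n$$
on bounded holomorphic functions on a fixed complex disk $D\supset[-1,1]$. The maps $\phi_n$ extend holomorphically and send $D$ compactly into itself, uniformly in $n$. Hence $\mathcal L_{r,s}$ is compact and depends analytically on $(r,s)$ for $0<r<1$ and $\operatorname{Re}s>1/2$; here $\theta=1/2$, exactly as in Lemma 4.3 with $k_{\max}=1$. The system is regular, since $\sum n^{-1}=\infty$ at $s=1/2$.

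By the Ruelle--Perron--Frobenius theorem, $e^{P(r,s)}$ is a simple isolated eigenvalue, so $P$ is real-analytic in $(r,s)$. By Mauldin--Urba\'nski, $\delta(r)$ is the unique zero of $s\mapsto P(r,s)$. Since $\partial_sP<0$ (it is minus a Lyapunov exponent), the implicit function theorem gives real-analyticity of $\delta$ on $(0,1)$.

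\textbf{Endpoint behaviour.} As $r\to0$ we have $a_n\to1$ uniformly, so (5.1) becomes
$$1\approx2\Bigl(\frac r2\Bigr)^{2\delta}\zeta(2\delta).$$
This forces $\delta\to1/2$. Using $\zeta(2\delta)=\frac1{2\delta-1}+O(1)$, we get $2\delta-1=r+O(r^2)$, that is, $\delta(r)=\frac{1+r}{2}+O(r^2)$; this also gives continuity at $0$.

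As $r\to1$, continuity follows from the strong convergence $\Gamma_r\to\Gamma_1$ (McMullen, Theorem 7.3 of \cite{Mc1}), together with continuity of the critical exponent under strong convergence with radial accidental parabolics; this gives $\delta(r)\to1=\dim_H\Lambda_1$. For the rate we use $\lambda_0(X_r)=\delta(1-\delta)$, valid since $\delta>1/2$, so that $1-\delta\sim\lambda_0(X_r)$. It remains to estimate $\lambda_0(X_r)$ through the short closed geodesic separating the funnels. The hyperbolic distance between $C_r$ and $\ell_+$ is of order $\sqrt{1-r}$. By the thick--thin and collar estimates used by McMullen, the bottom of the spectrum is comparable to the length of this geodesic, which gives $\lambda_0(X_r)\asymp\sqrt{1-r}$.

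\textbf{Main obstacle.} I expect the main obstacle to be this last step. The uniform eigenvalue estimates degenerate as $r\to1$, since a new cusp appears, and the comparison $\lambda_0\asymp\sqrt{1-r}$ has to be imported from McMullen's analysis rather than derived from the IFS. I would first try to cite \cite{Mc1,Mc2} directly for it.
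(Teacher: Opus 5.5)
Your architecture is the paper's: the induced branches $\phi_n=\rho_r\circ T_{2n}$, the conformal-measure normalization, a compact transfer operator on a disk with the implicit function theorem for analyticity, and, as $r\to1$, McMullen's strong convergence together with $\lambda_0=\delta(1-\delta)$ and collar estimates. The genuine gap is the small-$r$ expansion. With $\varepsilon=2\delta-1$, your approximate equation reads $1=r\,(r/2)^{\varepsilon}\bigl(\varepsilon^{-1}+O(1)\bigr)$, and you silently treat $(r/2)^{\varepsilon}$ as $1+O(r)$. In fact $(r/2)^{\varepsilon}=\exp\bigl(-\varepsilon\log\frac{2}{r}\bigr)=1-\varepsilon\log\frac{2}{r}+\cdots$, and $\varepsilon\log\frac{2}{r}\approx r\log\frac{2}{r}\gg r$. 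So your argument yields only $\delta(r)=\frac{1+r}{2}+O\bigl(r^2\log\frac{1}{r}\bigr)$. This is also all the paper extracts from its model equation before citing McMullen for the sharper error.

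The distortion factors cannot remove the logarithm. Pair $n$ with $-n$. You must do this anyway, because for odd $n$ the return element is the reflection $x\mapsto 2n-x$, not $T_{-2n}$: the translations generated by the reflections in $\ell_{\pm}$ are by multiples of $4$. Since $\mu_r$ is a probability measure on $[-r,r]$, the linear terms cancel, and $a_n=\frac12\int\bigl[(1+\frac{x}{2n})^{-2\delta}+(1-\frac{x}{2n})^{-2\delta}\bigr]\,d\mu_r(x)=1+O(r^2/n^2)$. The exact identity is therefore $\frac{1}{r}\bigl(\frac{2}{r}\bigr)^{\varepsilon}=\zeta(1+\varepsilon)+O(r^2)=\varepsilon^{-1}+\mathcal{C}_0+O(r)$. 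Its solution is $\varepsilon=r-r^2\log\frac{2}{r}+\mathcal{C}_0r^2+o(r^2)$, that is, $\delta(r)=\frac{1+r}{2}-\frac{r^2}{2}\log\frac{1}{r}+O(r^2)$. Hence the $O(r^2)$ clause is false as stated, and no refinement of your step can prove it. What the method does prove is the error term $O\bigl(r^2\log\frac{1}{r}\bigr)$, once you make the bound on $a_n-1$ explicit rather than only asserting $a_n\to1$. The rest of your plan coincides with the paper's. That includes the fact that the lower bound $\lambda_0(X_r)\gtrsim L_r$ is imported rather than proved; your collar test function gives only the upper bound.
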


\begin{proof}

The proof will be divided into three parts.

First, we  consider the case $\delta(r) \longrightarrow \frac{1}{2}$ as $r\to 0$  to ensure the continuity of $\delta(r)$ at $0.$ The method we adopted  here is from McMullen (\cite{Mc2}, Theorem 3.6 ).  It can be see that for sufficiently small $r$ , the limit set $\Lambda_r$ is a Cantor set.
After removing the infinity point $\infty$  and its countable orbit under the group action, the limit set can be decomposed into a sequence of connected                                                   branches . Let $\Lambda_r(0)$ denote the basic branch containing the origin, and let $\Lambda_r(n)$ denote the branch concentrated near $2n,$ where $n\in\mathbb Z.$ Then up to a countable set with measure zero, there is a self-similar decomposition $$\Lambda_r(0)
=
\rho_r\left(
\bigcup_{n\ne0}\Lambda_r(n)
\right),$$ where $\rho_r$ is the reflection about the circle $C_r.$

Let $\mu_r$ be a $\delta(r)$ dimension critical conformal measure of $\Gamma_r.$ Since the derivatives of the translations $T_{2n}$ are modulus $1$ in the Euclidean metric,  all the translated branches have the same measure $\mu_r(\Lambda_r(n))
=
\mu_r(\Lambda_r(0)).$
By conformal invariant of the measures $\mu_{r}$, we obtain $$\mu_r(\Lambda_r(0))
=
\sum_{n\ne0}
\int_{\Lambda_r(n)}
|D\rho_r(z)|^{\delta(r)}
\,d\mu_r(z).$$
For sufficiently large $|n|$ and $z\in\Lambda_r(n)$ , the branch $\Lambda_r(n)$ is concentrated near $2n.$ Hence  $|z|\asymp2|n|.$  Furthermore notice that $|D\rho_r(z)|=\displaystyle \frac{r^{2}}{|z|^{2}}$, we have $$|D\rho_r(z)|^\delta
=
\left(\frac r{2|n|}\right)^{2\delta}
\bigl(1+\text{distortion term}\bigr).$$
By the symmetry between $n$ and $-n,$ we obtain the critical normalization equation $$1=
2\sum_{n=1}^{\infty}
\left(\frac r{2n}\right)^{2\delta(r)}
a_n(\delta(r)),$$ where $a_n(\delta(r))\longrightarrow1
,$ as $n\to\infty.$

If we temporarily ignore the branch distortion and set $a_n(\delta,r)\sim 1,$ we obtain the model equation
\begin{align} 1\sim
2\sum_{n=1}^{\infty}
\left(\frac r{2n}\right)^{2\delta}.
\end{align}

Let us consider the Riemann zeta function $$\zeta(s)
=
\sum_{n=1}^{\infty}\frac1{n^s},
\qquad
\operatorname{Re}s>1.$$ Then (5.2) becomes $1
\sim
2\left(\frac r2\right)^{2\delta}
\zeta(2\delta),$ or equivalently
\begin{align}2\zeta(2\delta)
\sim
\left(\frac r2\right)^{-2\delta}.
\end{align}

Now let  $r\to 0,$ note that  the Riemann zeta function has a unique pole at $s=1,$ it follows that $2\delta(r)\longrightarrow1, $ i.e. $\delta(r)\longrightarrow\frac{1}{2} $ as $r\to 0.$

Note that (5.2) is a asymptotic estimation formula, in the following  we will give a more precise estimation to prove that $\delta(r) = \frac{1+r}{2} + O(r^2) \quad \text{as } r \to 0,$ which is a result of McMullen.

Set $\delta=\frac12+\varepsilon,
$ $ \varepsilon>0.$ For $s$ near $1$, the Riemann zeta function has the Laurent expansion
$$\zeta(s)
=
\frac1{s-1}
+\mathcal{C }_{0}
+O(s-1),$$ where $\mathcal{C }_{0} $ is the Euler-Mascheroni constant defined as
$$\mathcal{C }_{0}
=
\lim_{N\to\infty}
\left(
\sum_{n=1}^{N}\frac1n-\log N
\right).$$
Thus \begin{align}\zeta(1+2\varepsilon)
=
\frac1{2\varepsilon}
+\mathcal{C }_{0}
+O(\varepsilon).
\end{align}
Substituting (5.4) into the model equation (5.3), we can have

\begin{align}\frac1{\varepsilon}
+2\mathcal{C }_{0}
+O(\varepsilon)
=
\frac2r
\exp\left(
2\varepsilon\log\frac2r
\right)= \frac{2}{r}\Bigl(1 - 2\varepsilon\ln(r/2) + O(\varepsilon)\Bigr).
\end{align}
Let $\varepsilon = cr + o(r),$ it follows from (5.5) that
$$\frac{1}{cr} + 2\mathcal{C }_{0}  + O(r) = \frac{2}{r}\Bigl(1 - 2cr\ln(r/2) + O(r)\Bigr).$$
Multiplying both sides by $r$, we have
$$\frac{1}{c} + 2\mathcal{C }_{0}  r + O(r) = 2 - 4cr\ln(r/2) + O(r).$$
Note that $r\log\frac1r\longrightarrow0$ as $r\to 0,$ the dominant terms in (5.5) imply $\varepsilon\asymp r,$ which implies $c = 1/2,$ and thus $\varepsilon = \frac{r}{2} + O(r).$
Furthermore, this gives $$\delta(r)
=\frac12+\frac r2+O(r).$$

If we reconsider the error produced by the factor $\exp\left(
2\varepsilon\log\frac2r
\right),$ then the model equation gives $$\delta(r)
=
\frac{1+r}{2}
+
O\left(r^2\log\frac1r\right).$$
However, the complete self-similar equation also contains the  distortion factors $a_n(\delta(r)).$ Therefore the rough model  (5.2) alone does not directly imply an error term $O(r^2)$.
A more precise estimate of the leading eigenvalue of the complete transfer operator is given by McMullen’s eigenvalue method (Theorem 3.6 \cite{Mc2}) which implies  $$\delta(r)
=\dim_{H}(\Lambda_r)
=
\frac{1+r}{2}+O(r^2),
\qquad r\to0.$$
Consequently, after defining $\delta(0)=\frac12,$ the function $\delta(r)$ is continuous at $r=0.$

Second,  we will prove  that $\delta(r) \to 1 \quad \text{as } r \to 1.$

For $r=1,$ the circle $C_1$ is tangent to  $\ell_{+}$ and $\ell_{-}.$ Hence the limit set is $\Lambda_1=\widehat{\mathbb R}$ and $\delta(1)
=
\dim_{H}(\Lambda_1)
=
1.$

Let $r\to 1.$ Since the circles $C_{r}$ converge to $C_1,$ the reflections $\rho_{r}$ in these circles converge to $\rho_1.$  Note that the reflections about the two vertical lines   $\ell_{+}$ and $\ell_{-}$ are  independent of $r.$ Hence the generators converge, and therefore $\Gamma_{r}\longrightarrow\Gamma_1$ algebraically, for the definition of algebraical convergence see \cite{Mc1}.


For $r<1,$ the circle $C_r$ is disjoint from the line $\ell_{+}.$ Hence the product of the reflections in these two geodesics is a hyperbolic element. Let $d_r=d_{\mathbb H^2}(C_r,\ell_{+}),$ denote the hyperbolic distance between the two geodesics. The translation length of the product of the two reflections is $2d_r.$ Consequently, its multiplier is $\lambda_r=e^{2d_r},$ or its reciprocal, depending on the chosen of fixed points.

As $r\to1,$ the circle $C_r$ becomes tangent to $\ell_{+},$ and therefore $d_r\longrightarrow0.$ It follows that $\lambda_r\longrightarrow1.$
Moreover, $\lambda_r$ approaches $1$ along the positive real axis, which is precisely the condition for radial convergence of the accidental parabolic element.

By symmetry, the same argument applies to the geodesic pair $(C_r,\ell_{-}).$ Hence he limiting group arise from the geodesic pairs $(C_1,\ell_{+}), (C_1,\ell_{-})$ contains two accidental parabolic elements.

In \cite{Mc2}, McMullen showed that
if $\Gamma_n\to\Gamma$ algebraically and  all accidental parabolics converge radially, then $\Gamma_n\to\Gamma$ dynamically.
In the present family, all accidental parabolic elements arise from the two tangent pairs described above, and we have verified that they converge radially. Therefore, $\Gamma_{r}\longrightarrow\Gamma_1$ dynamically, which implies $\dim_{H}(\Lambda_{r_j})
\longrightarrow
\dim_{H}(\Lambda_1) \:\:\text{as}\:\: r_{j} \to 1.$
We conclude that $\delta(r)\longrightarrow1,
$ as $ r\to1.$ Thus, after defining $\delta(1)=1,$ the function $\delta(r)$ is continuous at $r=1.$

In order to prove the analytic property, we need a more precise quantitative estimate as $r\to1.$ In the following, we will provide a quantitative estimate for $1 - \delta(r) \sim \lambda_0(X_r) \asymp \sqrt{1 - r} $ as $ \to 1.$
To do so,  we divide the quotient surface into three parts, separating the convex core from the ends of infinite volume.

Let $X_r$ denote the orientate double cover of the quotient surface $\mathbb H/\Gamma_r.$ Consider the convex core of the surface $X_r.$ For $0<r<1,$ it is easy to see that $X_r$ has a geometrically finite hyperbolic pair of pants with one rank-one cusp and two ends of infinite volume. Its convex core, denoted by $\operatorname{Core}(X_r)$,  has finite area. The boundary of $\operatorname{Core}(X_r)$ consists of two simple closed geodesics of equal length, denoted their common length by $L_r.$ Furthermore, the two  geodesics divide $X_r$ into three parts:
$$X_r=
\operatorname{Core}(X_r)
\cup E_r^+
\cup E_r^-,$$ where $E_r^+$ and $E_r^-$ are the two ends of infinite volume.
As $r\to1,$ we can see   $L_r\longrightarrow0$  and the two boundary geodesics degenerate into the two rank-one cusps of the limiting surface.

Denote the bottom of the spectrum of the surface $X_{r}$ by $\lambda_0(X_r).$		The variational characterization of the bottom of the spectrum is
\begin{align}\lambda_0(X_r)
=
\inf_{0\ne f\in C_c^\infty(X_r)}
\frac{\displaystyle\int_{X_r}|\nabla f|^2\,dA}
{\displaystyle\int_{X_r}|f|^2\,dA}.
\end{align}
The bottom of the spectrum of the Laplacian is controlled by the short geodesics. In the following, we will use the standard short-geodesic estimate to give $\lambda_0(X_r)\asymp L_r.$

For the asymptotic estimates at $r\to 1,$ the method we used here is from \cite{Mc2} and \cite{Mc3}.

By McMullen \cite{Mc1} , we know that $\Gamma_r\to\Gamma_1$ converges strongly as $r\to 1.$ Hence the two infinite-volume ends degenerate into two rank-1 cusps. At this point, two accidental parabolic points are generated. These short geodesics control the least eigenvalue of the Laplacian. We have
\begin{align}\lambda_0(X_r)
=
\inf_{0\ne f\in C_c^\infty(X_r)}
\frac{\displaystyle\int_{X_r}|\nabla f|^2\,dA}
{\displaystyle\int_{X_r}|f|^2\,dA}.
\end{align}
since the quotient above is approximately minimized by a function with $f=1$ in $\operatorname{Core}(X_r)$, and $f=0$ in the infinite volume ends, and $\nabla f$ supported in standard collar neighborhoods of the short geodesics, see \cite{JT}.

In the following, we  explain that the areas of standard collar neighborhoods of the short geodesics are controlled by the length of the core curves.
Let $\eta  $ be a simple closed geodesic of length $l.$ The half-width $w(\eta )$ of the standard embedded collar satisfies $$\sinh w(\eta )\,\sinh\frac l2=1.$$
Using Fermi coordinates $(\rho,t),$ the hyperbolic metric in the collar is $d\rho^2+\cosh^2\rho\,dt^2,$ where $-w(\eta )\le\rho\le w(\eta  )$ for $
0\le t\le l.$
Hence the area of the entire collar is
$$\operatorname{Area}(\operatorname{Coll}(\eta  ))
=\int_0^l
\int_{-w(\eta )}^{w(\eta )}
\cosh\rho\,d\rho\,dt.$$
Computing the  integral gives $$\operatorname{Area}(\operatorname{Coll}(\eta ))
= 2l\sinh w(\eta ).$$
Note that $\sinh w(\eta )
=
\frac1{\sinh(l/2)},$ we obtain $$\operatorname{Area}(\operatorname{Coll}(\eta ))
=
\frac{2l}{\sinh(l/2)}.$$
Let $l$ tend $0,$ we have $\operatorname{Area}(\operatorname{Coll}(\eta ))
\longrightarrow 4.$
Thus the area of the collar remains bounded. This allows us to compare the numerator with $L_r.$ In the following we will give the estimation of the asymptotic behavior of the length $L_r$ as $r$ approaching 1.

By symmetry, it suffices to compute the distance between $C_r=\{z\in\mathbb H^2:|z|=r\}$ and the vertical geodesic $\ell_{+}
=\{z\in\mathbb H^2:\operatorname{Re}z=1\}.$

The two boundary endpoints of $C_r$ in $\mathbb{R}$ are $-r$ and $r,$ while the two boundary endpoints of $\ell_{+}$ are $1$ and $\infty.$ By the cross-ratio formula for the distance between two disjoint geodesics in the upper half-plane model, we have $$\cosh d_{\mathbb H^2}(C_r,\ell_{+})
=
\frac{1+r}{2r}.$$
Therefore, $$d_{\mathbb H^2}(C_r,\ell_{+})
=
\operatorname{arcosh}
\left(
\frac{1+r}{2r}
\right).$$
Note that the length of the closed geodesic corresponding to the product of the two reflections is twice the length of the common perpendicular. Hence
\begin{align}L_r
=
2d_{\mathbb H^2}(C_r,\ell_{+})
=
2\operatorname{arcosh}
\left(
\frac{1+r}{2r}
\right).
\end{align}
Observe that $\frac{1+r}{2r}
=1+\frac{1-r}{2r}.$ Set $x=\frac{1-r}{2r}.$ Then by some easy calculations we have
\begin{align}d_{\mathbb H^2}(C_r,\ell_{+})
\asymp
\sqrt{\frac{1-r}{r}}.
\end{align}
Combining (5.8) and (5.9), we have $$L_r
\asymp
\sqrt{1-r},
\qquad r\to1.$$


Since $\Gamma_r$ is a non-elementary geometrically finite Fuchsian group with  parabolic elements, its critical exponent satisfies $\delta(r)>\frac{1}{2}.$
By Sullivan's spectral formula and together with the related results of Bishop–Jones and Sullivan \cite{BJ}\cite{DS3}\cite{DS4}, we have $$\lambda_0(X_r)
=
\delta(r)\bigl(1-\delta(r)\bigr).$$

In general, for $M=\mathbb H^{d+1}/\Gamma,$ the bottom of the spectrum satisfies $$\lambda_0(M)
=
\begin{cases}
\dfrac{d^2}{4},
& \delta(\Gamma)\le\dfrac d2,\\[6pt]
\delta(\Gamma)\bigl(d-\delta(\Gamma)\bigr),
& \delta(\Gamma)\ge\dfrac d2.
\end{cases}$$
Hence for the case of Fuchsian groups ($d=1,$ )
we have $$\lambda_0(X_r)
=
\delta(r)\bigl(1-\delta(r)\bigr)
\sim
1-\delta(r).$$
Thus, $1-\delta(r)
\sim
\lambda_0(X_r).$ On the other hand, $\lambda_0(X_r)\asymp L_r$ and $L_r\asymp\sqrt{1-r}.$
%
Combining these estimates gives $$1-\delta(r)
\sim
\lambda_0(X_r)
\asymp
L_r
\asymp
\sqrt{1-r},
\qquad r\to1.$$
%
%

Now its time to give the proof of the real analyticity of the Hausdorff dimensions of the limit set $\Lambda(\Gamma_{r})$ for $ r \in (0, 1).$
We firstly construct the Ruelle transfer operator $(\mathcal L_{s,r}u)(x)$ for this group using the group elements.

Fix $r\in(0,1)$. After removing the parabolic fixed points in the limit set and their countable orbits under the group action, the basic branch, denoted by $\Lambda_r(0)$, admits a countable Markov partition. Denote by $\Sigma
=
(\mathbb Z\setminus\{0\})^{\mathbb N}$ the symbolic space
with the left shift map
$$\sigma(n_1,n_2,n_3,\ldots)
=
(n_2,n_3,\ldots).$$
For each $n\ne0$, define the inverse branch by
$$f_{n,r}
=
\rho_r\circ T_{2n},
\qquad
T_{2n}(z)=z+2n.$$
For a finite word
$$\omega=(n_1,\ldots,n_k)
\in(\mathbb Z\setminus\{0\})^k,$$
let
$$f_{\omega,r}
=
f_{n_1,r}\circ\cdots\circ f_{n_k,r}.$$
Note that
$|DT_{2n}|=1$
and
$|D\rho_r(z)|
=
\frac{r^2}{|z|^2},$
we have
$$|Df_{n,r}(z)|
=
\frac{r^2}{|z+2n|^2}$$
and
$$|Df_{\omega,r}(z)|
=
\prod_{j=1}^{k}
\left|
Df_{n_j,r}
\left(
f_{n_{j+1},r}\circ\cdots\circ f_{n_k,r}(z)
\right)
\right|.$$
Thus for sufficiently large $|n|$,
$$|Df_{n,r}(z)|
\asymp
\left(\frac r{2|n|}\right)^2.$$
Therefore, for every $s>\frac{1}{2}$,
$$\sum_{n\ne0}
\sup_z |Df_{n,r}(z)|^s
<\infty.$$
For $s>\frac12$, define the Ruelle transfer operator by
$$(\mathcal L_{s,r}u)(x)
=
\sum_{n\ne0}
|Df_{n,r}(x)|^s
u(f_{n,r}(x)).$$
%
Since this system has countably many branches, we use the transfer-operator theory for countable Markov systems rather than the ordinary matrix theory for finite-branch systems. Let $\mathcal L_{s,r}$ act on a suitable space of analytic functions on an appropriate complex neighborhood.
According to Theorem 1.2, the operator family
$$(s,r)\longmapsto\mathcal L_{s,r}$$
is real-analytic in $(s,r)$, and its maximal eigenvalue is a positive, isolated simple eigenvalue. Denote this maximal eigenvalue by $\lambda(s,r).$
Hence there exist a strictly positive eigen-function $h_{s,r}$ and a positive eigen-measure $\nu_{s,r}$ normalized by
$$\int h_{s,r}\,d\nu_{s,r}=1$$ such that
$$\mathcal L_{s,r}h_{s,r}
=
\lambda(s,r)h_{s,r}$$
and
$$\mathcal L_{s,r}^{*}\nu_{s,r}
=
\lambda(s,r)\nu_{s,r}.$$
Define the equilibrium measure
$$\mu_{s,r}
=
h_{s,r}\nu_{s,r}.$$
For a word of length $k$, define the partition function $Z_k(s,r;x)$ as
$$Z_k(s,r;x)
=
\sum_{|\omega|=k}
|Df_{\omega,r}(x)|^s.$$
By the iteration formula for the transfer operator, we have
$$\mathcal L_{s,r}^{\,k}\mathbf1(x)
=
Z_k(s,r;x).$$
According to the Ruelle-Perron-Frobenius theorem,
$$Z_k(s,r;x)
=
\lambda(s,r)^k
\bigl(h_{s,r}(x)+o(1)\bigr),$$
where the error estimate is understood in the sense of exponential growth.
Similar calculations can be referenced in Rocha's \cite{R}, Anderson and Rocha's \cite{AW}, and Ruelle's \cite{Ru}.
Therefore, the topological pressure is defined by
$$P(s,r)
=
\lim_{k\to\infty}
\frac1k\log Z_k(s,r;x),$$
and this limit is independent of the initial point $x$.

By Gelfand’s formula for the spectral radius, we have
$$\lambda(s,r)
=
\lim_{k\to\infty}
\|\mathcal L_{s,r}^{\,k}\|^{1/k},$$
where the operator norm is taken in the space of continuous functions. More generally, the spectral radius can be expressed as:
$$\lambda(\delta,r) = \lim_{k\to\infty} \left( \sum_{n_1,\dots,n_k\neq 0} \prod_{i=0}^{k} |f_{n_i}'(x_{i-1})|^{\delta} \right)^{1/k},$$
where $x_{i-1}=f_{n_1}\cdots f_{n_{i-1}}(x_0)$ and the limit is independent of the initial point $x_0.$
Furthermore,  there exists a positive eigenfunction $h$ such that $\mathcal{L}_{\delta,r}h = \lambda(\delta,r)h.$ For the special case of the Hecke groups, we can derive the expression for the spectral radius as
$$\lambda(\delta,r) = 2\sum_{n=1}^{\infty} \left(\frac{r}{2n}\right)^{2\delta} a_n(\delta,r).$$
and hence
$$P(s,r)=\log\lambda(s,r).$$

In \cite{MU},  Mauldin and Urbanski  showed that for a regular system, the dimension of the limit set is the unique zero of the function $r \mapsto P(\delta(r),r)$, i.e. the Hausdorff dimension is the unique zero of the pressure function with respect to $s.$ Hence we only need to take the unique zero point of the equation $\lambda(\delta(r), r) = 1.$

%
Applying the implicit function theorem to $P(\delta(r),r),$  we have  the existence of a unique zero point of the equation $\lambda(\delta(r), r) = 1.$

%

By Lemma 4.3 of this article, since the group $\Gamma _{r} $ contains parabolic elements, hence the regular index is $\theta =1/2.$ We can directly use Lemma 3.2 in \cite{MU} to conclude that $P(s,r)$ is strictly decreasing in the variable $s$ on the interval $\left ( 1/2,+  \infty  \right ) .$
Moreover, by Theorem 1.2 we can conclude that $P(\delta,r)$ is real-analytic in the region $(0,1)\times(1/2,1).$ The real-analytic implicit function theorem implies that there exists a neighborhood $U_{r_0}$ of $r_0$ and a unique real-analytic function
$$\widetilde\delta:U_{r_0}\longrightarrow\mathbb R$$
such that
$$P(\widetilde\delta(r),r)=0,
\qquad
\widetilde\delta(r_0)=s_0.$$

On the other hand, since $P(s,r)$ is strictly decreasing in $s$, the equation
$P(s,r)=0$
has at most one zero for each fixed $r$. Hence,
$\widetilde\delta(r)=\delta(r)$
on $U_{r_0}$. Since $r_0\in(0,1)$ was arbitrary, we conclude that
$r\longmapsto H.\!\dim(\Lambda_r)$
is real-analytic on $(0,1)$.
\end{proof}

From \cite{NP} and \cite{Su3}, it follows as a corollary of the Mostow rigidity theorem that the limit set of a geometrically finite Kleinian group supports a unique non-atomic $\Gamma$-invariant density $\mu$ of total mass one. The dimension of the canonical density $\mu$ agrees with the Hausdorff dimension of the limit set. 

\end{document}